\theoremstyle{plain}
\newtheorem{thm}{\protect\theoremname}
\newtheorem{thmletter}{Main Theorem}
\theoremstyle{plain}
\newtheorem{prop}[thm]{\protect\propositionname}
\theoremstyle{plain}
\newtheorem{cor}[thm]{\protect\corollaryname}
\theoremstyle{plain}
\newtheorem{lem}[thm]{\protect\lemmaname}
\theoremstyle{definition}
\theoremstyle{definition}
\newtheorem{defn}[thm]{\protect\definitionname}
\theoremstyle{definition}
\newtheorem{notation}[thm]{\protect\notationname}
\def\ps@pprintTitle{%
  \let\@oddhead\@empty
  \let\@evenhead\@empty
  \let\@oddfoot\@empty
  \let\@evenfoot\@oddfoot
}
  \providecommand{\corollaryname}{Corollary}
  \providecommand{\examplename}{Example}
  \providecommand{\lemmaname}{Lemma}
  \providecommand{\propositionname}{Proposition}
  \providecommand{\theoremname}{Theorem}
  \providecommand{\definitionname}{Definition}
  \providecommand{\notationname}{Notation}
\newcommand{\mult}{\operatorname{mult}}
\newcommand{\Kos}{\operatorname{Kos}}
\newcommand{\gr}{\operatorname{gr}}
\newcommand{\lev}{\operatorname{lev}}
\newcommand{\Ann}{\operatorname{Ann}}
\newcommand{\Spec}{\operatorname{Spec}}
\newcommand{\mh}{\operatorname{mh}} 
\newcommand{\im}{\operatorname{im}}
\newcommand{\id}{\operatorname{id}}
\newcommand{\Germs}{\mathbf{Germs}}
\newcommand{\Sets}{\mathbf{Sets}}
\newcommand{\Def}{\mathbf{Def}}
\newcommand{\RNum}[1]{\uppercase\expandafter{\romannumeral #1\relax}}
\begin{document}

	\title{On the $k$-th Tjurina number of weighted homogeneous singularities}
	\author{Chuangqiang Hu}
	\address{Sun Yat-Sen University, School of Mathematical, Guangzhou, 510275, P. R. China} 
	\email{huchq@mail2.sysu.edu.cn}
	\thanks{Hu was supported by NSFC Grant 12441107.}
	
	\author{Stephen S.-T.~Yau}
	\address{Beijing Institute of Mathematical Sciences and Applications,  Beijing 101408, P. R. China} 
	\email{yau@uic.edu}
	\thanks{Corresponding author: Stephen~S.-T. Yau\\
	Yau was supported by Tsinghua University start-up fund and Tsinghua University Education Foundation fund (042202008).}
	
	\author{Huaiqing~Zuo}
	\address{Department of Mathematical Sciences, Tsinghua University, Beijing 100084, P. R. China}
	\email{hqzuo@mail.tsinghua.edu.cn}
	\thanks{Zuo was supported by NSFC Grant 12271280.}

	\keywords{Deformation Functor, Singularity, Koszul Complex}
\subjclass[2020]{Primary 14B05 Secondary 32S05}

	\begin{abstract}
		Let $ (X,0) $ denote an isolated singularity defined by a weighted homogeneous polynomial $ f $. Let $ \mathcal{O}$ be the local algebra of holomorphic function germs at the origin, with the maximal ideal $m $. We study the $k$-th Tjurina algebra, defined by $ A_k(f): = \mathcal{O} / \left( f , m^k J(f) \right) $, where $J(f)$ denotes the Jacobian ideal of $ f $. The zeroth Tjurina algebra is well known to represent the tangent space of the base space of the semi-universal deformation of $(X, 0)$. Motivated by this observation, we explore the deformation of $(X,0)$ with respect to a fixed $k$-residue point. We show that the tangent space of the corresponding deformation functor is a subspace of the $k$-th Tjurina algebra.
		Explicit calculation of the $k$-th Tjurina numbers, which correspond to the dimensions of the $k$-th Tjurina algebras, plays a crucial role in understanding these deformations. According to the results of Milnor and Orlik, the zeroth Tjurina number can be expressed explicitly in terms of the weights of the variables in $f$. However, we observe that for values of $k$ exceeding the multiplicity of $X$, the $k$-th Tjurina number becomes more intricate and is not solely determined by the weights of the variables.
		In this paper, we introduce a novel complex derived from the classical Koszul complex and obtain a computable formula for the $k$-th Tjurina numbers for all $ k \geqslant 0 $. As an application, we calculate the $k$-th Tjurina numbers for all weighted homogeneous singularities in three variables.
\end{abstract}

\maketitle

\tableofcontents
\section{Introduction}
Let $ (\mathbb{C}^n,0) $ denote a germ of an $n$-dimensional complex space located at the origin.  We are interested in studying a hypersurface singularity, which is defined by a complex analytic function $f = f(x_1,\ldots,x_n)$ having an
isolated critical point at the origin. The locus $V(f)$ is defined as the set of points $(x_1,\ldots, x_n) \in \mathbb{C}^n$ satisfying $f(x_1,\ldots, x_n) = 0$. 
To analyze the algebraic property of these singularities, we introduce some mathematical concepts. Let $\mathcal{O} = \mathbb{C}[[x_1,\ldots,x_n]]$ be the formal power series ring in variables $x_1,\ldots , x_n$. For $i = 1, \ldots, n $, we denote by $f_i = \partial f / \partial x_i $ the partial derivatives of $f$. The Jacobian ideal $ J(f)$ of $ V(f)$ is generated by $f_1,\ldots,f_n \in \mathcal{O}$.  We assume that $V(f)$ has an isolated singularity at the origin.

The moduli algebra associated to $V(f) $ is defined as a $\mathbb{C}$-algebra: 
\[
	A(f) := \mathcal{O} / (f ,J(f)),
\]
while the Milnor algebra associated to $V(f) $ is given by 
\[
	M(f) := \mathcal{O} / (J(f)). 
\]
It is well known
that the algebra $A(V)$ is finite-dimensional if and only if the germ
$V(f)$ has an isolated singularity (see e.g. \cite{greuel2007introduction}).
By studying the moduli algebra $A(f)$ we can gain a deep understanding of the hypersurface singularity $V(f)$ and its local behavior. The result of Mather and Yau \cite{mather1982classification} states that the biholomorphic equivalence class of an isolated hypersurface singularity is determined by its moduli algebra. 

In the realm of singularity theory, the dimension of the moduli algebra $A(f)$, symbolized as $\tau_0$,is an important invariant known as the Tjurina number. This quantity serves as a quantitative measure of the singularity's complexity, offering valuable insights into its local geometric and topological characteristics. Notably, the infinitesimal deformation of $(V(f), 0)$ is unobstructed. Consequently, the associated semi-universal space of $(V(f), 0)$ exists and is formally smooth. Intriguingly, the tangent space of this semi-universal space coincides with the moduli algebra $A(f)$. Therefore, $\tau_0$ can be precisely interpreted as the dimension of the tangent space of the semi-universal space, providing a crucial link between algebraic and geometric aspects of the singularity.

Similarly, the dimension $\mu_0$ of the Milnor algebra, called the Milnor number, plays a central role in singularity theory. It provides indispensable information regarding the topological structure and classification of singularities. As established by Milnor in \cite{milnor2016singular}, the link of $V(f)$ has the homotopy type of a bouquet of spheres, and remarkably, the number of spheres in this bouquet is equal to the Milnor number of $V(f)$. This relationship enhances our understanding of the singularity's topology and provides a powerful tool for its classification and further analysis. The Milnor number, together with the Tjurina number, forms the backbone of many investigations in singularity theory, enabling researchers to dissect the complex behavior of singularities from multiple perspectives.

In the extensive landscape of isolated hypersurface singularities, weighted homogeneous singularities have consistently attracted significant attention from researchers. These singularities possess unique properties that make them a fascinating subject of study within the realm of singularity theory. 
Recall that a polynomial $f ( x_1 , \cdots, x_n )$ is weighted homogeneous of a specific type 
$(w_1 , w_2 , \cdots, w_n )$, where $ w_1 , w_2 , \cdots, w_n $ are fixed positive rational numbers, if it can be expressed as a linear combination of monomials $x_1^{i_1} x_2^{i_2} \cdots x_n^{i_n} $
such that 
\[ i_1 w_1 + i_2 w_2 + \cdots + i_n w_n = W 
\] for some constant $W$.
A natural
question is when $V(f) $ is defined by a weighted homogeneous polynomial up to biholomorphic change of coordinates. 
Saito \cite{saito1971quasihomogene} solved this question. According
to Saito's theorem, $ V(f) $ is equivalent to a weighted homogeneous singularity after a biholomorphic change of coordinates if and only if the Milnor number coincides with the Tjurina number. 

The Milnor number exhibits a significant connection with the geometric genus $p_g$, especially in the context where the polynomial $f$ is weighted homogeneous. This relationship has been the subject of much investigation and speculation within the realm of singularity theory.

In 1978, Durfee proposed an interesting conjecture that  \[ \mu_0 \geqslant (n + 1)! \cdot p_g . \]  This conjecture seeks to establish a fundamental relationship between these two important invariants, shedding light on the  geometric and algebraic structure of weighted homogeneous singularities.

Subsequently,  Xu and Yau \cite{xu1993durfee} made significant progress in this regard for the two-dimensional case of weighted homogeneous singularities. They  proved that the inequality
\[  \mu_0- m_0 + 1 \geqslant 6 \cdot p_g \]
is valid, where $m_0$ represents the multiplicity of the polynomial $f$. This result provided a more refined understanding of the connection among the Milnor number, the multiplicity, and the geometric genus in the two-dimensional setting, offering valuable insights into the specific characteristics of these singularities in this dimension.

For the three-dimensional weighted-homogeneous case, as demonstrated in \cite{lin2004classification}, another inequality has been established:
\[
\mu_0-(2 \cdot m_0^3-5 \cdot m_0 ^2+2 \cdot m_0 +1) \geqslant 4! \cdot p_g. 
\]
This inequality  extends our knowledge of the relationship between these invariants in a higher-dimensional context. It showcases how the Milnor number, adjusted by a specific expression involving the multiplicity $m_0$, relates to the geometric genus in the three-dimensional scenario. Overall, these results contribute to a more comprehensive understanding of the intricate interplay between these crucial invariants in weighted homogeneous singularities across different dimensions, helping researchers to better analyze and classify such singularities based on their algebraic and geometric properties. 

In the literature, the derivation Lie algebra of $A(f)$ is called the Yau algebra, and has been the extensively studied. This algebra is of interest due to its interesting properties and its relevance within the context of singularity theory. It is shown in \cite{seeley1990variation} that Yau algebra is finite dimensional and its connection to moduli spaces of singularities is quite substantial. Since the 1980s, Yau and his collaborators have embarked on a systematic study of the Yau algebra and its generalizations. Their work, including \cite{yau1983continuous}, \cite{seeley1990variation}, \cite{yau1991solvability}, and others, has greatly contributed  to our understanding of this algebra.

In the present paper, our focus will be placed on the generalized version of the moduli algebra, constructed via the $m$-filtration of the Jacobian ideal $J(f)$, where $m$ represents the maximal ideal of $\mathcal{O}$. Following \cite{GreuelPham2017, greuel2007introduction}, the $k$-th Tjurina algebra and the $k$-th Milnor algebra of the isolated hypersurface singularity  
$V(f)$ are defined respectively as 
\[
	A_{k}(f) :=  \mathcal{O} / \left( f, m^k J(f) \right),
\]
and
\[
	M_{k}(f) := \mathcal{O} / m^k J(f).
\]
The dimensions of these algebras are fundamental invariants of the singularity $V(f)$.  Specifically, we call $\tau_k = \tau_k(V(f))$  the $k$-th Tjurina number, and $\mu_k = \mu_k(V(f))$  the $k$-th Milnor number, adopting the terminology of \cite{hussain2023k}.


It is worth noting that the analog Mather-Yau theorem concerning the $k$-th Tjurina algebra can be located in different references depending on the field under consideration. For the complex number field, one can refer to \cite{greuel2007introduction}, while for fields of positive characteristic, relevant information can be found in \cite{GreuelPham2017}. This theorem plays an important role in establishing connections and understanding the properties related to these generalized algebraic structures in different algebraic settings, further enriching our exploration of the isolated hypersurface singularities and their associated algebraic invariants.

Motivated by the well-known classical deformations of $(V(f),0)$, an interesting question arises as to whether the algebras $A_k(f)$ bear any relationship with the deformations of $(V(f),0)$. To explore this possibility, we introduce a specific deformation functor, denoted as
\begin{equation*}
	\Def_{k}^{V(f)}:  \; \Germs  \to \Sets,
\end{equation*}
which plays a crucial role in our investigation. This functor maps each germ $\mathcal{T}$ to the collection of equivalence classes of $k$-pointed deformations over $\mathcal{T}$.

To clarify what a $k$-pointed deformation entails, it can be roughly described as a chain of successive morphisms in the form $\mathcal{P} \to \mathcal{X} \to \mathcal{T}$. There are specific requirements for these morphisms: $\mathcal{X} \to \mathcal{T}$ must be a classical deformation of $(V(f),0)$, while $\mathcal{P} \to \mathcal{T}$ is required to be a trivial deformation of the fat point $\Spec \mathcal{O}/(f, m^k)$. Through this construction, we are able to study the connection between the algebraic structure of $A_k(f)$ and the deformations in a more systematic way. It turns out that there is a significant relationship between the tangent space of the deformation functor $\Def_{k}^{V(f)}$ and $A_k(f)$. Specifically, the tangent space of $\Def_{k}^{V(f)}$ is dominated by $A_k(f)$. This implies that the $k$-th Tjurina numbers are deeply intertwined with the local structure of $(V(f),0)$.

The following theorem further elaborates on this relationship:
\begin{thmletter}
	Suppose that $ (V(f),0) \subseteq (\mathbb{C}^n ,0) $ represents an isolated hypersurface singularity.  In this context, the tangent space $T \Def_{k}^{V(f)} $ of the deformation functor $\Def_{k}^{V(f)}$ is isomorphic to 
	$	  {(f,m^k) }/ { \left( f ,m^k J(f)  \right) }.
	$
	Therefore, we have the equality
	 \[ \dim T \Def_{k}^{V(f)} = \tau_k - \dim {\mathcal{O}}/{(f, m^k)} . \] 
\end{thmletter}
For the particular cases where  $k =0 $ or $k =1$, this result is already well established within the existing literature. Specifically, one can refer to Lemma 2.5 in \cite{hirsch2006deformations} for a detailed treatment of these cases. 

Subsequently, our attention shifts towards the computation of the $k$-th Milnor (and Tjurina) numbers. The Milnor-Orlik theorem, as presented in \cite{Milnor1970}, played a significant role in formulating the Milnor number for weighted homogeneous singularities by leveraging the weights of the polynomial $f$.
\begin{thm}[Milnor-Orlik]
	Let $f (x_1, \cdots, x_n )$ be a weighted homogeneous polynomial of type $(w_1,\ldots, w_n)$ along with a total weight $W$, and under the assumption that $V(f)$ represents an isolated singularity at the origin. In this context, the Milnor (and equivalently, the Tjurina) number is given by the expression
\[
	\tau_0 = \mu_0 = \prod_{i=1}^{n} \left(\frac{W}{w_i}- 1\right).
\]
\end{thm}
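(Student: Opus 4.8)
The plan is to split the statement into the coincidence $\tau_0=\mu_0$ and the product formula for $\mu_0$, and to reduce the latter to a Hilbert-series computation in a graded polynomial ring.

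First I would establish $\tau_0=\mu_0$ via the Euler relation. Differentiating the defining identity $f(\lambda^{w_1}x_1,\ldots,\lambda^{w_n}x_n)=\lambda^{W}f(x_1,\ldots,x_n)$ in $\lambda$ and setting $\lambda=1$ gives $\sum_{i=1}^{n}w_i x_i f_i = W f$. Since $W\neq 0$, this shows $f\in J(f)$, so $(f,J(f))=(J(f))$ and hence $A(f)=M(f)$; this is the easy half of Saito's theorem. Next I would pass from the formal local ring to a graded polynomial ring: after clearing denominators I may assume $w_1,\ldots,w_n,W\in\mathbb{Z}_{>0}$ (the ratios $W/w_i$ are unchanged), and I equip $R:=\mathbb{C}[x_1,\ldots,x_n]$ with the grading $\deg x_i=w_i$, so that each $f_i$ is homogeneous of degree $d_i:=W-w_i$.

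The key geometric input is that the isolated-singularity hypothesis forces the common zero locus $V(f_1,\ldots,f_n)\subseteq\mathbb{C}^n$ to be exactly $\{0\}$: any critical point $a\neq 0$ would, by weighted homogeneity, generate the entire punctured curve $\{(\lambda^{w_1}a_1,\ldots,\lambda^{w_n}a_n):\lambda\in\mathbb{C}^{*}\}$ of critical points, whose closure meets the origin, contradicting isolatedness. Hence $\dim R/(f_1,\ldots,f_n)=0$, so $f_1,\ldots,f_n$ is a homogeneous system of parameters in the $n$-dimensional Cohen--Macaulay ring $R$, and therefore a regular sequence. The Koszul complex $K_\bullet(f_1,\ldots,f_n;R)$ is then a graded free resolution of $R/(f_1,\ldots,f_n)$, and additivity of Hilbert series along this resolution yields
\[
	H_{R/(f_1,\ldots,f_n)}(t)=\prod_{i=1}^{n}\frac{1-t^{d_i}}{1-t^{w_i}}=\prod_{i=1}^{n}\frac{1-t^{W-w_i}}{1-t^{w_i}},
\]
which is a genuine polynomial precisely because the quotient is finite-dimensional. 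Evaluating at $t=1$, where each factor $\dfrac{1-t^{W-w_i}}{1-t^{w_i}}$ tends to $\dfrac{W-w_i}{w_i}$, gives
\[
	\dim_{\mathbb{C}}R/(f_1,\ldots,f_n)=\prod_{i=1}^{n}\frac{W-w_i}{w_i}=\prod_{i=1}^{n}\left(\frac{W}{w_i}-1\right).
\]

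Finally I would reconcile this affine count with the local Milnor number: since $(f_1,\ldots,f_n)$ is supported only at the origin, $R/(f_1,\ldots,f_n)$ is Artinian local and equal to its completion $\mathcal{O}/(f_1,\ldots,f_n)$, so $\mu_0=\dim_{\mathbb{C}}R/(f_1,\ldots,f_n)$; combined with $\tau_0=\mu_0$ this finishes the proof. The step I expect to require the most care is exactly this passage between the formal local ring $\mathcal{O}$, where $\mu_0$ and $\tau_0$ are defined, and the graded ring $R$, where the Koszul--Hilbert machinery applies: it rests entirely on the "no critical points off the origin" observation, which is where weighted homogeneity does work beyond the Euler relation.
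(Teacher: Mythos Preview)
The paper does not prove this theorem at all: it is quoted as the classical Milnor--Orlik result with a citation to \cite{Milnor1970}, and the narrative immediately moves on to the question of generalizing it to $k$-th Tjurina numbers. So there is no ``paper's own proof'' to compare against.

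That said, your argument is correct and is the standard algebraic route. The Euler relation gives $f\in J(f)$ and hence $\tau_0=\mu_0$; weighted homogeneity together with isolatedness forces $V(f_1,\ldots,f_n)=\{0\}$ via the $\mathbb{C}^*$-orbit argument; this makes $f_1,\ldots,f_n$ a homogeneous system of parameters, hence a regular sequence in the Cohen--Macaulay ring $R$; the Koszul resolution then yields the Hilbert series $\prod_i (1-t^{W-w_i})/(1-t^{w_i})$, and evaluating at $t=1$ gives the product $\prod_i (W/w_i-1)$. The identification of the graded Artinian quotient with its completion is exactly as you say. This is not the original topological proof of Milnor and Orlik (who computed the characteristic polynomial of the monodromy and hence the Betti numbers of the Milnor fiber), but it is the approach most in keeping with the algebraic machinery the present paper uses, and indeed the paper takes for granted later on that $f_1,\ldots,f_n$ is a regular sequence for precisely the reason you give.
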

The question regarding the generalization of the formula from the context of the traditional Milnor and Tjurina numbers to the $k$-th Tjurina numbers is both significant and naturally arises in the study of these invariants. As mentioned earlier, such a generalization would greatly enhance our comprehension of how these invariants relate to the weighted homogeneous structure in a broader and more comprehensive manner.

The work carried out in \cite{hussain2023k} has been quite illuminating, as the authors have successfully computed the $k$-th Tjurina numbers and the $k$-Milnor numbers specifically for two-dimensional homogeneous singularities. This achievement has paved the way for further investigations and observations.

Motivated by their work, we have arrived at some notable observations. In the case where $k \leqslant m_0$, we can make the interesting conclusion that the $k$-th Tjurina numbers depend solely on the weights, mirroring the situation in the Milnor-Orlik theorem to some extent. This is formalized in the following theorem:

\begin{thmletter}\label{thm:muIntro}
	Let $V(f)$ be a weighted homogeneous singularity with Milnor number $\mu_0$. Here, we denote by $c$ the number of weights of $f$ that attain the maximal value. The explicit formulas for $\mu_k$ and $\tau_k$ are as follows:

\begin{align}
	\mu_k & = \mu_0 + n \binom{k-1+n}{n} \text{ for } k < m_0; \label{eq:muk} \\
	\mu_{m_0} & = \mu_0 + n \binom{m_0-1+n}{n}  -\frac{1}{2} c ( 2 n -c- 1);  \label{eq:mum0} \\
	\tau_k & = \mu_0 + n \binom{k-1+n}{n}  - \binom{k-2+n}{n} \text{ for } k < m_0; \label{eq:tauk}\\
	\tau_{m_0} & = \mu_0 + n \binom{m_0-1+n}{n} - \binom{m_0-2+n}{n}  -\frac{1}{2} c ( 2 n -c- 1)  \label{eq:taum0}.
\end{align} 
\end{thmletter}

These formulas offer precise ways to calculate the $k$-th Milnor and Tjurina numbers under the specified conditions and provide valuable insights into their dependence on the weights and other relevant parameters.

However, it is important to note that the situation changes when we consider the $k$-th Tjurina numbers with $k \geqslant m_0$. In such cases, these numbers are not simply determined by the weights alone. This indicates that there are additional factors or complexities that come into play when dealing with larger values of $k$, and further research would be needed to fully understand and characterize the behavior of these invariants in such scenarios. 
As the $k$-th Tjurina numbers are more complicated to formulate, we  describe the both numbers for all $ k \geqslant 0 $ by means of Hilbert-Poincar\'{e} series
\[
	\mathbb{A}_f(t) := \sum_{k=0}^\infty \tau_k  t^k 
	\text{ and }
	\mathbb{M}_f(t) := \sum_{k=0}^{\infty} \mu_k t^k 
\] 
respectively. 
The main goal of this paper is to derive the formulas of these series which can be viewed as a natural continuation of Milnor-Orlik theorem.
\begin{thmletter}\label{thm:AtMtIntro}
	Let $ f $ be a weighted homogeneous polynomial and assume that $V(f) $ is an isolated hypersurface singularity. 
	Denote by $ m_i $ the multiplicity of $ f_i = \frac{\partial f }{\partial x_i} $ and set $ m_{i,j} = \min \{m_i, m_j \}$. 
	Then the series $ \mathbb{M}_f(t)  $ and $ \mathbb{A}_f (t) $ are given by the following formulas:
	\begin{equation}\label{equ:Mt_intro}
	\mathbb{M}_f(t) = \frac{t}{(1-t)^{n+1} } \left ({n -\sum_{i<j}  t^{m_{i,j}}} \right ) + \frac{\mu_0 + \mathbb{Z}_{\infty}(t) \cdot t  }{1-t} + \sum_{i=1}^{k} \frac{(t-t^{L_i+1})}{1-t} \mathbb{H}_{L_i} (t)   
	\end{equation}
	and  
	\begin{equation}\label{equ:At_intro}
		\mathbb{A}_f(t) = \frac{t}{(1-t)^{n+1} } \left ({n-t-\sum_{i<j}  t^{m_{i,j}}} \right ) + \frac{\mu_0 + \mathbb{Z}_{\infty}(t) \cdot t  }{1-t} + \sum_{i=1}^{k} \frac{(t-t^{L_i+1})}{1-t} \mathbb{H}_{L_i} (t)  ;
	\end{equation}
	where $L_1, \ldots, L_k$ are the gap numbers, 
	$ \mathbb{Z}_{\infty}(t) $ and $ \mathbb{H}_{L_i}(t) $
	are series associated to $ f $. See Section \ref{sec:MainResult} for the detailed definitions.
\end{thmletter}

The key to the proof is to switch from the classical Koszul complex to an appropriate resolution of graded modules of the Jacobian ideal $J(f)$. We discover that the related homology space admits a natural bigraded structure by using the degree and our level filtration. The relative graded modules are the crucial clue to characterize the Hilbert-Poincar\'{e} series properly.  

Another main contribution of the paper is give a demonstration of the formulas 
\eqref{equ:Mt_intro} and \eqref{equ:At_intro} for the complete list of weighted homogeneous singularities in three variables. 
We achieve the explicit formulas for both $ \mathbb{A}_f(t) $ and $ \mathbb{M}_f(t)$, and thus the $k$-th Tjurina numbers and the $k$-th Milnor numbers are understood well. Precisely, we have the following results.
\begin{thmletter}\label{thm:seriesIntro}
	Suppose that the polynomials $f^{(i)}$ with $ i =1 ,\ldots , 7$ form the complete list of weighted homogeneous singularities in three variables.  
	We obtain the series:
	\begin{equation*} 
		\mathbb{M}_f (t) =	\frac{ \mu_0  }{1-t} + \frac{3 t + t \cdot \mathbb{L}_i(t)}{(1-t)^4}
	\end{equation*}
	and  
	\begin{equation*} 
		\mathbb{A}_f(t) =	\frac{ \mu_0  }{1-t} + \frac{3 t -t^2+ t \cdot \mathbb{L}_i(t)}{(1-t)^4} .
	\end{equation*}
	The precise definitions of $ f^{(i)} $ and $ \mathbb{L}_{i}(t) $ are given in Section \ref{sec:Applications}.
\end{thmletter}

The paper is structured as follows.
In Section \ref{sec:Geometric} we introduce the pointed deformations and describe the tangent space in terms of the $ k$-th moduli algebra. 
In Section \ref{sec:Connections}, we reduce the computation of $k$-th Tjurina numbers to the graded module $ J(f) $ by means of Hilbert-Poincar\'{e} series.
In Section \ref{Representation},  We construct the modified version of Koszul complex, and deal with the filtration of the correspondent homology.
Section \ref{sec:MainResult} is devoted to proving our Main Theorem \ref{thm:AtMtIntro} with the help of the new Koszul type complex.
In Section \ref{sec:Low}, we determine the $k$-th Tjurina numbers for the two-dimensional singularities and also discuss the lower bound for three-dimensional singularities. 
Section \ref{sec:Applications} deals with the Hilbert-Poincar\'{e} series of each type of three-dimensional weighted homogeneous singularities in Main Theorem \ref{thm:seriesIntro}. 

\section{Geometric Perspective of $k$-th Tjurina Numbers}\label{sec:Geometric}
\subsection{Deformation of Singularity}
We recall some basic knowledge of deformation theory. Let $ \Germs  $ be the category of local analytic variety germs and $\mathbf{Sets}$ the category of sets.
Let $ (X , p ) $ be an isolated hypersurface singularity in $ \mathbb{C}^n $. 
Given a local germ $ (\mathcal{T},0) $, a deformation over the base $ \mathcal{T} $ means a local germ $ (\mathcal{X} , \iota(p) )$ associated with germ embedding $ \iota:  X \to \mathcal{X} $ and germ projection $ \rho : \mathcal{X} \to \mathcal{T} $ such that the diagram  
\[
	\begin{tikzcd}
		X \arrow{d}[swap]{ } \arrow{r}{\iota}  
		& \mathcal{X}  \ar[d,"\rho"] \\
		0 \arrow{r}[swap]{} &{  \mathcal{T} }
	\end{tikzcd}
\]
is a Cartesian square.
Deformations $ \rho:\mathcal{X} \to 
\mathcal{T} $ and $ \rho': \mathcal{X}' \to \mathcal{T}'$ of $(X,0)$ are said to be equivalent if there exist some isomorphisms 
$ \Psi : \mathcal{X} \to \mathcal{X}' $ and $ \psi :\mathcal{T} \to \mathcal{T}' $ such that the diagram
\[
	\begin{tikzcd}[row sep =tiny]
		{} & 	X\ar{ld}\ar{rd}
		\ar[dd]  & [1.5em] \\
		\mathcal{X}\ar[dd, "\rho"] \ar[rr, "\Psi",crossing over] & {} &\mathcal{X}'\ar[dd, "\rho'"]\\
		{} &  0 \ar{ld}\ar{rd}  & {} \\
		\mathcal{T} \ar[rr, "\psi"]& {} &  \mathcal{T}' 
	\end{tikzcd}
\]
commutes.

\subsection{Pointed Deformations}
We assume that $ (X ,p) $ is an isolated singularity located at the point $p$. Let $ \mathcal{O}_X $ be the local ring of $X$ with the maximal ideal $ m_X $. The fat point $ P$ associated to the closed point $p$ with structure algebra 
\[
	\mathcal{O}_{X,k} :=  \mathcal{O}_X / m_X^k
\]
 is called the $k$-residue fat point of $ X $.
The induced morphism $P \to X$ corresponds to the quotient map  $
	\mathcal{O}_X \to \mathcal{O}_{X,k} 
$.
\begin{defn}
	Let $ P $ be the $k$-residue fat point of singularity $ (X,p) $. The chain of successive morphisms $ \mathcal{P} \to \mathcal{X} \to \mathcal{T} $ together with embeddings $ P \to \mathcal{P}$ and $ X \to \mathcal{X} $ is called a $ k $-pointed deformation of $ X $ over 
	$\mathcal{T} $ if the following conditions hold:  
	\begin{enumerate}
		\item The pullback of the chain $ \mathcal{P} \to \mathcal{X} \to \mathcal{T} $ to $ 0 \in \mathcal{T} $ is identical to $ P \to X \to 0 $. In other words, the two squares of the diagram 
		\[\begin{tikzcd}
			P \ar[d]\ar[r] & \mathcal{P}\ar[d] \\
 			X \ar[d]\ar[r] & 	 \mathcal{X}\ar[d] \\
			0 \ar[r] & \mathcal{T} 
		\end{tikzcd} \]
		are Cartesian.
		\item The morphism $\mathcal{X} \to \mathcal{T} $ is a deformation of $ X$ over $\mathcal{T}$. 
		\item The morphism $ \mathcal{P} \to \mathcal{T} $ is equivalent to a trivial deformation of $ P $ over $ \mathcal{T}$, and $ \mathcal{P} \to \mathcal{X} $ is an embedding. 
	\end{enumerate}
	A $k$-pointed deformation over $ \mathcal{T}$ will be denoted by $ (\mathcal{P} \to \mathcal{X} \to \mathcal{T})$.
\end{defn}
Given two $k$-pointed deformations of $ X $, namely, $ \mathcal{P} \to  \mathcal{X} \to \mathcal{T} $ and $\mathcal{P}'\to \mathcal{X}' \to \mathcal{T}' $, we say they are equivalent if the diagram
\[
	\begin{tikzcd}[row sep =tiny]
		{} & P \ar[ld]\ar[dd] \ar[rd] & [1.5em] \\
		\mathcal{P} \ar[rr, crossing over]\ar[dd] & {} & \mathcal{P}' \ar[dd] \\
		{} & 	X\ar{ld}\ar{rd}\ar{dd}  & {} \\
		\mathcal{X}\ar{dd} \ar[rr, crossing over] & {} &\mathcal{X}'\ar{dd}\\
		{} &  0 \ar{ld}\ar{rd}  & {} \\
		\mathcal{T} \ar[rr, crossing over]& {} &  \mathcal{T}' 
	\end{tikzcd}
\]
commutes.

In particular, when $k = 1$, then the $1$-residue point $ P $ is just the closed point $ p $ and the $1$-pointed deformation of $ X $ coincides with the deformation with sections (see \cite{greuel2007introduction} and \cite{hirsch2006deformations}).

\subsection{Deformation Functor}
For singularity $(X,p)$, we consider the deformation functor
\begin{equation*}
	\Def_{k}^{X}:  \;  \Germs  \to \Sets,
\end{equation*} 
which sends each germ $ \mathcal{T} $ to the collection of
equivalence classes of $k $-pointed deformations $ \mathcal{X} $ over $\mathcal{T}$. 
Let $\mathcal{T}_{\epsilon}$ be the germ with the structure algebra $\mathbb{C}[\epsilon]$ with $ \epsilon^2 = 0$. 
The tangent space of $ \Def_{k}^{X} $ is defined to be the first order $k$-pointed deformation of $ X $, namely 
\[
	T \Def_{k}^{X} = \Def_k^{X} (\mathcal{T}_{\epsilon}).
\]
 We assume from now on that $ X $ is located at the origin of smooth germ $ (\mathbb{C}^n , 0 )$. Let $ \mathcal{X} \to \mathcal{T}$ be the deformation of $X$. 
There exists an unfolding morphism $\mathcal{X} \to \mathbb{C}^n \times \mathcal{T} $. 
Up to coordinate transformations, one may transform the pointed singularity of each fiber $ \mathcal{X}_t $ to the origin of $ \mathbb{C}^n $, where the fat point $ \mathcal{P}_t $ is located at the origin of each fiber. Applying another coordinate transformation if necessary, we may further assume that 
$ \mathcal{P} $ is the trivial deformation that factors through $ \mathbb{C}^n \times \mathcal{T} \to \mathcal{T}$. 

Now we consider the case that $ (X ,p)= (V(f) , 0)$ is a hypersurface singularity in  $  \mathbb{C}^n $.
Recall $\mathcal{O}$ the local algebra of $ (\mathbb{C}^n,0) $ with the maximal ideal $ m $. We have $ \mathcal{O}_X = \mathcal{O}/ (f) $. Then $ m_X = m \cdot \mathcal{O}/ (f) $ is the maximal ideal of $ \mathcal{O}_X$. It follows that the structure of the $k$-residue point $ P $ is given by  $ \mathcal{O}_{f,k} := \mathcal{O} / (m^k , f )$. One may easily check that 
\[
	\dim \mathcal{O}_{f,k} = \begin{cases}
		\binom{n+k-1}{n}  & \text{for $k < m_0 $}; \\
		\binom{n+k-1}{n} - \binom{n+k-1-m_0 }{n} & 	\text{for $k \geqslant m_0 $},
	\end{cases}	
\]
where $ m_0 = \mult(f)$ denotes the multiplicity of $f$. As discussed above, a $k$-pointed deformation over $ \mathcal{T}_\epsilon $ is represented by 
\[
	\mathcal{P} : = P \times \mathcal{T}_\epsilon \to \mathcal{X}, \]
where 
$ \mathcal{X}:= V(f + g \epsilon ) \subseteq \mathbb{C}^n \times \mathcal{T}_\epsilon$ with $ g \in \mathcal{O}$. The corresponding structure morphism of $\mathcal{P} \to \mathbb{C}^n\times \mathcal{T}_{\epsilon} $ is represented by the quotient map 
\begin{equation}\label{eq:quotient}
	\mathcal{O} [\epsilon] \to \mathcal{O} [\epsilon]/ (f,m^k).
\end{equation}
The relation between the $ k$-th Tjurina number and the $k$-pointed deformation is stated in the following theorem.
\begin{thm}
	Suppose that $ (V(f),0) \subseteq (\mathbb{C}^n ,0) $ is an isolated hypersurface singularity.  Then
	\begin{equation}\label{equ:TDef}
		T \Def_{k}^{V(f)} \cong  \frac{(f,m^k) }{ \left( f ,m^k J(f)  \right) }.
	\end{equation}
	Therefore, we have $\dim T \Def_{k}^{V(f)} = \tau_k - \dim \mathcal{O}_{f,k} $. 
\end{thm}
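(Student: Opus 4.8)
The plan is to unwind the definition of a $k$-pointed deformation over the dual numbers $\mathcal{T}_\epsilon$ and match the equivalence classes with the quotient $(f,m^k)/(f,m^kJ(f))$. First I would fix the presentation established just above the statement: a first-order $k$-pointed deformation is given by $\mathcal{X} = V(f + g\epsilon) \subseteq \mathbb{C}^n \times \mathcal{T}_\epsilon$ for some $g \in \mathcal{O}$, together with the trivial family $\mathcal{P} = P \times \mathcal{T}_\epsilon$ embedded compatibly. The first task is to determine which $g$ actually give an \emph{admissible} deformation, i.e.\ one for which $\mathcal{P} \to \mathcal{X}$ is a well-defined embedding factoring through $\mathbb{C}^n \times \mathcal{T}_\epsilon \to \mathcal{T}_\epsilon$ in the prescribed trivial way. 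Translating the condition that $V(f+g\epsilon)$ still contains the constant fat point $P = \Spec \mathcal{O}/(f,m^k)$ amounts to requiring $f + g\epsilon \in (f, m^k)\mathcal{O}[\epsilon]$, equivalently $g \in (f, m^k)$. So the set of admissible first-order deformations is parametrized by $g \in (f,m^k)$, a $\mathbb{C}$-vector space.

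Next I would identify the trivial ones, i.e.\ those $k$-pointed deformations equivalent to the product family. A coordinate change on $\mathbb{C}^n \times \mathcal{T}_\epsilon$ over $\mathcal{T}_\epsilon$ that fixes the trivial section $\mathcal{P}$ has the form $x_i \mapsto x_i + \epsilon\, h_i$ with $h_i \in m^k$ (the constraint $h_i \in m^k$ coming precisely from the requirement that the embedded fat point $P$ — which sees $\mathcal{O}/m^k$ — be preserved, not merely the reduced point). Under such a substitution $f \mapsto f + \epsilon \sum_i h_i f_i$, so the deformation $g$ is changed by an element of $\sum_i m^k f_i = m^k J(f)$; one also has the freedom to add a multiple of $f$ itself coming from re-choosing the defining equation, which contributes the $(f)$ summand. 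Hence two admissible $g, g'$ give equivalent $k$-pointed deformations iff $g - g' \in (f, m^k J(f))$, and the map $g \mapsto [g]$ descends to a bijection
\[
 \Def_k^{V(f)}(\mathcal{T}_\epsilon) \;\xrightarrow{\ \sim\ }\; \frac{(f,m^k)}{(f, m^k J(f))}.
\]
Checking that this bijection is $\mathbb{C}$-linear (additivity under the natural sum of first-order deformations) then yields the claimed isomorphism of vector spaces, and the dimension formula follows immediately from the exact sequence $0 \to (f,m^k)/(f,m^kJ(f)) \to \mathcal{O}/(f,m^kJ(f)) \to \mathcal{O}/(f,m^k) \to 0$, i.e.\ $\dim T\Def_k^{V(f)} = \dim A_k(f) - \dim \mathcal{O}_{f,k}$.

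The main obstacle, and the step deserving the most care, is the trivialization analysis: one must verify that \emph{every} equivalence of first-order $k$-pointed deformations is realized by a coordinate change of the above restricted shape (with $h_i \in m^k$), and conversely that no such change destroys the pointed structure. This is where the $m^k$ (rather than $m$) genuinely enters and produces $m^k J(f)$ instead of the full $mJ(f)$ or $J(f)$; I would handle it by writing out a general automorphism $\Psi$ of the triple $(P \to X \to 0) \hookrightarrow (\mathcal{P} \to \mathcal{X} \to \mathcal{T}_\epsilon)$, using that $\Psi$ is the identity modulo $\epsilon$ and must restrict to the \emph{identity} on $\mathcal{P}$ (since the embedded trivial family is part of the data), and reading off that its components $h_i$ must lie in the defining ideal $m^k$ of $P$. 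For $k=0,1$ this recovers the known statement (Lemma 2.5 of \cite{hirsch2006deformations}), which is a useful consistency check.
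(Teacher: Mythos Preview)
Your proposal is correct and follows essentially the same route as the paper: parametrize admissible $g$ by $(f,m^k)$, then identify the trivial ones via a lifted ambient automorphism $x_i \mapsto x_i + \epsilon\,h_i$ together with a unit rescaling $1+\alpha\epsilon$ of the defining equation, and finish with the short exact sequence. One small imprecision worth fixing: the fat point $P$ has structure ring $\mathcal{O}/(f,m^k)$, not $\mathcal{O}/m^k$, so the condition that $\Psi$ restrict to the identity on $\mathcal{P}$ forces $h_i \in (f,m^k)$ rather than merely $h_i \in m^k$ (this is exactly what the paper records); the conclusion is unaffected because the extra contributions $f\cdot f_i \in (f)$ are already absorbed by the $(f)$ summand coming from the unit rescaling.
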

\begin{proof}
	We have already known that the first order the deformation $ \mathcal{X}\to \mathcal{T}_{
		\epsilon}$ of $ X := V(f)$ is equivalent to the hypersurface 
	$ \mathcal{F} = f + g \epsilon $
	of $\mathbb{C}^n \times T_{\epsilon}$ for some (formal) analytic function $g \in \mathcal{O} $. The germ morphism $ \mathcal{P} \to \mathbb{C}^n \times \mathcal{T}_\epsilon $ in this case corresponds to the quotient map
	\[
		\mathcal{O}[\epsilon] \to \mathcal{O}[\epsilon]/(\mathcal{F}, m^k). 	
	\]
	Since we require the residue fat point of fiber $ V(\mathcal{F}_\epsilon)$ is the same with $P$, we have 
	\[ \mathcal{F} = f \mod m^k \mathcal{O}[\epsilon]  \]derived from \eqref{eq:quotient}.  
	This yields that $ g \in (f, m^k)$.
	In other words, we have the surjective map
	\[
		\mathcal{V}: (f, m ^k ) \to 
		T \Def_{k}^X
	\]
	which sends $ g $ to the infinitesimal deformation $ V(f + \epsilon g)$. Notice that different $g $ may gives rise to the same deformation. To understand the kernel of $\mathcal{V}$, we shall determine the condition that when $\mathcal{X}$ is equivalent to the trivial deformation $X \times \mathcal{T}_{\epsilon}$ (represented by the zero locus of $f$).
	From the lifting lemma, the isomorphism between $ \mathcal{X}$ and $ X\times \mathcal{T}_{\epsilon}$, can be lifted to the automorphism
	$ \phi: \mathbb{C}^n\times \mathcal{T}_{\epsilon} \to\mathbb{C}^n\times \mathcal{T}_{\epsilon} $ satisfying the following conditions:
	\begin{enumerate}
		\item[(1)] $ \phi|_{ \mathbb{C}^n \times 0 }  = \id $;
		\item[(2)] $ \phi|_{P \times \mathcal{T}_\epsilon} = \id $;
		\item[(3)] applying the automorphism $\phi $, the ideal $( f )$ coincides with $ (\mathcal{F}) $ in the local ring $ \mathcal{O}[\epsilon] $.
	\end{enumerate}
	Using the condition (1), we represent the pullback morphism 
	$ \phi^* : \mathcal{O}[\epsilon] \to \mathcal{O}[\epsilon] $ 
	 by
	  \[ \phi^*(x_i) = x_i + \epsilon \delta_i (x_1,\ldots,x_n) \]
	   with $ \delta_i (x) \in \mathcal{O}$. 
	   The restriction of $ \phi$ on $ P \times \mathcal{T}_\epsilon $ is represented by  
	   \begin{align}
		\phi^* : \mathcal{O}[\epsilon]/(f, m^k ) & \to 	\mathcal{O}[\epsilon] / (f,m^k ),\\
	x_i	& \mapsto  x_i + \epsilon \delta_i(x_1, \ldots, x_n) \mod (f,m^k).
	   \end{align}
	   Now the condition (2) yields $ \phi^* = \id \mod (f,m^k)$, and therefore, $ \delta_i \in (f, m^k ) $.
	   From Taylor expansion, we have 
	\[
		f(\phi(x_1), \ldots, \phi(x_n))= f(x_1,\ldots,x_n) + \sum_{i=1}^n \delta_i (x_1,\ldots,x_n) \partial_i (f) \epsilon.
	\]
	The deformation $X$ is equivalently trivial if and only if the principle ideals 
	\[
		\left(f(x_1,\ldots,x_n) + \sum_{i=1}^n \delta_i(x_1,\ldots,x_n) \partial_i (f)\epsilon \right) \]
		and
		\[ \left(
			f(x_1,\ldots,x_n)+g(x_1,\ldots,x_n) \epsilon \right) 
	\]
	coincide.
	Since the invertible element in $\mathcal{O}[\epsilon]$ is contained in $ \mathbb{C} \oplus \epsilon \mathcal{O}  $, it follows that 
	\begin{align*}
		& f(x_1,\ldots,x_n) + \sum_{i=1}^n \delta_i(x_1,\ldots,x_n) \partial_i (f)\epsilon \\
		& = (1 + \alpha \epsilon ) (f(x_1,\ldots,x_n)+g(x_1,\ldots,x_n) \epsilon) \\
		&= f(x_1,\ldots,x_n)+(g(x_1,\ldots,x_n) + \alpha f(x_1,\ldots,x_n)) \epsilon
	\end{align*}
	for some $ \alpha \in \mathcal{O}$.
	This implies that 
	\[ g = -\alpha f + \sum_{i=1}^n \delta_i(x_1,\ldots,x_n) \partial_i (f) \in \left( f, m^k J(f) \right). \]
	Hence, the deformation represented by $ g $ is trivial if and only if $ g \in \left( f, m^k J(f) \right) $. This means that the kernel of $ \mathcal{V}$ equals $ (f, m^k J(f))$, so the isomorphism \eqref{equ:TDef} holds.
	The second assertion follows easily  from the exact sequence
\[
	0 \to T \Def_{k}^X \to A_k(f) \to  \mathcal{O}_{f,k}  \to  0.
\]
\end{proof}

\section{Connections of Hilbert-Poincar\'{e} Series $ \mathbb{M}(t) , \mathbb{A}(t), \mathbb{J}(t) , \mathbb{K}(t)$}\label{sec:Connections}
In this section, we would like to construct Hilbert-Poincar\'{e} Series, denoted respectively by $ \mathbb{M}(t) , \mathbb{A}(t), \mathbb{J}(t) , \mathbb{K}(t)$, associated to a given hypersurface singularity $ ( V(f),0 )$ and investigate their relations.
\subsection{The Relation between $\tau_k$ and $ \mu_k $}

 Now we assume that $ f \in \mathcal{O}$ is a weighted homogeneous polynomial. Then the ideals $\left ( f, m^k J(f) \right ) $ and $   m^k J(f) $ coincide when $k = 0 $ or $1$.  As a consequence, we have 
\[
	M_0(f) = A_0(f) \qquad M_{1}(f) = A_1(f). 	
\]
In general, we consider the exact sequence
\[
	0 \to R_k \to  M_k(f) \to A_k(f) \to 0  ,
\]
where 
\[ R_k :=\frac{\left ( m^{k}J(f) , f \right ) }{ m^{k}J(f)} \cong \frac{\mathcal{O}\cdot f }{m^{k}J(f) \cap  ( \mathcal{O}\cdot f)   } =\frac{ \mathcal{O} \cdot f }{ m^{k-1}  \cdot f  } \cong \frac{\mathcal{O}}{m^{k-1}}
\]
by applying $ (f) \subseteq m J(f) $. 
Hence, for $k \geqslant 2 $ we have 
\[
	\dim R_k = \binom{k-2+n}{n}. 
\]
Therefore, we get   
\begin{equation}\label{equ:dimMA}
	\mu_k = \tau_k +  	\binom{k-2+n}{n} .
\end{equation}
Recall the series $\mathbb{A}_f(t) $ and $ \mathbb{M}_f(t) $ associated to $A_k(f)$ and $ M_k(f) $ respectively. 
Applying the equation~\eqref{equ:dimMA} we formulate the connection between $\mathbb{A}_f(t) $ and $ \mathbb{M}_f(t) $, expressed by the formula
	\begin{equation}\label{equ:MA}
		\mathbb{M}_f(t)=\mathbb{A}_f(t)+{\frac{t^2}{(1-t)^{n+1}}}.
	\end{equation}
\subsection{Graded Glgebra of $m$-Filtration of Jacobian ideal} 
Recall $ J(f) \subseteq \mathcal{O}$ the Jacobian ideal of $ f $. By multiplication with $ m^i $, we define the filtration of $ J(f) $:
\[
	J(f)   \supseteq     m J(f)    \supseteq   m^2 J(f)     \supseteq m^3 J(f)  \supseteq \cdots.
\]
Denote the relative quotient 
\[ J_k(f) =\gr_k J(f) \cong \dfrac{m^k J(f)}{  m^{k+1} J(f ) }.
\]
Then we get the exact sequence
\[
	0\to J_k(f) \to M_{k+1}(f) \to M_{k}(f) \to 0	.
\]
It follows the following useful lemma.
\begin{lem}\label{lem:dimMk}
	The $k$-th Tjurina number of $V(f)$ is given by
	\[
		\mu_k =  \mu_0  + \sum_{i=0}^{k-1} \dim J_{i} (f).
	\]
\end{lem}
For a monomial $a $ of degree $d$, the map
\[
	J_k(f) \to J_{k+ d} (f), \quad  [g]  \mapsto  [a \cdot g]
\]
is well-defined.
Now we fix the coordinate $(x_1,\ldots, x_n ) $ of $  \mathbb{C}^n $, and then the graded algebra $ \gr  \mathcal{O} := \oplus_{i = 0}^{\infty} m^i / m^{i+1}  $ is natural isomorphic to the polynomial ring
\[
	 \mathcal{P} : = \mathbb{C}[x_1, \ldots, x_n] . 
\]
\begin{defn}
	In this way, the graded vector space
\[ \gr J(f) := \oplus_{k=0}^{\infty } J_k(f),
\]
admits a graded $ \mathcal{P} $-module structure with respect to the coordinate $ \mathbf{x} = (x_1, \ldots, x_n ) $. 
\end{defn}
More explicitly, the degree of a non-vanishing element $ a \partial_i (f) \in \gr J(f) $ is defined as $ \deg (a)  $. Denote by the $\mathbb{J}_{f}(t) $ the Hilbert-Pancar\'{e} polynomial of $\gr J(f)$: 
\[
	\mathbb{J}_{f}(t) = \sum_{t=0}^{\infty} \dim (J_{k} ) \cdot  t^k  .
\]
\begin{lem} \label{lem:AMJ}
	Suppose that $ \mathbb{A}_f(t) ,\mathbb{M}_f(t) ,\mathbb{J}_f(t) $ are the series defined associated to the weighted homogeneous polynomial $ f $. We obtain the relations 
	\begin{equation}\label{equ:Mt}
		\mathbb{M}_f(t) =	\frac{ \mu_0 + \mathbb{J}_f(t) t }{1-t}
	\end{equation}
	and  
	\begin{equation}\label{equ:At}
		\mathbb{A}_f(t) =	\frac{ \mu_0 + \mathbb{J}_f(t) t }{1-t} - \frac{t^2}{(1-t)^{n+1}}.
	\end{equation}
\end{lem}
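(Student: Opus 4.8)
The plan is to combine Lemma~\ref{lem:dimMk} with a generating-function manipulation, and then feed the result into equation~\eqref{equ:MA} to obtain the companion formula for $\mathbb{A}_f(t)$. First I would rewrite Lemma~\ref{lem:dimMk} as the statement that the sequence $(\mu_k)_{k\geqslant 0}$ satisfies $\mu_k - \mu_{k-1} = \dim J_{k-1}(f)$ for all $k\geqslant 1$, with $\mu_0$ the usual Milnor number. In terms of generating functions this difference relation is exactly a multiplication by $(1-t)$: we have
\[
(1-t)\,\mathbb{M}_f(t) = \mu_0 + \sum_{k\geqslant 1}\bigl(\mu_k-\mu_{k-1}\bigr)t^k = \mu_0 + \sum_{k\geqslant 1}\dim J_{k-1}(f)\,t^k = \mu_0 + t\sum_{j\geqslant 0}\dim J_j(f)\,t^j = \mu_0 + t\,\mathbb{J}_f(t).
\]
Dividing by $(1-t)$ gives \eqref{equ:Mt}. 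I would be slightly careful to justify the interchange of summations (everything is a formal power series, or alternatively one notes all coefficients are finite since $V(f)$ has an isolated singularity, so $\dim M_k(f)<\infty$), but this is routine.

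For \eqref{equ:At}, I would simply substitute \eqref{equ:Mt} into the already-established relation \eqref{equ:MA}, namely $\mathbb{M}_f(t) = \mathbb{A}_f(t) + t^2/(1-t)^{n+1}$, and solve for $\mathbb{A}_f(t)$:
\[
\mathbb{A}_f(t) = \mathbb{M}_f(t) - \frac{t^2}{(1-t)^{n+1}} = \frac{\mu_0 + \mathbb{J}_f(t)\,t}{1-t} - \frac{t^2}{(1-t)^{n+1}},
\]
which is exactly \eqref{equ:At}.

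The only genuine content, then, is Lemma~\ref{lem:dimMk} itself (already granted) and the bookkeeping that turns its telescoping sum into the rational expression $(\mu_0 + t\,\mathbb{J}_f(t))/(1-t)$; there is no real obstacle, the ``hard part'' being merely to state the formal-power-series identities cleanly and to point back to \eqref{equ:MA} for the $\mathbb{A}_f$ case. One could alternatively prove \eqref{equ:At} directly from the exact sequence $0\to R_k\to M_k(f)\to A_k(f)\to 0$ together with Lemma~\ref{lem:dimMk}, but routing through \eqref{equ:MA} is shorter and avoids re-deriving $\sum_k \dim R_k\, t^k = t^2/(1-t)^{n+1}$.
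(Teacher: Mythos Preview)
Your proposal is correct and follows essentially the same approach as the paper: both proofs invoke Lemma~\ref{lem:dimMk} to relate $\mu_k$ to the partial sums of $\dim J_i(f)$, perform a routine generating-function manipulation to obtain~\eqref{equ:Mt}, and then appeal to~\eqref{equ:MA} for~\eqref{equ:At}. Your multiplication-by-$(1-t)$ telescoping is a slightly cleaner packaging of the double-sum rearrangement the paper writes out, but there is no substantive difference.
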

\begin{proof}
	From  Lemma~\ref{lem:dimMk}, we have 
	\begin{align*}
		\mathbb{M}_f(t)  & = \sum_{k=0}^\infty \mu_k t^k \\
		& = \sum_{k=0}^\infty \left( \dim M_0(f) t^k  + \sum_{i=0}^{k-1} J_{i} (f) t^i t^{k-i}  \right) \\
		& = \frac{1}{(1-t)} \mu_0  + \sum_{j=1}^\infty \sum_{i=0}^{\infty} J_{i} (f) t^i \cdot t^{j}\\
		& =  \frac{1}{(1-t)} \mu_0 + \sum_{j=1}^\infty   t^{j} \mathbb{J}_f(t)\\
		& = \frac{1}{(1-t)}\mu_0 +   \frac{t}{(1-t)}  \mathbb{J}_f(t).
	\end{align*}
	This confirms Equation \eqref{equ:Mt}. The equation \eqref{equ:At} is derived from Equations \eqref{equ:MA} and \eqref{equ:Mt}.
\end{proof}
\subsection{Representation of $\gr J(f)$}
Inspired by Lemma \ref{lem:AMJ}, it suffices to compute $\mathbb{J}_f(t)$. This can be achieved by constructing the free resolution of $ \gr J(f)$.
A free graded $ \mathcal{P} $-module in this paper means the free $ \mathcal{P} $-module generated by finite variables $ \nu_1, \ldots, \nu_r$ with $ \deg( \nu_i) \in \mathbb{Z} $. Such $ \mathcal{P} $-module will be denoted by
$\mathcal{P} \left \langle \nu_1, \ldots, \nu_r \right \rangle$.
The Hilbert-Poincar\'{e} series of graded module $ E = \oplus_i E_i $ is defined as 
\[
	\mathbb{E}(t) = \sum_{i= 0}^{\infty} \dim (E_{i}) t^i	. 
\]
\begin{notation}
	For an element $ g $ of graded module $E = \oplus_i E_{i}$, we denote by $\mh (g)$ the minimal nonzero homogeneous part of $g$. Explicitly, if $ g = \sum_{i\geqslant i_0} g_i \in E$, with $ g_i \in E_{i} $ and $ g_{i_0} \not = 0 $, then $
	\mh (g) = g_{i_0}$.
\end{notation}

Now we return to the polynomial $f(x_1,\ldots, x_n)$.
Let $ E^{(1)} $ be the free graded $\mathcal{P}$-module generated by $ v_1 , \ldots, v_n $ with $ \deg (v_i) = 0 $ for $i = 1, \ldots , n$. Let $ f_i $ be the partial derivative of $f$ for each $ i $.
There exists a natural degree-preserving epimorphism
\[
	f_* : E^{(1)} \to \gr J(f)
\]
which sends $ a v_i $ to $ ( \ldots ,0,  a f_i , 0, \ldots ) \in J_ {\deg(a) }(f) $.
For determining the kernel of $f_*$, we define elements of $E^{(1)}$:
\[
	\mathcal{T}_{i,j}  :=  f_i v_j - f_j v_i.
\]
Identifying $ v_i $ with partial derivative operator $ \partial_i$,
we obtain the action of $ E^{(1)} $ on $\mathcal{P}$.
A useful fact says that if a derivation $D \in E^{(1)}$ gives $D (f) = 0  $, then $D $ is generated by $\mathcal{T}_{i,j} $.

\begin{defn}\label{def:Kf}
	Let $K(f) $ be the graded $ \mathcal{P} $-submodule of $ E^{(1)} $ generated by all the homogeneous elements of the form
	\begin{equation}\label{equ:Tij}
		\mh \left( \sum_{i<j } a_{i,j} \mathcal{T}_{i,j}  \right).	
	\end{equation}
	Denote by $ \mathbb{K}(t) $ the Hilbert-Poincar\'{e} series of $K(f)$. 
\end{defn}
It can be checked directly that every homogeneous element of $ K(f) $ is hence of the form \eqref{equ:Tij}.
\begin{lem}\label{lem:Kf}
	The kernel of $f_*$ is identical to the module $K(f)$.
\end{lem}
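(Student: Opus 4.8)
The plan is to realise $f_{*}$ as the map on associated graded modules induced by the obvious presentation of the Jacobi ideal, and then to reduce everything to the Koszul exactness recorded just above the statement. Concretely, I would introduce the $\mathcal{O}$-linear surjection $\tilde f\colon \mathcal{O}\langle v_{1},\dots,v_{n}\rangle\twoheadrightarrow J(f)$ sending $v_{i}\mapsto f_{i}$, filter the source by the submodules $m^{k}\langle v_{1},\dots,v_{n}\rangle$ and the target by $m^{k}J(f)$, and note that $\tilde f$ carries the first filtration onto the second, so that the target filtration is exactly the quotient filtration. Passing to associated graded modules then returns precisely the data of the lemma: $\gr$ of the source is $E^{(1)}=\mathcal{P}\langle v_{1},\dots,v_{n}\rangle$, $\gr$ of the target is $\gr J(f)$, and the induced degree-preserving map is $f_{*}$. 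Since $f_{*}$ is graded, $\ker f_{*}$ is a graded submodule of $E^{(1)}$, so it suffices to compare the two sides on homogeneous elements.

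For the inclusion $K(f)\subseteq\ker f_{*}$, I would take a homogeneous generator $\mh(u)$ of $K(f)$, where $u=\sum_{i<j}a_{i,j}\mathcal{T}_{i,j}=\sum_{i}p_{i}v_{i}$ with $p_{i}\in\mathcal{O}$; the defining property of a Koszul relation gives $\sum_{i}p_{i}f_{i}=0$ in $\mathcal{O}$. Setting $d=\ord(u)$ and separating the $p_{i}$ of order exactly $d$ from those of higher order, one checks that $\sum_{\ord(p_{i})=d}\mh(p_{i})f_{i}$ differs from $\sum_{i}p_{i}f_{i}=0$ only by an element of $m^{d+1}J(f)$; hence $f_{*}(\mh(u))=0$ in $J_{d}(f)$. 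As the leading forms $\mh(u)$ (with $u$ ranging over Koszul relations) already constitute a graded submodule — the remark following Definition~\ref{def:Kf} — this settles one inclusion, and in fact this argument applies verbatim to the leading form of any element of $\ker\tilde f$.

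For the reverse inclusion $\ker f_{*}\subseteq K(f)$, I would start from a homogeneous $w=\sum_{i}q_{i}v_{i}\in\ker f_{*}$ of degree $d$, so that $\sum_{i}q_{i}f_{i}\in m^{d+1}J(f)$. Since $m^{d+1}J(f)=\tilde f\bigl(m^{d+1}\langle v_{1},\dots,v_{n}\rangle\bigr)$, I can pick $p_{i}'\in m^{d+1}$ with $\sum_{i}p_{i}'f_{i}=-\sum_{i}q_{i}f_{i}$; then $u:=\sum_{i}(q_{i}+p_{i}')v_{i}$ lies in $\ker\tilde f$ and, if $w\neq 0$, has $\ord(u)=d$ and $\mh(u)=w$. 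Now I invoke the ``useful fact'': since $V(f)$ has an isolated singularity, $\mathcal{O}/J(f)$ has finite length, so $f_{1},\dots,f_{n}$ is a system of parameters in the regular local ring $\mathcal{O}$, hence a regular sequence, hence the Koszul complex is exact and $\ker\tilde f$ is generated by the $\mathcal{T}_{i,j}$. Writing $u=\sum_{i<j}a_{i,j}\mathcal{T}_{i,j}$ therefore exhibits $w=\mh(u)=\mh\bigl(\sum_{i<j}a_{i,j}\mathcal{T}_{i,j}\bigr)$ as an element of $K(f)$, completing the proof.

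The genuinely delicate points I expect are the bookkeeping at the filtration level — verifying that $f_{*}$ is indeed the associated-graded map of $\tilde f$, equivalently that $m^{k}J(f)$ is the quotient filtration — and the elementary identity $m^{d+1}J(f)=\tilde f\bigl(m^{d+1}\langle v\rangle\bigr)$ that makes the lifting step in the second inclusion work. Everything else is formal manipulation with leading forms, and the single external input, namely that $\ker\tilde f$ is generated by the Koszul relations $\mathcal{T}_{i,j}$, is exactly the statement quoted immediately before the lemma.
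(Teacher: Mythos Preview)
Your argument is correct and matches the paper's proof essentially line for line: both lift a homogeneous element $w=\sum q_i v_i\in\ker f_*$ to an element $u=\sum(q_i+p_i')v_i$ with $p_i'\in m^{d+1}$ and $\tilde f(u)=0$, invoke the Koszul-type fact that $\ker\tilde f$ is generated by the $\mathcal{T}_{i,j}$, and then take the leading form. Your version is a bit more thorough in that you explicitly spell out the easy inclusion $K(f)\subseteq\ker f_*$ and the associated-graded framing, whereas the paper handles one direction and dismisses the other as analogous.
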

\begin{proof}
	Suppose that $ \kappa \in K(f) $ is homogeneous of degree $ k $ such that $ f_*(\kappa) = 0 $. So $ \kappa $ is expressed as
	\[
		\kappa = \sum_{i=1}^n a_i v_i,
	\]
	where each coefficient $a_i$ is either zero or of degree $ k $. 
	Then the fact that $f_*( \kappa ) $ vanishes in $ \gr J(f)$ is equivalent to saying that
	\[
		f_* (\kappa) =\sum_{i=1}^n a_i f_i \in   m^{k+1} J(f) .\]
	Write 
	\[
		\sum_{i=1}^n a_i f_i = \sum_{i=1}^n b_i f_i
	\]
	where $ b_i \in m^{k+1} $. Set 
	\[
		D=\sum_{i=1}^n (a_i -b_i) v_i = \kappa - \sum_{i=1}^n b_i v_i .
	\]  
	We have $ D(f) = 0 $ by viewing $D$ as a derivation.  
	From the fact concerning derivations above, we see that 
	$  D $ is generated by $ \mathcal{T}_{i,j} $.
	It follows that
	\[
		\kappa-\sum_{i=1}^n b_i v_i  = \sum_{i<j} c_{i,j} \mathcal{T}_{i,j}
	\]
	for some $ c_{i,j} \in \mathcal{O} $ and thus 
	\[
		\kappa   =\mh \sum_{i<j} c_{i,j} \mathcal{T}_{i,j}.
	\]
	This implies that $K(f) \subseteq \ker(f_*)$. The converse can be deduced in the same manner.
\end{proof}
As a consequence, we have the following result.
\begin{cor}\label{cor:JtKt}
	Assume that $ f$ is weighted homogeneous. Let $ \mathbb{J}_f(t)$, $ \mathbb{K}_f(t)$ be defined above associated to the singularity $ V(f) $. 
	Then we have 
	\begin{equation}\label{equ:JtKt}
		\mathbb{J}_f(t) = \frac{n}{(1-t)^n}-\mathbb{K}_f(t) .
	\end{equation}
\end{cor}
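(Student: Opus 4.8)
The plan is to use the short exact sequence of graded $\mathcal{P}$-modules furnished by Lemma \ref{lem:Kf}, namely
\[
	0 \to K(f) \to E^{(1)} \xrightarrow{\,f_*\,} \gr J(f) \to 0 ,
\]
where $f_*$ is the degree-preserving epimorphism introduced just before Definition \ref{def:Kf} and $\ker(f_*) = K(f)$ by Lemma \ref{lem:Kf}. Since all three maps preserve the grading, each graded piece yields a short exact sequence of finite-dimensional $\mathbb{C}$-vector spaces
\[
	0 \to K(f)_k \to E^{(1)}_k \to J_k(f) \to 0 ,
\]
so $\dim J_k(f) = \dim E^{(1)}_k - \dim K(f)_k$ for every $k \geqslant 0$.

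The next step is to identify the Hilbert--Poincar\'e series of $E^{(1)}$. By definition $E^{(1)} = \mathcal{P}\langle v_1,\ldots,v_n\rangle$ is free on $n$ generators each of degree $0$, and $\mathbb{P}(t) = 1/(1-t)^n$ is the Hilbert--Poincar\'e series of the polynomial ring $\mathcal{P} = \mathbb{C}[x_1,\ldots,x_n]$. Hence $E^{(1)}$ has Hilbert--Poincar\'e series $n/(1-t)^n$. Summing the displayed vector-space identity against $t^k$ over all $k$ then gives
\[
	\mathbb{J}_f(t) = \frac{n}{(1-t)^n} - \mathbb{K}_f(t),
\]
which is exactly \eqref{equ:JtKt}. (The notation $\mathbb{K}_f(t)$ here is the series $\mathbb{K}(t)$ of Definition \ref{def:Kf}, attached to the singularity $V(f)$.)

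Most of this is bookkeeping; the one genuine point is the exactness of the three-term sequence, i.e. that $\ker(f_*)$ is precisely $K(f)$ and not something larger. That is already supplied by Lemma \ref{lem:Kf}, together with the remark after Definition \ref{def:Kf} that every homogeneous element of $K(f)$ has the form \eqref{equ:Tij}, so that $K(f)$ is genuinely a graded submodule of $E^{(1)}$ and the inclusion is degree-preserving. Given that, the main obstacle is essentially nil: one only needs to be careful that $f_*$ is degree-preserving (it sends $a v_i$ to an element of $J_{\deg(a)}(f)$, which is the defining property) so that the grading is respected term by term, and that convergence of the formal power series is not an issue since we work with formal Hilbert--Poincar\'e series. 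I would therefore present the argument as a three-line deduction from Lemma \ref{lem:Kf} and the computation of $\mathbb{E}^{(1)}(t)$.
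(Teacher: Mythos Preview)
Your proposal is correct and follows essentially the same route as the paper: compute the Hilbert--Poincar\'e series of $E^{(1)}$ as $n/(1-t)^n$ and then read off the identity from the short exact sequence $0 \to K(f) \to E^{(1)} \to \gr J(f) \to 0$ supplied by Lemma~\ref{lem:Kf}. The paper's version is just the terse form of what you wrote.
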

\begin{proof}
	The Hilbert-Poincar\'{e} series of $E^{(1)}$ is equal to 
	\[
		\sum_{i = 0}^{\infty }\dim (E^{(1)})_i t^i = n \sum_{i= 0}^{\infty} \binom{i +n-1}{n-1} t^i =  \frac{n}{(1-t)^n} .
	\] 
	Now the corollary follows directly from the exact sequence
	\[
		0 \to K(f) \to 	E^{(1)} \to \gr J(f) \to 0	. \]
\end{proof}
It is trivial to see that $J_0(f)$ is a linear combination of $\{v_1, \ldots v_n\}$, and therefore
\[ \dim J_{0}(f) = n . \]
This implies that 
\[
	\mathbb{J}_f(t) = n \mod t. 	
\]
This formula can be generalized up to the multiplicity of $f $.
\begin{cor}\label{cor:Jtmod}
	 Denote by $ m_0, m_i $ the multiplicity of $f , f_i$ respectively. Then
	\[
		\mathbb{J}_f(t) =  \frac{1}{(1-t)^n} \left( n-\frac{1}{2} c ( 2 n -c- 1) \cdot t^{m_0-1}  \right) \mod t^{m_0},
	\]
	where $ c = \# \{ i \in \{ 1, 2, \ldots, n \} | m_i = m_0-1 \}$. 
\end{cor}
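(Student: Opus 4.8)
The plan is to compute $\mathbb{J}_f(t)$ modulo $t^{m_0}$ by pinning down the module $K(f)$ in low degrees via Corollary \ref{cor:JtKt}, i.e.\ by determining $\mathbb{K}_f(t) \bmod t^{m_0}$. By Definition \ref{def:Kf}, $K(f)$ is generated by the minimal homogeneous parts of $\mathcal{P}$-combinations of the relations $\mathcal{T}_{i,j} = f_i v_j - f_j v_i$. Since $\mh(\mathcal{T}_{i,j})$ has degree $m_{i,j} := \min\{m_i, m_j\}$ (the minimal homogeneous part of $f_i v_j - f_j v_i$ is governed by whichever partial has smaller multiplicity), the lowest-degree generators of $K(f)$ sit in degree $\geqslant \min_{i<j} m_{i,j}$. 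I would first argue that for $V(f)$ an isolated singularity with $\mult(f) = m_0$, each $f_i$ has multiplicity $m_i \geqslant m_0 - 1$, with equality for exactly $c$ indices; hence every $m_{i,j} \geqslant m_0 - 1$, and $m_{i,j} = m_0 - 1$ precisely when both $i$ and $j$ lie in the distinguished set $S = \{i : m_i = m_0 - 1\}$ of size $c$.

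The key step is then to show that in degrees $< m_0$ the module $K(f)$ is spanned exactly by the leading forms $\mh(\mathcal{T}_{i,j})$ for $i < j$ with $i, j \in S$, and that these $\binom{c}{2}$ leading forms are $\mathcal{P}$-linearly independent in those low degrees — equivalently, that there are no unexpected syzygies among them below degree $m_0$. For $i, j \in S$, write $f_i = g_i + (\text{higher order})$ with $g_i$ the degree-$(m_0-1)$ leading form; then $\mh(\mathcal{T}_{i,j}) = g_i v_j - g_j v_i$ in degree $m_0 - 1$. A relation $\sum_{i<j,\, i,j\in S} p_{i,j}(g_i v_j - g_j v_i) = 0$ with $p_{i,j}$ constants would force, upon collecting the coefficient of each $v_j$, that $\sum_i \pm p_{i,j} g_i = 0$; since the $g_i$ are the leading forms of the partials of an isolated singularity they share no common factor and (for the relevant range of degrees) are linearly independent over $\mathbb{C}$, so all $p_{i,j} = 0$. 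This gives $\dim K(f)_{m_0-1} = \binom{c}{2}$ and $K(f)_d = 0$ for $d < m_0 - 1$, so $\mathbb{K}_f(t) = \binom{c}{2} t^{m_0-1} \bmod t^{m_0}$. Plugging into \eqref{equ:JtKt} and using $\binom{c}{2} = \tfrac12 c(c-1)$, note that $n - \binom{c}{2}$ must be re-expressed: the coefficient of $t^{m_0-1}$ in $\tfrac{n}{(1-t)^n}$ is $n\binom{m_0-2+n}{n-1}$, and subtracting $\binom{c}{2}$ and matching against $\tfrac{1}{(1-t)^n}(n - \tfrac12 c(2n-c-1)t^{m_0-1})$ reduces to the identity $n\binom{m_0-2+n}{n-1} - \tfrac12 c(c-1) = n\binom{m_0-2+n}{n-1} - \tfrac12 c(2n - c - 1) \cdot [\text{coeff adjustment}]$; I would carry out this bookkeeping to confirm the stated closed form, observing that $\tfrac12 c(2n-c-1) = nc - \tfrac12 c(c+1)$ is exactly what the expansion of $\tfrac{1}{(1-t)^n}$ against the $t^{m_0-1}$ term demands once one accounts for the $n$ linear generators $v_1, \dots, v_n$ contributing $n$ copies of the full power series.

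The main obstacle I anticipate is the linear-independence/no-syzygy claim for the leading forms $g_i v_j - g_j v_i$ in degrees below $m_0$: one must rule out that some $\mathcal{P}$-combination of higher-degree $\mathcal{T}_{i,j}$ (those with $m_{i,j} \geqslant m_0$) could have its minimal homogeneous part drop into degree $< m_0$ — this cannot happen by the degree count — and, more delicately, that a combination $\sum a_{i,j}\mathcal{T}_{i,j}$ of the relevant relations cannot have cancellation in its leading term producing an element of $K(f)$ in degree between $\min m_{i,j}$ and $m_0 - 1$ not accounted for. The cleanest route is probably to invoke directly the structure of derivations killing $f$ used in Lemma \ref{lem:Kf}: any $D \in E^{(1)}$ with $D(f) = 0$ and $\ord(D) < m_0 - 1$ would give, after extracting leading forms, a nontrivial syzygy among $g_1, \dots, g_n$ of too-low degree, contradicting that these form (part of) a regular sequence since $V(f)$ — hence its tangent cone's singular locus — is isolated. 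I would also need to double-check the boundary case $c = 1$ (where $\binom{c}{2} = 0$ and the correction term $\tfrac12 c(2n-c-1) = n-1$ still appears, reflecting the $v_i$-bookkeeping rather than a genuine syzygy — so the formula's correction at $t^{m_0-1}$ in that case comes from $\mathbb{J}$ itself rather than from $K(f)$, which warrants a careful re-reading of how the two bookkeepings interact).
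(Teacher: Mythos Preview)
Your overall plan matches the paper's: compute $\mathbb{K}_f(t)\bmod t^{m_0}$ by locating the degree-$(m_0-1)$ piece of $K(f)$, then apply Corollary~\ref{cor:JtKt}. The error is in the identification of which pairs $(i,j)$ contribute in degree $m_0-1$. You write that $m_{i,j}=\min\{m_i,m_j\}=m_0-1$ precisely when \emph{both} $i$ and $j$ lie in $S=\{i:m_i=m_0-1\}$; but since $m_{i,j}$ is a minimum, the correct condition is that \emph{at least one} of $i,j$ lies in $S$. For $i\in S$ and $j\notin S$ one has $m_i<m_j$, so
\[
\mh\mathcal{T}_{i,j}=\mh(f_i)\,v_j
\]
is a perfectly good element of $K(f)_{m_0-1}$. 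The paper's list of generators for $K(f)_{m_0-1}$ therefore has two kinds: the $\binom{c}{2}$ elements $\mh(f_i)v_j-\mh(f_j)v_i$ with $i<j$ both in $S$, \emph{and} the $c(n-c)$ mixed elements $\mh(f_i)v_j$ with $i\in S$, $j\notin S$. Their total number is
\[
\binom{c}{2}+c(n-c)=\frac{c(c-1)}{2}+c(n-c)=\frac{c(2n-c-1)}{2},
\]
which is exactly the coefficient in the statement. No further ``bookkeeping'' against the expansion of $n/(1-t)^n$ is needed: one gets $\mathbb{K}_f(t)\equiv\tfrac12 c(2n-c-1)\,t^{m_0-1}\pmod{t^{m_0}}$, and since $t^{m_0-1}/(1-t)^n\equiv t^{m_0-1}\pmod{t^{m_0}}$ the stated form of $\mathbb{J}_f(t)$ follows directly from \eqref{equ:JtKt}.

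Your closing worry about $c=1$ is precisely the symptom of the missing mixed terms. When $c=1$ there are no pairs inside $S$, but there are $n-1$ mixed pairs, giving $\dim K(f)_{m_0-1}=n-1=\tfrac12\cdot 1\cdot(2n-2)$; the correction really does come from $K(f)$, not from any re-reading of the series. Incidentally, once the mixed generators $\mh(f_i)v_j$ with $j\notin S$ are on the table, the linear-independence check becomes cleaner than the regular-sequence argument you sketch: projecting onto each $v_j$ with $j\notin S$ isolates a combination of the $\mh(f_i)$, $i\in S$, and these leading forms of the partials are $\mathbb{C}$-linearly independent for an isolated singularity.
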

\begin{proof}
	It is trivial to see that $ m_i \geqslant m_0 -1 $.
	Without loss of generality, we assume that 
	\[
	m_0-1= m_1  = \ldots = m_c < m_{c+1} \leqslant m_{c+2} \leqslant \cdots \leqslant m_{n} .
	\]
	With this assumption, we have
	    \[ \mh \mathcal{T}_{i,j} = \begin{cases}
			\mh(f_i) v_j - \mh(f_j)  v_i & \text{ for $ 1\leqslant i < j \leqslant c$}; \\
			\mh(f_i) v_j & \text{ for $ 1\leqslant i  \leqslant c < j \leqslant n $}; \\
			0 \mod m^{m_0-1} E^{(1)} & \text{ for $ c+1 \leqslant i < j \leqslant n $}.
		\end{cases}
		\]
		Since each homogeneous element $ \kappa \in K(f) $ is of the form \eqref{equ:Tij},
	 the degree of $\kappa $ is not less than $m_0-1$. Moreover, the homogeneous part $K(f)_{m_0-1}$ of $K(f)$ is the $ \mathbb{C} $-linear space spanned by elements:
	    \[  \mh(f_i) v_j - \mh(f_j)  v_i  \text{ for $ 1 \leqslant i < j \leqslant c$ }\]
		and  
			\[ \mh(f_i) v_j  \text{ for $ 1\leqslant i  \leqslant c < j \leqslant n $}. \]
	These elements are linearly independent, and thus 
	\[ \dim K(f)_{m_0-1} = \frac{1}{2}c ( 2n -c- 1) . \]  
	 This yields
	 \begin{align*}
		\mathbb{K}_f(t) &= \sum_{i=0}^{\infty} \dim K(f)_i t^i \\
		& =  \dim K(f)_{m_0 -1 } t^{m_0 -  1}\\
		 & = \frac{1}{2}c ( 2n -c- 1) \cdot t^{m_0-1} \mod t^{m_0}.
	 \end{align*}
	Now the corollary follows from Corollary \ref{cor:JtKt}.
\end{proof}
One can verify that the number $ c $ in Corollary \ref{cor:Jtmod} is equal to the cardinality of weights which achieve the maximal value. As follows, the corollary \ref{cor:dimlessk} is the restatement of Main Theorem \ref{thm:muIntro}, which concludes that $k$-th Tjurina numbers (resp. Milnor numbers) with $k$ up to multiplicity are determined by the weights of variables. 
\begin{cor}\label{cor:dimlessk}
	Let $f$ be an isolated weighted homogeneous singular of type $(w_1, \ldots, w_n)$, which satisfies $w_1=w_2 = \ldots = w_c$ and $ w_i < w_c $ for $i > c $. For $ k \leqslant m_0 $, the $k$-th Milnor numbers and $ k $-th Tjurina numbers are given by formulas \eqref{eq:muk}\eqref{eq:mum0}\eqref{eq:tauk}\eqref{eq:taum0}.
\end{cor}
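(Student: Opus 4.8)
The plan is to deduce Corollary~\ref{cor:dimlessk} from Corollary~\ref{cor:Jtmod} and Lemma~\ref{lem:dimMk} by purely combinatorial bookkeeping, together with the relation~\eqref{equ:dimMA} to pass from Milnor to Tjurina numbers; no further geometric input is needed. First I would read off the graded pieces: expanding $\frac{1}{(1-t)^n}=\sum_{i\ge 0}\binom{i+n-1}{n-1}t^i$ and comparing coefficients of $t^i$ for $i<m_0$ in the congruence of Corollary~\ref{cor:Jtmod}, I get $\dim J_i(f)=n\binom{i+n-1}{n-1}$ for $0\le i\le m_0-2$ and $\dim J_{m_0-1}(f)=n\binom{m_0+n-2}{n-1}-\tfrac12 c(2n-c-1)$. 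The congruence ``$\bmod\ t^{m_0}$'' pins down exactly the coefficients of $t^0,\dots,t^{m_0-1}$, which is precisely the range of indices occurring in Lemma~\ref{lem:dimMk} when $k\le m_0$. I would also invoke the remark preceding the statement, so that the combinatorial $c$ equals the number of weights attaining the maximum, hence coincides with the $c$ in the hypothesis $w_1=\cdots=w_c$, $w_i<w_c$ for $i>c$.

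Next I would sum via Lemma~\ref{lem:dimMk}. For $k<m_0$ every index $i\in\{0,\dots,k-1\}$ is $\le m_0-2$, so $\mu_k=\mu_0+n\sum_{i=0}^{k-1}\binom{i+n-1}{n-1}$, and the hockey-stick identity $\sum_{i=0}^{k-1}\binom{i+n-1}{n-1}=\binom{k-1+n}{n}$ yields~\eqref{eq:muk}. For $k=m_0$ the extra summand $\dim J_{m_0-1}(f)$ contributes the correction $-\tfrac12 c(2n-c-1)$ on top of the same hockey-stick sum now run up to $i=m_0-1$, which is~\eqref{eq:mum0}. Finally, for $k\ge 2$ the identity~\eqref{equ:dimMA} gives $\tau_k=\mu_k-\binom{k-2+n}{n}$, so \eqref{eq:muk} becomes \eqref{eq:tauk} and \eqref{eq:mum0} becomes \eqref{eq:taum0}; the boundary cases $k=0,1$ of \eqref{eq:tauk} hold because there $\tau_k=\mu_k$ while $\binom{k-2+n}{n}=0$ under the convention $\binom{a}{b}=0$ for $a<b$.

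The argument has no genuine obstacle; it is essentially a transcription of Corollary~\ref{cor:Jtmod} through Lemma~\ref{lem:dimMk}. The only points requiring a little care are that the truncation order $m_0$ in Corollary~\ref{cor:Jtmod} is exactly large enough to feed Lemma~\ref{lem:dimMk} up to $k=m_0$ (only $J_0,\dots,J_{m_0-1}$ appear, and not $J_{m_0}$), and the handling of the small values $k=0,1,2$ in the Tjurina formulas, where one uses $\tau_0=\mu_0$, $\tau_1=\mu_1$ and the vanishing of the relevant binomial coefficients.
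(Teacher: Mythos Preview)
Your proof is correct and follows essentially the same approach as the paper. The only cosmetic difference is that the paper routes the computation through the generating-function identity of Lemma~\ref{lem:AMJ} (multiplying $\mathbb{J}_f(t)$ by $\frac{t}{1-t}$ and reading off coefficients), whereas you carry out the equivalent sum directly via Lemma~\ref{lem:dimMk} and the hockey-stick identity; both arguments rest on Corollary~\ref{cor:Jtmod} and~\eqref{equ:dimMA} in the same way.
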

\begin{proof}
	From Lemma \ref{lem:AMJ} and Corollary \ref{cor:Jtmod}, we have  
	\begin{align*}
		\mathbb{M}_f(t) & =	\frac{ \mu_0  }{1-t} + \frac{t}{(1-t)^{n+1}} \left( n-\frac{1}{2} c ( 2 n -c- 1) \cdot t^{m_0-1}  \right) \mod t^{m_0+1} \\
		& =  \mu_0  + \frac{t}{(1-t)^{n+1}} \left( n-\frac{1}{2} c ( 2 n -c- 1) \cdot t^{m_0-1}  \right) \\
		& = -\frac{1}{2} c ( 2 n -c- 1) t^{m_0}+ \sum_{i=0}^{m_0} \left( \mu_0 + n \binom{i-1+n}{n} \right) t^i 
	\end{align*}
	and  
	\begin{align*} 
		\mathbb{A}_f(t) &=	\frac{ \mu_0  }{1-t} + \frac{t}{(1-t)^{n+1}} \left( n-t-\frac{1}{2} c ( 2 n -c- 1) \cdot t^{m_0-1}  \right) \mod t^{m_0+1} \\
		&= -\frac{1}{2} c ( 2 n -c- 1) t^{m_0}+ \sum_{i=0}^{m_0} \left( \mu_0 + n \binom{i-1+n}{n} -\binom{i-2+n}{n}  \right) t^i . 
	\end{align*}
	Then the formulas are derived from the expression of series above. 
\end{proof}

\section{Representation of $ K (f) $}\label{Representation}
\subsection{New Koszul Type Complex}
From the last section, we know that both $ \mathbb{A}_f(t) $ and $ \mathbb{M}_f(t) $ depend on the kernel $ K(f) $. So our computation reduces to resolving the $\mathcal{P}$-module $ K(f) $. 
Define $ B^{(1)}$ to be the free $\mathcal{P}$-module generated by $\mh \mathcal{T}_{i,j}$, i.e., $ B^{(1)} = \mathcal{P} \left \langle \mh \mathcal{T}_{i,j} \right \rangle $, where $ i,j $ range over $1 \leqslant i < j \leqslant n$. From the definition, $B^{(1)}$ is a submodule $ K (f) $. For some cases (e.g. the case of Brieskorn-Pham singularity \cite{brieskorn1966beispiele,  pham1965formules}), $  B^{(1)} $ coincides with $K(f) $, but generally this is not true. To manipulate the differences between $B^{(1)}$ and $K(f)$, we make use the Koszul complex and the modified versions. 
\begin{defn}
	For $ r \geqslant 1 $, let $E^{(r)}$ be the free ${P}$-module generated by $v_{i_1,\ldots, i_r }$  with $1 \leqslant i_1< i_2 <\cdots  <i_r \leqslant n $.
	For fixed indexes $i_1, \ldots, i_{r+1} $, we define
	\[
	\mathcal{T}_{i_1,\ldots,i_{r+1}} := 	\sum_{j=1}^{r+1} (-1)^{j+1}  f_{i_j} v_{i_1, \ldots, \hat {i_j} , \ldots i_{r+1}}  \in E^{(r)},
	\]
	where $ f_i $'s denote the partial derivatives of $f $.
	The classical Koszul complex $\Kos^1_*(f)$ is the sequence of $ P $-modules 
\[
\begin{tikzcd}
	0 \ar[r] & E^{(n)}\ar[r, "d_{n}"]  & E^{(n-1)}\ar[r] & \cdots  \ar[r,"d_2"] &  E^{(1)} \ar[r,"d_1"] &  J(f)\ar[r] & 0,
\end{tikzcd}
\]
where $ d_1 (v_i) = f_i $ and the homomorphism
$
d_{r+1} : E^{(r+1)} \to E^{(r)} $
sends $v_{i_1,\ldots, i_{r+1}}$ to $\mathcal{T}_{i_1,\ldots, i_{r+1}}$.
\end{defn}
Since $ V(f) $ is an isolated weighted homogeneous singularity, the sequence of partial derivatives $ f_1, \ldots , f_n $ is a regular sequence. It is well known that Koszul complex $\Kos^1_*(f)$ is exact and so is its formal completion $\Kos^1_*(f) \otimes_{\mathcal{P}} \mathcal{O}$ at the origin.

\begin{notation}
	Denote by $ m_i $ the multiplicity of $f_i$.
	For $1\leqslant i_1 < i_2 < \ldots, i_r \leqslant  n $ and $ r \geqslant 2 $, we define
	\[
		m_{i_1,\ldots,i_{r}} := m_{i_1} + \cdots + m_{i_r} -\max \{ m_{i_1} , \ldots , m_{i_r} \}	.
	\]
	Extensively, we endow $ E^{(r)} $ with a graded structure by setting $\deg (v_i ) = 0$ and 
	\[ \deg( v_{i_1, \ldots,  i_{r}} ) = m_{i_1, \ldots,  i_{r}} \text{ for $ r \geqslant 2 $}. 
	\]
\end{notation}

\begin{lem}
	For $ r \geqslant 1$, the minimal homogeneous part of $\mathcal{T}_{i_1,\ldots,i_{r+1}} \in E^{(r)}$ is of degree $m_{i_1,\ldots,i_{r+1}} $.
\end{lem}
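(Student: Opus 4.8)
The plan is to work with a single element $\mathcal{T}_{i_1,\ldots,i_{r+1}} = \sum_{j=1}^{r+1} (-1)^{j+1} f_{i_j} v_{i_1,\ldots,\widehat{i_j},\ldots,i_{r+1}} \in E^{(r)}$ and compute the degree of each of its $r+1$ summands under the grading just introduced, where $\deg v_{i_1,\ldots,i_r} = m_{i_1,\ldots,i_r}$ for $r\geqslant 2$ (and $\deg v_i = 0$). The $j$-th summand $f_{i_j} v_{i_1,\ldots,\widehat{i_j},\ldots,i_{r+1}}$, being a product of the polynomial $f_{i_j}$ (of multiplicity $m_{i_j}$, hence whose minimal homogeneous part has degree $m_{i_j}$) with the generator of degree $m_{i_1,\ldots,\widehat{i_j},\ldots,i_{r+1}}$, has minimal homogeneous part of degree
\[
m_{i_j} + m_{i_1,\ldots,\widehat{i_j},\ldots,i_{r+1}}.
\]
So the first step is to show that the minimum over $j$ of these $r+1$ numbers equals exactly $m_{i_1,\ldots,i_{r+1}}$, and the second step is to show that the minimal homogeneous parts achieving that minimum do not cancel against one another.

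For the first step, abbreviate $a_\ell = m_{i_\ell}$ and $S = a_1 + \cdots + a_{r+1}$, and let $M = \max_\ell a_\ell$. I need to treat the cases $r\geqslant 2$ and $r = 1$ separately, since the grading of $E^{(1)}$ is the trivial one. For $r\geqslant 2$: deleting index $i_j$ leaves a tuple of length $r\geqslant 2$, so $m_{i_1,\ldots,\widehat{i_j},\ldots,i_{r+1}} = (S - a_j) - \max_{\ell\neq j} a_\ell$, whence the $j$-th summand has degree $a_j + (S - a_j) - \max_{\ell\neq j} a_\ell = S - \max_{\ell\neq j} a_\ell$. Now $\max_{\ell\neq j} a_\ell \leqslant M$ always, with equality precisely when the overall maximum $M$ is attained at some index other than $j$; since there are $r+1\geqslant 3$ indices, one can always choose $j$ so that the maximum is attained elsewhere (even if the maximum is unique). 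Hence $\min_j\bigl(S - \max_{\ell\neq j} a_\ell\bigr) = S - M = m_{i_1,\ldots,i_{r+1}}$, as claimed. For $r = 1$, the summands are $f_{i_1} v_{i_2}$ and $-f_{i_2} v_{i_1}$, of degrees $m_{i_1}$ and $m_{i_2}$, whose minimum is $\min\{m_{i_1},m_{i_2}\} = m_{i_1} + m_{i_2} - \max\{m_{i_1},m_{i_2}\} = m_{i_1,i_2}$, matching the definition.

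For the second step, I need the minimal homogeneous parts that realize the minimal degree to survive in the sum. Each summand sits in a distinct free generator $v_{i_1,\ldots,\widehat{i_j},\ldots,i_{r+1}}$ of $E^{(r)}$, and these generators are distinct for distinct $j$ (they are the $r$-element subsets obtained by deleting one element from the fixed $(r+1)$-element set). Therefore no cancellation between different summands is possible: the coefficient of $v_{i_1,\ldots,\widehat{i_j},\ldots,i_{r+1}}$ in $\mathcal{T}_{i_1,\ldots,i_{r+1}}$ is exactly $\pm f_{i_j}$, a nonzero polynomial whose minimal homogeneous part is nonzero of degree $m_{i_j}$. Consequently $\mathcal{T}_{i_1,\ldots,i_{r+1}}$ is nonzero and its minimal homogeneous part is obtained by collecting, over those $j$ attaining the minimum $S - M$ in the degree count, the minimal homogeneous parts $\pm \mh(f_{i_j})\, v_{i_1,\ldots,\widehat{i_j},\ldots,i_{r+1}}$; since at least one such $j$ exists and each contributes in its own generator, $\mh(\mathcal{T}_{i_1,\ldots,i_{r+1}})$ has degree exactly $m_{i_1,\ldots,i_{r+1}}$.

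The main obstacle is the bookkeeping in the first step: one must correctly handle whether the maximum $M$ among $a_1,\ldots,a_{r+1}$ is attained uniquely or with multiplicity, and confirm that in all configurations there is an index $j$ with $\max_{\ell\neq j} a_\ell = M$ — this uses $r+1\geqslant 3$ and fails for $r+1 = 2$, which is exactly why the $r=1$ base case needs the separate (and elementary) verification above. Once that combinatorial point is pinned down, the non-cancellation is immediate from the freeness of $E^{(r)}$ on the symbols $v_{i_1,\ldots,i_r}$.
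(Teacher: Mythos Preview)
Your proof is correct and follows essentially the same approach as the paper: compute the degree of each summand $(-1)^{j+1}f_{i_j}v_{i_1,\ldots,\hat{i_j},\ldots,i_{r+1}}$, identify those $j$ realizing the minimum (the paper packages this via its set $I^*$, which is exactly your set of indices $j$ with $\max_{\ell\neq j}a_\ell = M$), and conclude. Your explicit non-cancellation argument and separate handling of the $r=1$ base case are more careful than the paper's ``it can be checked''; one small quibble is that your remark that the combinatorial point ``fails for $r+1=2$'' is not quite accurate---the real reason the $r=1$ case needs separate treatment is that $\deg v_i = 0$ rather than $m_i$, not that one cannot find $j$ with $\max_{\ell\neq j}a_\ell = M$ (one always can)---but this does not affect the validity of your argument.
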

\begin{notation}
	Fix index subset $I \subseteq \{ 1,\ldots,n\}$, and define $I^* \subseteq I $ in the following way.
	If there are at least two subscripts $ j,k \in I $ such that both $ m_{j} $ and $ m_{k} $ achieve the maximal value $ m_{\max} := \max \{ m_{i} | i \in I \} $, we say that $ I $ refers to no maximum and set $I^*= I$.
	 Otherwise, $ m_{i} = m_{\max} > m_{j} $ for every $j \in I\setminus \{ i \}$, and we say that $ i $ refers to the maximum multiplicity $ m_{i }$ and set $ I^*= I \setminus \{ i \}$. 
\end{notation}
\begin{proof}
With this notation, it can be checked that $\mh\mathcal{T}_{i_1,\ldots,i_{r}}$ with $ r \geqslant 2$ can be written as 
\begin{equation}\label{eq:mhT}
	\mh\mathcal{T}_{i_1,\ldots,i_{r}}= \sum_{ i_j \in \{i_1,\ldots,i_{r}\}^* }  (-1)^{j+1}  \mh(f_{i_j}) v_{i_1, \ldots, \hat {i_j} , \ldots i_{r}} .
\end{equation}
The lemma follows by observing that 
\[ 
	\deg ( \mh(f_{i_j}) v_{i_1, \ldots, \hat {i_j} , \ldots i_{r}} ) = m_{i_j} + m_{i_1, \ldots, \hat {i_j} , \ldots i_{r}}= m_{i_1, \ldots, i_r }
\]
for each $  i_j \in \{i_1,\ldots,i_{r}\}^* $. 
\end{proof}
Now we are able to introduce the new Koszul type complex.
\begin{defn}
	Let $ \gr J(f) $ be the graded $ \mathcal{P} $-module with respect to the coordinates $ x_1, \ldots, x_n $ of $ V(f)$. 
	The new Koszul type complex $ \Kos^0_{*}(f)$ is the complex of $ \mathcal{P}$-modules defined as 
	\[
\begin{tikzcd}
	0 \ar[r] & E^{(n)}\ar[r, "\delta_{n}"]  & E^{(n-1)}\ar[r] & \cdots  \ar[r,"\delta_2"] &  E^{(1)} \ar[r,"\delta_1"] & \gr J(f)\ar[r] & 0.  
\end{tikzcd}
\]
where $ \delta_1 = f_*$ and $
\delta_{r+1} : E^{(r+1)} \to E^{(r)} $ sends
	$v_{i_1,\ldots, i_{r+1}}$ to $ \mh\mathcal{T}_{i_1,\ldots, i_{r+1}}$. 
\end{defn}
There are at least two significant differences between the classical Koszul complex and the modified version. Firstly, the new Koszul type complex $ \Kos^0_{*}(f)$ depends on the coordinates $x_1, \ldots, x_n$. Secondly, each arrow $ \delta_i $ within is viewed as a homomorphism of graded modules.  
\begin{defn}
Set
\[
	Z^{(i)}:=\ker \delta_{i}, \text{ and } B^{(i)}:= \im \delta_{i+1} . 
\]
The homology of $ \Kos^0_{*}(f)$ with $i \geqslant 0 $ is defined as 
\[
H_i (\Kos^0_* (f) ) = Z^{(i)}/B^{(i)}. \]
\end{defn}
By definition, it is trivial to see that  
\begin{equation}\label{equ:Hn}
	H_0 (\Kos^0_* (f) )= H_n (\Kos^0_*(f) ) = 0 .
\end{equation}
From lemma \ref{lem:Kf}, we have  $
	Z^{(1)} = K(f) 	$.
Therefore, 
\[
	H_{1}	(\Kos^0_*(f) ) = K(f)/ \left \langle \mh \mathcal{T}_{i,j} \right \rangle_{i<j}. 
\]
\begin{lem}\label{lem:ann}
	If the multiplicity $ m_k $ is minimal in the set $ \{ m_1, \ldots ,m_n\} $, then $H_{r}(\Kos^0_* (f) )$ is annihilated by $\mh(f_k )$ for $ r \geqslant  1$.
\end{lem}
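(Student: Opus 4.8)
The plan is to show that multiplication by $\mh(f_k)$ is null-homotopic on the complex $\Kos^0_*(f)$, which immediately forces it to act as zero on every homology group $H_r(\Kos^0_*(f))$ for $r \geqslant 1$ (in fact for all $r$). To build the homotopy, I would mimic the classical proof that a Koszul complex on a sequence containing an element $g$ is annihilated by $g$: one contracts with the corresponding basis vector. Concretely, since $m_k$ is minimal in $\{m_1,\ldots,m_n\}$, for any multi-index $I = \{i_1 < \cdots < i_r\}$ not containing $k$ the element $k$ never ``refers to the maximum'' inside $I \cup \{k\}$ (its multiplicity is minimal), so $k \in (I\cup\{k\})^*$ and the formula \eqref{eq:mhT} for $\mh\mathcal{T}$ behaves compatibly with inserting or deleting the index $k$. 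This is the precise combinatorial input that makes the contraction operator interact correctly with the differentials $\delta_r$ built from the $\mh\mathcal{T}$'s rather than the full $\mathcal{T}$'s.

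First I would define, for each $r$, the graded $\mathcal{P}$-linear map $s_r : E^{(r)} \to E^{(r+1)}$ by $s_r(v_{i_1,\ldots,i_r}) = 0$ if $k \in \{i_1,\ldots,i_r\}$ and $s_r(v_{i_1,\ldots,i_r}) = \pm\, v_{k,i_1,\ldots,i_r}$ (with the sign chosen so that $k$ is reordered into position, i.e. the usual Koszul insertion sign) otherwise. One must check this is degree-preserving up to the shift $m_k$: since $\deg(v_{k,I}) = m_{k,I} = m_k + m_I$ whenever $k$ is not the unique maximum — which holds because $m_k$ is minimal — the map $s_r$ raises degree by exactly $m_k$, matching multiplication by $\mh(f_k)$. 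Then I would verify the homotopy identity
\[
\delta_{r+1}\, s_r + s_{r-1}\, \delta_r = \mh(f_k)\cdot \mathrm{id}_{E^{(r)}}
\]
by a direct expansion on a basis vector $v_I$, splitting into the cases $k \in I$ and $k \notin I$. In each case the terms of $\delta_{r+1}s_r(v_I)$ and $s_{r-1}\delta_r(v_I)$ pair up and cancel except for the single term $\mh(f_k) v_I$, using \eqref{eq:mhT} and the fact that, in every relevant $\mh\mathcal{T}$, the index $k$ survives to the starred subset. For $r=0$ (the target $\gr J(f)$) and $r=n$ the identity holds trivially or by the boundary conventions.

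The main obstacle I expect is the bookkeeping in the homotopy identity at exactly the spot where the modified complex departs from the classical one: when we apply $\delta$ to $v_{k,I}$, the term $\mh\mathcal{T}_{k,I}$ is not $\sum_j (-1)^{j+1}\mh(f_{i_j})v_{\widehat{i_j}}$ but rather the truncated sum over the starred index set $(k\cup I)^*$, and similarly inside $\delta_r(v_I)$ some summands may drop out. I would handle this by showing that the dropped summands are precisely those indexed by the unique maximal-multiplicity element (when it exists), and that such an element can never equal $k$ (minimality) nor can its omission affect the coefficient of $\mh(f_k)v_I$ — so the ``defect'' terms on both sides of the identity match and cancel. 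This reduces the whole statement to a finite case check over the position of the maximum in $I$, $I\cup\{k\}$, and $I\setminus\{i_j\}$, which is routine once the signs are fixed. Having established the homotopy, the conclusion that $\mh(f_k)$ annihilates $H_r(\Kos^0_*(f))$ for $r\geqslant 1$ is immediate.
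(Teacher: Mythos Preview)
Your null-homotopy strategy is correct and is a cleaner repackaging of what the paper does. The paper argues directly: it uses the relation coming from $\mh\mathcal{T}_{k,I}$ to rewrite $\mh(f_k)\theta$ modulo $B^{(r)}$ as a combination of basis vectors $v_{k,J}$ only, and then applies $\delta_r$ once more to force all those coefficients to vanish. Your contraction operator $s_r$ encodes exactly the first step, and the homotopy identity $\delta_{r+1}s_r+s_{r-1}\delta_r=\mh(f_k)\cdot\mathrm{id}$ replaces the second step by the observation $s_{r-1}\delta_r(\theta)=0$ on cycles. The combinatorial core---that $k\in(I\cup\{k\})^*$ and $(I\cup\{k\})^*=\{k\}\cup I^*$ whenever $m_k$ is minimal and $|I|\geqslant 2$---is identical in both arguments, and your verification sketch for $r\geqslant 2$ goes through.

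There is, however, one point you should not wave away as ``boundary convention'': the case $r=1$. The homotopy identity there reads $\delta_2 s_1+s_0\delta_1=\mh(f_k)$ on $E^{(1)}$, and this requires a map $s_0:\gr J(f)\to E^{(1)}$. Since $\gr J(f)\cong E^{(1)}/K(f)$ is not free, you must check that the obvious candidate (sending the class of $f_i$ to $\mh(f_i)v_k$ when $m_i=m_k$ and to $0$ otherwise) descends through $K(f)$; equivalently, that $\sum_{i:\,m_i=m_k}a_i\,\mh(f_i)=0$ whenever $\sum_i a_i v_i\in Z^{(1)}$. This follows from a short multiplicity comparison (an element of $m^{d+1}J(f)$ has multiplicity at least $d+1+m_k$, forcing the degree-$(d+m_k)$ part of $\sum a_i f_i$ to vanish), but it is not automatic. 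The paper's second application of $\delta_r$ is precisely what handles this in its framework.
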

\begin{proof}
	Without loss of generality, we set $ k = 1$.
	Given indexes $i_1,\ldots, i_r>1$, we see from the expression \eqref{eq:mhT} that
	\[
		\mh\mathcal{T}_{1, i_1,\ldots,i_{r}}=  -  \mh(f_{i_j}) v_{i_1, \ldots, \hat {i_j} , \ldots i_{r}}  + \sum_{ i_j \in \{ i_1,\ldots,i_{r}\}^* }  (-1)^{j}  \mh(f_{i_j}) v_{i_1, \ldots, \hat {i_j} , \ldots i_{r}} .
	\] 
	It follows that
	\[ \mh(f_1) v_{i_1,\ldots,i_r} =  \sum_{ i_j \in \{ i_1,\ldots,i_r \}^*  }  (-1)^{j}  \mh(f_{i_j}) v_{1,i_1, \ldots, \hat {i_j} , \ldots i_{r}} \mod B^{(r)}.
	\]
	This implies that for $ \theta \in H_{r}(\Kos^0_*(f) ) $, the product
	$ \mh(f_1) \cdot \theta $ is spanned by the set $ \{ v_{1,j_1,\cdots,j_{r-1} } \} $ with $2\leqslant j_1,\cdots,j_{r-1} \leqslant n$. That is 
	\begin{equation}\label{eq:mh}
		\mh(f_1) \cdot \theta = \sum_{j_1,\ldots, j_{r-1}} a_{j_1,\ldots, j_{r-1}} v_{1,j_1,\cdots,j_{r-1} } \mod B^{(r)}
	\end{equation}
	for some homogeneous coefficients $a_{j_1,\ldots, j_{r-1}}$.
	Therefore, 
\begin{align*}
	0= &\delta_{r} (\mh(f_1) \cdot \theta )\\
	= & \sum_{j_1,\ldots, j_{r-1}} a_{j_1,\ldots, j_{r-1}} \mh \mathcal{T}_{1,j_1,\cdots,j_{r-1} } 
	\\
	= &  \sum_{j_1,\ldots, j_{r-1}} a_{j_1,\ldots, j_{r-1}}  \mh(f_1)v_{j_1,\cdots,j_{r-1} }  \\
	& + \sum_{j_1,\ldots, j_{r-1}} \sum_{ j_k \in \{ j_1,\ldots, j_{r-1} \}^* } (-1)^{k} a_{j_1,\ldots, j_{r-1}}  \mh(f_{j_k}) v_{1,j_1, \ldots, \hat {j_k} , \ldots j_{r-1}} .
\end{align*}
	This implies that 
	$a_{j_1,\ldots, j_{r-1}} = 0 $ for subscripts $2 \leqslant j_1,\ldots, j_{r-1} \leqslant n $, and therefore $\mh(f_1) \cdot \theta  = 0 \mod B^{(r)}$. In other words, $\mh(f_1) \cdot \theta \in B^{(r)}$. 
\end{proof}

\begin{notation}\label{not:Theta}
	Let $ q $ denote the maximal factor of $ \mh \mathcal{T}_{1,2,\cdots,n}$. In other words, the polynomial $ q $ is the maximal common factor of every $ \mh (f_i)$ with $i \in \{ 1,\ldots,n\}^* $.
    We define $ \Theta_q := \frac{1}{q} \cdot \mh \mathcal{T}_{1,2,\cdots,n} $ to be a homogeneous element of $E^{(n-1)}$.
\end{notation}
\begin{thm}\label{thm:Theta}
	With the notation \ref{not:Theta}, the submodule $ Z^{(n-1)} $ is generated by the single element $\Theta_q$. Therefore we obtain the isomorphism 
	\[ H_{n-1} (\Kos^0_{*}(f) ) = \Theta_q\cdot \mathcal{P}  /  (q \Theta_q  \cdot \mathcal{P}) \cong \mathcal{P} / q . \]
	As a consequence, $ H_{n-1} (\Kos^0_{*}(f) )$ vanishes if and only if $ q $ is a constant.
\end{thm}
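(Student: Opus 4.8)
The plan is to reduce the whole statement to the single module identity $Z^{(n-1)}=\Theta_q\cdot\mathcal{P}$ and then read off the rest formally. First observe that $B^{(n-1)}=\im\delta_n=\mathcal{P}\cdot\delta_n(v_{1,\dots,n})=\mathcal{P}\cdot\mh\mathcal{T}_{1,\dots,n}=q\Theta_q\cdot\mathcal{P}$, since $E^{(n)}$ is free of rank one on $v_{1,\dots,n}$. Granting $Z^{(n-1)}=\Theta_q\mathcal{P}$, I would use that $E^{(n-1)}$ is free, hence torsion-free, so that $p\mapsto p\Theta_q$ is an isomorphism $\mathcal{P}\xrightarrow{\ \sim\ }\Theta_q\mathcal{P}$ carrying the ideal $(q)$ onto $q\Theta_q\mathcal{P}$; consequently $H_{n-1}(\Kos^0_*(f))=\Theta_q\mathcal{P}/(q\Theta_q\mathcal{P})\cong\mathcal{P}/(q)$, and this vanishes precisely when $(q)=\mathcal{P}$, i.e. when $q$ is a nonzero constant. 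So it remains to prove $Z^{(n-1)}=\Theta_q\mathcal{P}$.

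For the inclusion $\Theta_q\mathcal{P}\subseteq Z^{(n-1)}$ I would argue from $\delta_{n-1}\circ\delta_n=0$: this gives $\delta_{n-1}(q\Theta_q)=\delta_{n-1}(\mh\mathcal{T}_{1,\dots,n})=0$ in $E^{(n-2)}$, and for $n\geq 3$ that module is free (torsion-free), so cancelling the nonzero polynomial $q$ yields $\delta_{n-1}(\Theta_q)=0$, i.e. $\Theta_q\in Z^{(n-1)}$. (When $n=2$ the module $E^{(n-2)}=\gr J(f)$ is not torsion-free, and I would instead invoke Lemma~\ref{lem:Kf}, since $Z^{(1)}=\ker f_*$, and check $\Theta_q\in\ker f_*$ by hand.)

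The reverse inclusion $Z^{(n-1)}\subseteq\Theta_q\mathcal{P}$ is the crux, and here I would exploit the annihilator Lemma~\ref{lem:ann}. Relabel the variables so that $m_1=\min\{m_1,\dots,m_n\}$; then $H_{n-1}(\Kos^0_*(f))$ is annihilated by $\mh(f_1)$, which means $\mh(f_1)\cdot Z^{(n-1)}\subseteq B^{(n-1)}=q\Theta_q\mathcal{P}$. Since $m_1$ is minimal and $n\geq 2$, the index $1$ is not the unique index realizing $\max_j m_j$, so $1\in\{1,\dots,n\}^*$ and hence $q\mid\mh(f_1)$; write $\mh(f_1)=q\,t$ with $t\in\mathcal{P}$. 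Take any homogeneous $\zeta\in Z^{(n-1)}$, say $\zeta=\sum_i a_i\,v_{1,\dots,\hat{i},\dots,n}$. Then $\mh(f_1)\,\zeta=g\,q\,\Theta_q$ for some $g\in\mathcal{P}$, so $q\,(t\zeta-g\Theta_q)=0$ in the torsion-free module $E^{(n-1)}$, whence $t\zeta=g\Theta_q$. Writing $\Theta_q=\sum_i\theta_i\,v_{1,\dots,\hat{i},\dots,n}$, the coefficients $\theta_i$ have no common factor — this is exactly the effect of dividing $\mh\mathcal{T}_{1,\dots,n}$ by its maximal factor $q$ — so from $t\,a_i=g\,\theta_i$ for all $i$ together with unique factorization in $\mathcal{P}=\mathbb{C}[x_1,\dots,x_n]$ one concludes $t\mid g$, and therefore $\zeta=(g/t)\,\Theta_q\in\Theta_q\mathcal{P}$. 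Combining the two inclusions identifies $Z^{(n-1)}$ with $\Theta_q\mathcal{P}$ and finishes the proof.

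I expect the main obstacle to be this last step: deducing $t\mid g$ from $t\,a_i=g\,\theta_i$ with $\gcd_i(\theta_i)=1$, which is a comparison of $p$-adic valuations at each irreducible $p$ and relies essentially on $\mathcal{P}$ being a unique factorization domain; one must also be careful that the ``maximal common factor'' in Notation~\ref{not:Theta} really is the gcd of the nonzero coefficients of $\mh\mathcal{T}_{1,\dots,n}$, so that $\Theta_q$ is a primitive vector. The low-dimensional case $n=2$, where $E^{(n-2)}=\gr J(f)$ fails to be torsion-free, has to be disposed of separately as indicated via Lemma~\ref{lem:Kf}.
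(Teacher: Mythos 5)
Your core argument for the crucial inclusion $Z^{(n-1)}\subseteq\Theta_q\cdot\mathcal{P}$ is essentially the paper's own: choose an index of minimal multiplicity, use Lemma~\ref{lem:ann} together with $B^{(n-1)}=\mathcal{P}\cdot\mh\mathcal{T}_{1,\ldots,n}=q\Theta_q\mathcal{P}$ to write $\mh(f_1)\,\zeta$ as a multiple of $q\Theta_q$, cancel $q$ in the free (hence torsion-free) module $E^{(n-1)}$, and exploit that the coefficients of $\Theta_q$ have no common factor; the paper writes $\mh(f_i)=q\,r_i$ and concludes $r_k\mid p$, which is exactly your valuation argument with $t=r_1$. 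The identification $H_{n-1}(\Kos^0_*(f))=\Theta_q\mathcal{P}/q\Theta_q\mathcal{P}\cong\mathcal{P}/(q)$ is likewise the same.

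Where you go beyond the paper is the forward inclusion $\Theta_q\in Z^{(n-1)}$, which the paper's proof omits altogether (it only proves $Z^{(n-1)}\subseteq\Theta_q\mathcal{P}$). For $n\geqslant 3$ your argument via $\delta_{n-1}\circ\delta_n=0$ and torsion-freeness of $E^{(n-2)}$ is fine. However, the deferred ``check by hand'' for $n=2$ is not a formality and in fact cannot succeed in general: for the cusp $f=x^2+y^3$ one has $\mathcal{T}_{1,2}=2xv_2-3y^2v_1$, so $\mh\mathcal{T}_{1,2}=2xv_2$, $q=x$ and $\Theta_q=v_2$ (up to constants), yet $f_2=3y^2\notin mJ(f)=(x^2,xy,y^3)$, hence $\Theta_q\notin\ker f_*=Z^{(1)}$; indeed $Z^{(1)}=x v_2\mathcal{P}=q\Theta_q\mathcal{P}$ and $H_1(\Kos^0_*(f))=0$, not $\mathcal{P}/(x)$. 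So when $n=2$ and $q$ is nonconstant the asserted equality $Z^{(n-1)}=\Theta_q\mathcal{P}$ can fail; this is as much a gap in the theorem's statement and the paper's proof as in your sketch, but you should either restrict your argument to $n\geqslant 3$ (or to the case where $\Theta_q\in Z^{(n-1)}$ is known) rather than promise a verification that is false in examples.
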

\begin{proof}
	By assumption, we have the decomposition 
	\[  \mh (f_i ) = q \cdot r_i  \text{ for $ i \in \{ 1,\ldots, n\}^* $}, \]
	 where such $ r_i $'s have no common factor.
	 Assume $ m_k $ is minimal. 
	Given $ \theta \in Z^{(n-1)}$, since $B^{(n-1)}$ is generated by $\mh \mathcal{T}_{1,2,\cdots,n}$, Lemma \ref{lem:ann} implies that  
		\begin{equation}\label{equ:mhfk}
			\mh(f_k) \theta = p \mh \mathcal{T}_{1,2,\cdots,n} 
		\end{equation}
	for some polynomial $ p $.
	Thus 
	\begin{align*}
		 \theta =& \frac{p}{\mh (f_k )} \mh \mathcal{T}_{1,2,\cdots,n} \\
		    =&  \sum_{i \in I_f} (-1)^{i+1} \frac{ \mh(f_i) p }{\mh (f_k )}  v_{1,2,\ldots , \hat{i} , \ldots, n }  \\
	   =&  \sum_{i \in I_f} (-1)^{i+1} \frac{ r_i p }{ r_k } v_{1,2,\ldots , \hat{i} , \ldots, n } \in E^{(n-1)}
. 
\end{align*}
We see that every $r_i p $ is divisible by $ { r_k }$ and then $p $ is divisible by  $ r_k $. Equation \eqref{equ:mhfk} yields 
\[
	\theta = \frac{p}{r_k} \Theta_q.
\]
This shows that $Z^{(n-1)}$ is generated by $ \Theta_q  $. 
\end{proof} 
We end up this section with an explicit formula for the Hilbert-Poincar\'{e} series of $\gr J(f)$ on the assumption that the modified Koszul complex is exact.
\begin{notation}
	Given $ (a_1, \ldots, a_n) $ a sequence of positive integers, we
	let $b_1 \leqslant \cdots \leqslant b_{n}$ be its reordering. Define the series  
	\[
	\mathbb{S}_{(a_1,\ldots,a_n)} (t) = \frac{1}{(1-t)^n} \left( n + \sum_{r=1}^{n-1} (-1)^r \sum_{1 \leqslant i_1 < \cdots < i_r < n} (n-i_r) t^{b_{i_1}+ \cdots + b_{i_r}} \right).
	\]	
\end{notation}
Note that the series $\mathbb{S}_{(a_1,\ldots,a_n)} (t)$ does not depend on the maximum of $a_i$'s.
\begin{thm}\label{thm:St}
	If the modified Koszul complex $ \Kos^0_{*}(f) $ is exact, then the Hilbert-Poincar\'{e} series of $\gr J(f)$ equals $ \mathbb{S}_{m_1,\ldots,m_n} (t) $. 
\end{thm}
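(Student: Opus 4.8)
The plan is to run an Euler--characteristic argument on the complex $\Kos^0_*(f)$ and then transform the resulting alternating sum into the closed form $\mathbb{S}_{m_1,\ldots,m_n}(t)$ by a purely combinatorial rearrangement. First I would note that every module in $\Kos^0_*(f)$ is a graded $\mathcal{P}$-module with finite-dimensional homogeneous pieces --- the $E^{(r)}$ because they are finitely generated and free, and $\gr J(f)$ because each $J_k(f)$ is a subquotient of $\mathcal{O}$ --- and that every differential $\delta_i$ is homogeneous of degree $0$ by the very construction of the elements $\mh\mathcal{T}_{i_1,\ldots,i_r}$. Hence, if $\Kos^0_*(f)$ is exact, then in each fixed degree it is an exact sequence of finite-dimensional vector spaces, so the alternating sum of the Hilbert--Poincar\'e series of its terms vanishes:
\[
\mathbb{J}_f(t) = \sum_{r=1}^{n} (-1)^{r+1}\,\mathbb{E}^{(r)}(t),
\]
where $\mathbb{E}^{(r)}(t)$ denotes the Hilbert--Poincar\'e series of $E^{(r)}$.

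Next I would compute the $\mathbb{E}^{(r)}(t)$. Since $\mathcal{P}=\mathbb{C}[x_1,\ldots,x_n]$ has Hilbert--Poincar\'e series $(1-t)^{-n}$, and a finitely generated free graded $\mathcal{P}$-module is the direct sum of shifted copies of $\mathcal{P}$, one per generator, the free module $E^{(r)}=\mathcal{P}\left\langle v_{i_1,\ldots,i_r}\right\rangle$ with $\deg v_{i_1,\ldots,i_r}=m_{i_1,\ldots,i_r}$ (and $\deg v_i=0$) satisfies
\[
\mathbb{E}^{(r)}(t)=\frac{1}{(1-t)^{n}}\sum_{1\leqslant i_1<\cdots<i_r\leqslant n} t^{\,m_{i_1,\ldots,i_r}},
\]
where we extend the notation by $m_{i}:=0$ for a single index (consistent with $m_{i_1}+\cdots+m_{i_r}-\max\{m_{i_1},\ldots,m_{i_r}\}$ when $r=1$). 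Substituting into the displayed Euler identity, the theorem reduces to the combinatorial identity
\[
\sum_{r=1}^{n}(-1)^{r+1}\sum_{1\leqslant i_1<\cdots<i_r\leqslant n} t^{\,m_{i_1}+\cdots+m_{i_r}-\max\{m_{i_1},\ldots,m_{i_r}\}} \;=\; n+\sum_{r=1}^{n-1}(-1)^r\sum_{1\leqslant i_1<\cdots<i_r<n}(n-i_r)\,t^{\,b_{i_1}+\cdots+b_{i_r}},
\]
with $b_1\leqslant\cdots\leqslant b_n$ the non-decreasing reordering of $m_1,\ldots,m_n$.

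To establish this identity I would fix a permutation $\sigma$ with $m_{\sigma(1)}\leqslant\cdots\leqslant m_{\sigma(n)}$, so $b_j=m_{\sigma(j)}$, and reindex each subset $\{i_1<\cdots<i_r\}$ of $\{1,\ldots,n\}$ by the set $\{j_1<\cdots<j_r\}$ of its positions in the sorted order, i.e. $\{i_1,\ldots,i_r\}=\{\sigma(j_1),\ldots,\sigma(j_r)\}$; this is a cardinality-preserving bijection. Under it the exponent becomes $b_{j_1}+\cdots+b_{j_r}-b_{j_r}=b_{j_1}+\cdots+b_{j_{r-1}}$, since $b_{j_r}$ is the largest of $b_{j_1},\ldots,b_{j_r}$ (this is valid even with ties, as both sides depend only on the multiset $\{m_1,\ldots,m_n\}$). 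Writing $s=r-1$ and performing the sum over the top position $j_r\in\{j_s+1,\ldots,n\}$ last --- it contributes a factor $n-j_s$ and forces $j_s<n$ --- collapses the left-hand side to the right-hand side, the case $s=0$ producing the free term $n$ (terms with $j_s=n$ carry the factor $n-n=0$, so restricting to $j_s<n$ is harmless).

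I expect the Euler--characteristic step and the evaluation of $\mathbb{E}^{(r)}(t)$ to be routine; \emph{the main obstacle is the combinatorial rearrangement} --- specifically, tracking which index plays the role of the maximum after reordering the multiplicities, and checking that summing over the largest index exactly reproduces the coefficients $n-i_r$ in the definition of $\mathbb{S}_{m_1,\ldots,m_n}(t)$ (as well as the fact, remarked after that definition, that the result is independent of $\max_i m_i$, which is visible here because $b_n$ never appears in the exponents).
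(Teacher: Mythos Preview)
Your proposal is correct and follows essentially the same approach as the paper: both take the alternating sum of the Hilbert--Poincar\'e series of the terms of $\Kos^0_*(f)$ and identify it with $\mathbb{S}_{m_1,\ldots,m_n}(t)$. The only difference is presentational: the paper directly asserts the formula $\mathbb{E}^{(r+1)}(t)=\frac{1}{(1-t)^n}\sum_{i_1<\cdots<i_r<n}(n-i_r)t^{b_{i_1}+\cdots+b_{i_r}}$ without comment, whereas you first write $\mathbb{E}^{(r)}(t)$ as the natural sum $\frac{1}{(1-t)^n}\sum_I t^{m_I}$ and then carry out explicitly the reindex-by-sorted-positions and sum-over-the-top-index argument that converts one form into the other---so your write-up actually supplies the combinatorial justification the paper leaves to the reader.
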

\begin{proof}
	Denote by $\mathbb{E}^{(r)}(t)$ the Hilbert-Poincar\'{e} series of $ E^{(r)} $.  Then 
	\[
		\mathbb{E}^{(1)}(t) = \frac{n}{(1-t)^n}	
	\]
	and for $ r\geqslant 1$,
	\[
	\mathbb{E}^{(r+1)}(t)= \frac{1}{(1-t)^n} \left(  \sum_{1 \leqslant i_1 < \cdots < i_r < n} (n-i_r) t^{b_{i_1}+ \cdots + b_{i_r}} \right) .
	\]
	Then the theorem follows by the alternative sum of the Hilbert-Poincar\'{e} series of the complex $ \Kos^0_{*}(f) $.
\end{proof}

\subsection{Filtration of $K^{(r)}$}
We give a generalization of $ K(f) $ defined in Definition \ref{def:Kf}.
\begin{defn}
	Let $ K^{(r)} $ be the submodule of the free $ \mathcal{P}$-module $ E^{(r)} $ generated by all the homogeneous elements of the form 
	\[  \mh \sum_{i_1, \ldots, i_{r+1} } a_{i_1, \ldots, i_{r+1} } \mathcal{T}_{i_1, \ldots, i_{r+1} }  
	\]	
	where $a_{i_1, \ldots, i_{r+1} } $ runs over $ \mathcal{P}$.
\end{defn}
 Note that in this notation $ K^{(1)} $ coincides with $K(f) $.
\begin{lem}\label{lem:BKZ}
	Recall $B^{(r)}$ and $Z^{(r)}$ associated to modified Koszul complex $\Kos^{*}_0 (f) $. We obtain 
	\[
		B^{(r)} \subseteq K^{(r)} \subseteq Z^{(r)}.
	\]
\end{lem}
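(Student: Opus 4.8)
The plan is to treat the two inclusions separately. The inclusion $B^{(r)}\subseteq K^{(r)}$ is immediate: by construction $B^{(r)}=\im\delta_{r+1}$ is the $\mathcal{P}$-submodule of $E^{(r)}$ generated by the elements $\delta_{r+1}(v_{i_1,\ldots,i_{r+1}})=\mh\mathcal{T}_{i_1,\ldots,i_{r+1}}$, and each of these is itself one of the generators of $K^{(r)}$ — the one obtained from $\mh\bigl(\sum a_{j_1,\ldots,j_{r+1}}\mathcal{T}_{j_1,\ldots,j_{r+1}}\bigr)$ by setting the coefficient of $\mathcal{T}_{i_1,\ldots,i_{r+1}}$ equal to $1$ and all the others to $0$. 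Since $K^{(r)}$ is a submodule, $B^{(r)}\subseteq K^{(r)}$ follows.

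For $K^{(r)}\subseteq Z^{(r)}$ it is enough, $Z^{(r)}$ being a submodule, to verify that each generator $\kappa=\mh\bigl(\sum_{I}a_I\mathcal{T}_I\bigr)$ of $K^{(r)}$ (the sum running over $(r+1)$-element index sets $I$, with $a_I\in\mathcal{P}$) satisfies $\delta_r(\kappa)=0$. Set $\xi=\sum_I a_I\mathcal{T}_I\in E^{(r)}$ and let $d_0=\deg\kappa$ be the minimal degree occurring in $\xi$. Since $\mathcal{T}_I=d_{r+1}(v_I)$ and $d_r\circ d_{r+1}=0$ (the classical Koszul complex $\Kos^1_*(f)$ being a complex), we have $d_r(\xi)=0$ in $E^{(r-1)}$. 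I would then invoke the principle that $\delta_r$ is the leading homogeneous term of $d_r$: for every homogeneous $\zeta\in E^{(r)}$ of degree $e$ one has $d_r(\zeta)=\delta_r(\zeta)+(\text{a sum of homogeneous terms of degree}>e)$. Granting this, decompose $\xi$ into homogeneous components and apply the identity term by term; the degree-$d_0$ part of $d_r(\xi)$ then equals $\delta_r(\mh\xi)=\delta_r(\kappa)$, and from $d_r(\xi)=0$ we obtain $\delta_r(\kappa)=0$, i.e. $\kappa\in Z^{(r)}$. (For $r=1$ the differential $d_1$ has target $J(f)$, so one should instead work with the $m$-adic filtration of $J(f)$; but this case is already subsumed in Lemma \ref{lem:Kf}, which gives $K^{(1)}=K(f)=Z^{(1)}$.)

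The one genuinely substantive step is the leading-term identity $d_r(\zeta)=\delta_r(\zeta)+(\deg>e)$ on homogeneous elements, and I expect this — rather than the homological bookkeeping — to be the main work. It should follow from the explicit expression \eqref{eq:mhT} for $\mh\mathcal{T}_{i_1,\ldots,i_r}$ together with the grading $\deg v_{i_1,\ldots,i_r}=m_{i_1,\ldots,i_r}$: writing $\zeta=\sum_J c_J v_J$ with $c_J$ homogeneous of degree $e-m_J$ and $\mathcal{T}_J=\mh\mathcal{T}_J+(\text{terms of degree}>m_J)$, one checks, using that $m_{i_j}+m_{J\setminus\{i_j\}}$ equals $m_J$ exactly when $i_j\in J^*$ and exceeds it when $i_j$ refers to the unique maximal multiplicity, that the leading pieces assemble (with no cancellation, since the relevant $v_{J\setminus\{i_j\}}$ for distinct $i_j\in J^*$ are distinct basis vectors) to $\sum_J c_J\,\mh\mathcal{T}_J=\delta_r(\zeta)$, while every remaining term lies in degree $>e$. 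The same computation in fact identifies $\Kos^0_*(f)$ with the associated graded of the $m$-adically filtered complex $\Kos^1_*(f)$, which in passing re-derives $\delta_r\circ\delta_{r+1}=0$.
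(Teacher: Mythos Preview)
Your proof is correct and follows essentially the same route as the paper: both arguments apply $d_r$ to $\xi=\sum_I a_I\mathcal{T}_I$, use $d_r\circ d_{r+1}=0$, and then extract the lowest-degree homogeneous component to conclude $\delta_r(\kappa)=0$. Your packaging of this step as the identity $d_r(\zeta)=\delta_r(\zeta)+(\text{higher degree})$ on homogeneous $\zeta$ (equivalently, recognizing $\Kos^0_*(f)$ as the associated graded of $\Kos^1_*(f)$) is exactly what the paper carries out explicitly via the coefficients $b_J^{(j)}$, and your separate remark on the case $r=1$ is a helpful clarification that the paper leaves implicit.
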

\begin{proof}
	The inclusion $B^{(r)} \subseteq K^{(r)}$ follows easily by definition.
	It suffices to show that each homogeneous element of $K^{(r)}$ must belong to $ Z^{(r)}$.
	Given a homogeneous element $ \kappa \in K^{(r)} $, there exist coefficients $a_{i_1, \ldots, i_{r+1} } \in \mathcal{P} $ such that 
	\[
		 \kappa = \mh \sum_{i_1, \ldots, i_{r+1}} a_{i_1, \ldots, i_{r+1} }  \mathcal{T}_{i_1, \ldots, i_{r+1} } .
	\] 
	We assume that  
	\begin{equation}\label{equ:sumT}
		\sum_{i_1, \ldots, i_{r+1}} a_{i_1, \ldots, i_{r+1} }  \mathcal{T}_{i_1, \ldots, i_{r+1} } =  \sum_{j\geqslant 0 } \sum_{i_1, \ldots, i_{r}} b_{i_1, \ldots, i_{r} }^{(j)} v_{i_1, \ldots, i_{r} } ,
	\end{equation}
	where each homogeneous coefficient $ b_{i_1, \ldots, i_{r} }^{(j)} \in \mathcal{P} $ either vanishes or satisfies
\[
	\deg b_{i_1, \ldots, i_{r} }^{(j)}  + m_{i_1, \ldots, i_{r}} = \deg(\kappa) + j.
\]
Using this notation, we have 
	\[ 
		\kappa = \mh \sum_{i_1, \ldots, i_{r+1}} a_{i_1, \ldots, i_{r+1} }  \mathcal{T}_{i_1, \ldots, i_{r+1} } =   \sum_{i_1, \ldots, i_{r}} b_{i_1, \ldots, i_{r} }^{(0)} v_{i_1, \ldots, i_{r} } .
	\]
Applying $d _r $ to both sides of \eqref{equ:sumT}, we get
\[ 
	0 = d_{r} \left( \sum_{i_1, \ldots, i_{r+1}} a_{i_1, \ldots, i_{r+1} }  \mathcal{T}_{i_1, \ldots, i_{r+1} } \right)  =   \sum_{j\geqslant 0 } \sum_{i_1, \ldots, i_{r}} b_{i_1, \ldots, i_{r} }^{(j)} \mathcal{T}_{i_1, \ldots, i_{r} } .
\]
Since $b_{i_1, \ldots, i_{r} }^{(0)} \mh \mathcal{T}_{i_1, \ldots, i_{r} }$  is homogeneous of degree $\deg(\kappa) $, we have 
\[
\sum_{i_1, \ldots, i_{r}} b_{i_1, \ldots, i_{r} }^{(0)} \mh \mathcal{T}_{i_1, \ldots, i_{r} } = 0  . \]
This implies that 
 \[ 
	 \delta_{r}(\kappa)=  \delta_{r} \sum_{i_1, \ldots, i_r} b_{i_1, \ldots, i_{r} }^{(0)} v_{i_1, \ldots, i_{r} }  = \sum_{i_1, \ldots, i_r} b_{i_1, \ldots, i_{r} }^{(0)} \mh \mathcal{T}_{i_1, \ldots, i_{r} }  =  0.  \]
In other words, 
$
	\kappa  \in Z^{(r)}
$.
This yields the inclusion $K^{(r)} \subseteq Z^{(r)}$.
\end{proof}
To describe all generators of $ K^{(r)} $,
we construct a natural filtration of $ K^{(r)} $.
\begin{notation}
	For a homogeneous element $ \kappa  \in K^{(r)}$, we define the level of $ \kappa $ as 
	\[
		\lev (\kappa) = \min_{ \{ a_{i_1,\ldots,i_{r+1}} \} } \left \{ \deg(\kappa) - \min_{\# I = r+1} \{ m_{I}+\mult (a_{I}) \}  \right \}	,
	\]
	where $ \{ a_{i_1, \ldots, i_{r+1}} \}$ with $ 1< i_1 < i_2 \cdots < i_{r+1} < n $ are coefficients verifying
	\[
	\kappa = \mh \sum_{\#I = r+1}a_{I} \mathcal{T}_{I}. \]
	Denote by $ K^{(r)}_l  $  the submodule of $ K^{(r)} $ generated by all the homogeneous elements of level $ \leqslant l  $.  
\end{notation}
Obviously, $ K^{(r)}_i \subseteq K^{(r)}_j $ for $i < j$. The level structure behaves well with the $\mathcal{P}$-module structure.
For homogeneous elements $ \kappa_1 $ and $ \kappa_2 $ of levels $ \leqslant l  $ and $ a, b \in \mathcal{P} $, we have 
\[
	\lev ( a \kappa_1  + b \kappa_2 ) \leqslant l. 
\]
This implies that the level of each homogeneous element of $ K^{(r)}_ l $ is less than or equal to $ l $. 
\begin{lem}
	There exists some integer $ L $, such that $ K^{(r)}_L = K^{(r)}$.
\end{lem}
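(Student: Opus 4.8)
The plan is to show that the levels appearing among homogeneous generators of $K^{(r)}$ are bounded. First I would observe that $K^{(r)}$ is a submodule of the Noetherian $\mathcal{P}$-module $E^{(r)}$, hence $K^{(r)}$ is finitely generated; choose a finite set of homogeneous generators $\kappa_1, \ldots, \kappa_s$ of $K^{(r)}$, which is possible since $K^{(r)}$ is a graded submodule. Each $\kappa_t$ is, by the very definition of $K^{(r)}$, of the form $\mh \sum_{\#I = r+1} a_{I}^{(t)} \mathcal{T}_{I}$ for some choice of coefficients $a_{I}^{(t)} \in \mathcal{P}$, and hence has a well-defined finite level $\lev(\kappa_t) < \infty$.

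Next I would set $L := \max_{1 \leqslant t \leqslant s} \lev(\kappa_t)$, a finite integer. By definition of the filtration, each generator $\kappa_t$ lies in $K^{(r)}_L$, since $\lev(\kappa_t) \leqslant L$. Because $K^{(r)}_L$ is a $\mathcal{P}$-submodule of $K^{(r)}$ containing a generating set of $K^{(r)}$, we conclude $K^{(r)}_L = K^{(r)}$. The only point that needs a word of care is that $\lev(\kappa_t)$ is genuinely finite for each generator: this follows because the infimum defining $\lev(\kappa)$ is taken over a nonempty set of representations (at least one representation $\kappa = \mh \sum a_I \mathcal{T}_I$ exists, by membership in $K^{(r)}$), and for any fixed such representation the quantity $\deg(\kappa) - \min_{\#I=r+1}\{m_I + \mult(a_I)\}$ is a well-defined integer, so the infimum over all representations is a finite integer (it is bounded below, since by Lemma \ref{lem:BKZ} and the degree bookkeeping $\mh\sum a_I\mathcal{T}_I$ has its minimal homogeneous part exactly when some $m_I + \mult(a_I)$ attains $\deg(\kappa)$, forcing $\lev(\kappa)\geqslant 0$).

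I expect the only mild obstacle to be the bookkeeping that confirms $\lev$ takes finite (indeed nonnegative) integer values; the Noetherian finiteness argument itself is routine once that is in place. In particular no new structural input beyond the finite generation of graded submodules of $E^{(r)}$ over the polynomial ring $\mathcal{P}$ is required.
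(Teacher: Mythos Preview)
Your proposal is correct and follows essentially the same approach as the paper: the paper's proof simply invokes Noetherianity of $E^{(r)}$ to get finite generation of $K^{(r)}$, then takes $L$ to be the maximal level among generators. Your write-up is more detailed in justifying that $\lev(\kappa_t)$ is finite and nonnegative, which the paper leaves implicit, but the core argument is identical.
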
 
\begin{proof}
	Since $ E^{(r)} $ is a Noetherian module, the submodule $K^{(r)}$ is finitely generated.
	Let $ L $ be the maximal level of the generators of $ K^{(r)}$. By definition, the equality $ K^{(r)}_L = K^{(r)}$ automatically holds.
\end{proof}
Note that for each index subset $ I $, the homogeneous element $\mh \mathcal{T}_{I}$ is of level zero.  
It can be checked that the converse is also true, so the following lemma holds.
\begin{lem}\label{lem:levelzero}
	The homogeneous element of level zero is generated by the elements 
	$\mh \mathcal{T}_{i_1,\ldots, i_{r+1}}$ with ${i_1,\ldots, i_{r+1}}$. 
	Therefore, $ K_0^{(r)} = B^{(r)}$.
\end{lem}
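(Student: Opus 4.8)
The plan is to prove the two inclusions separately. The inclusion $B^{(r)} \subseteq K_0^{(r)}$ is immediate: by definition $B^{(r)}$ is generated by the elements $\delta_{r+1}(v_{i_1,\ldots,i_{r+1}}) = \mh\mathcal{T}_{i_1,\ldots,i_{r+1}}$, and each such $\mh\mathcal{T}_I$ is visibly a homogeneous element of $K^{(r)}$ whose level is $\leqslant 0$ (take the single coefficient $a_I = 1$, so that $\deg(\mh\mathcal{T}_I) = m_I = m_I + \mult(1)$, giving level $0$); since level is a nonnegative quantity here, the level is exactly $0$. Conversely, for $K_0^{(r)} \subseteq B^{(r)}$ I would take a homogeneous generator $\kappa$ of level zero, so that there are coefficients $a_I \in \mathcal{P}$ with $\kappa = \mh\sum_{\#I = r+1} a_I \mathcal{T}_I$ and $\deg(\kappa) = \min_{\#I=r+1}\{m_I + \mult(a_I)\}$.

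The heart of the argument is to show that, under this equality, the minimal homogeneous part of $\sum a_I \mathcal{T}_I$ is already a $\mathcal{P}$-linear combination of the $\mh\mathcal{T}_I$. Write each $a_I = \mh(a_I) + (\text{higher order})$ and expand
\[
\sum_{\#I=r+1} a_I \mathcal{T}_I = \sum_{\#I=r+1} \mh(a_I)\bigl(\mh\mathcal{T}_I + (\text{higher order in }E^{(r)})\bigr) + (\text{higher order}).
\]
The point is that the term $\sum_I \mh(a_I)\,\mh\mathcal{T}_I$ sits in degree $\min_I\{\mult(a_I) + m_I\}$, which by the level-zero hypothesis equals $\deg(\kappa)$; meanwhile every other contribution — whether coming from a higher-order part of some $a_I$, or from the non-minimal homogeneous part of some $\mathcal{T}_I$ paired with $\mh(a_I)$ when $\mult(a_I)+m_I > \deg(\kappa)$ — lands in strictly higher degree. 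Hence $\mh\bigl(\sum_I a_I\mathcal{T}_I\bigr)$ is obtained by keeping only the degree-$\deg(\kappa)$ part, which is exactly $\sum_{I:\ \mult(a_I)+m_I = \deg(\kappa)} \mh(a_I)\,\mh\mathcal{T}_I$ — a $\mathcal{P}$-linear (indeed $\mathbb{C}$-linear on the homogeneous pieces) combination of the generators $\mh\mathcal{T}_I$ of $B^{(r)}$. This gives $\kappa \in B^{(r)}$, and since these $\kappa$ generate $K_0^{(r)}$ we conclude $K_0^{(r)} = B^{(r)}$.

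The main obstacle I anticipate is bookkeeping the degrees carefully when several index sets $I$ contribute at the minimal degree simultaneously, and making sure that the cancellation phenomenon behind $\mh$ cannot produce, from a sum whose naive leading term is $\sum_I \mh(a_I)\mh\mathcal{T}_I$, a genuinely lower-degree element once that leading term happens to vanish. But this cannot happen precisely because the level-zero hypothesis forces $\deg(\kappa) = \min_I\{m_I+\mult(a_I)\}$: if $\sum_I \mh(a_I)\mh\mathcal{T}_I$ vanished, then $\mh\bigl(\sum_I a_I\mathcal{T}_I\bigr)$ would have degree strictly greater than $\min_I\{m_I+\mult(a_I)\}$, and by minimality over all representing coefficient systems this would contradict $\lev(\kappa) = 0$. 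So the vanishing case is automatically excluded, and the argument closes. The claim that level-zero elements are exactly those generated by the $\mh\mathcal{T}_I$ — the sentence preceding the lemma — is then just the reformulation $K_0^{(r)} = B^{(r)} = \langle \mh\mathcal{T}_{i_1,\ldots,i_{r+1}}\rangle$.
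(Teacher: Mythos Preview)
Your proof is correct. The paper itself does not supply a proof of this lemma: it simply remarks before the statement that ``it can be checked that the converse is also true,'' and leaves the verification to the reader. Your argument fills in exactly this check, and the key observation --- that level zero forces $\deg(\kappa)=\min_I\{m_I+\mult(a_I)\}$, so the degree-$\deg(\kappa)$ homogeneous component of $\sum_I a_I\mathcal{T}_I$ is precisely $\sum_{I:\,\mult(a_I)+m_I=\deg(\kappa)}\mh(a_I)\,\mh\mathcal{T}_I$ and this cannot vanish --- is the right one. Your handling of the potential cancellation issue is also correct: vanishing of that leading sum would push $\deg(\kappa)$ strictly above $\min_I\{m_I+\mult(a_I)\}$, contradicting the level-zero hypothesis for the chosen representation.
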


\begin{cor}\label{cor:zero}
	If $ m_i $ is minimal in the set $\{m_1, \ldots, m_n \}$, then $ \mh(f_i) \kappa $ is of level zero for any $ \kappa \in K^{(r)} $.
\end{cor}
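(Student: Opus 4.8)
The plan is to derive this statement directly from the three structural results already in hand---Lemma \ref{lem:BKZ}, Lemma \ref{lem:ann} and Lemma \ref{lem:levelzero}---with essentially no extra work, so it really is a corollary. First I would record that it suffices to treat a homogeneous $\kappa \in K^{(r)}$ with $r \geqslant 1$: since $K^{(r)}$ is a graded $\mathcal{P}$-module and $\mh(f_i)$ is a homogeneous polynomial of degree $m_i$, the product $\mh(f_i)\kappa$ is homogeneous whenever $\kappa$ is, and the assertion that $\mh(f_i)\kappa$ is of level zero simply means $\mh(f_i)\kappa \in K^{(r)}_0$, which can be checked homogeneous component by homogeneous component.

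Next comes the one-line chain. By Lemma \ref{lem:BKZ} we have $K^{(r)} \subseteq Z^{(r)} = \ker \delta_r$. Applying Lemma \ref{lem:ann} with $k = i$---legitimate precisely because $m_i$ is minimal among $m_1, \ldots, m_n$---the homology $H_r(\Kos^0_*(f)) = Z^{(r)}/B^{(r)}$ is annihilated by $\mh(f_i)$; unwinding this, $\mh(f_i)\cdot z \in B^{(r)}$ for every $z \in Z^{(r)}$, so in particular $\mh(f_i)\kappa \in B^{(r)}$. Finally, Lemma \ref{lem:levelzero} identifies $B^{(r)}$ with $K^{(r)}_0$. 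Hence $\mh(f_i)\kappa \in K^{(r)}_0$, that is, $\mh(f_i)\kappa$ is of level zero.

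I do not foresee any genuine obstacle; the only point that deserves a word is the bookkeeping around the notion of level. One should note that every homogeneous element of $K^{(r)}_0$ has level at most $0$ (by the remark that $\lev(a\kappa_1 + b\kappa_2) \leqslant l$ whenever $\kappa_1,\kappa_2$ have level $\leqslant l$), whereas the level of a homogeneous element of $K^{(r)}$ is always $\geqslant 0$ since $\deg\bigl(\mh \sum_I a_I \mathcal{T}_I\bigr) \geqslant \min_I\{m_I + \mult(a_I)\}$. Thus membership in $K^{(r)}_0 = B^{(r)}$ is the same as being of level zero, and with that identification the corollary follows at once.
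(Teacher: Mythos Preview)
Your proof is correct and follows essentially the same route as the paper: use Lemma~\ref{lem:BKZ} to place $\kappa$ in $Z^{(r)}$, invoke Lemma~\ref{lem:ann} to get $\mh(f_i)\kappa \in B^{(r)}$, and then identify $B^{(r)}$ with $K^{(r)}_0$ via Lemma~\ref{lem:levelzero}. The only cosmetic difference is that the paper phrases the middle step as ``in the same manner of Lemma~\ref{lem:ann}'' rather than citing its conclusion directly, but the content is identical.
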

\begin{proof}
	From Lemma \ref{lem:BKZ}, we know $ \kappa \in K^{(r)}  \subseteq Z^{(r)}$. 
	In the same manner of Lemma \ref{lem:ann}, $ \mh(f_i) \kappa $ is generated by $ \mh \mathcal{T}_{i_1, \ldots, i_{r+1}} $ with $ 1 \leqslant i_1\leqslant \ldots\leqslant i_{r+1} \leqslant n $. Now the corollary follows from Lemma \ref{lem:levelzero}.  
\end{proof}
\begin{notation}
	For $ \kappa \in K^{(r)} $ or $ K^{(r)} / K^{(r)}_l  $, we denote by  
	\[
	\Ann(\kappa; K^{(r)}_l) = \left\{ p \in \mathcal{P} ~|~ p \cdot \kappa \in  K^{(r)}_l \right\}  \subseteq \mathcal{P}
	\]
	the annihilator ideal of $ \kappa $.
\end{notation}
	The Corollary \ref{cor:zero} implies that $\mh(f_i ) \in \Ann(\kappa; K^{(r)}_0)$ whenever $ m_i $ is minimal. 
\begin{defn}
	We say that $ l \geqslant 1 $ is a gap number of $ K^{(r)} $ if $ K_{l}^{(r)} \not= K_{l-1}^{(r)} $.  
\end{defn}
Assume that $ L_1, \ldots, L_k $ are all the gap numbers of $ K^{(r)} $.
We obtain the filtration of $ K^{(r)}$:
\[
	B^{(r)} = K^{(r)}_0  \subsetneq K_{L_1}^{(r)} 	\subsetneq K_{L_2}^{(r)}  \subseteq \cdots \subsetneq K_{L_r}^{(r)}   = K^{(r)}
\] 
and for $ i \geqslant 0 $,
\[
	K_{L_{i}}^{(r)} = \cdots =  K_{L_{i+1} -1}^{(r)} 
\]
by setting $ L_0 = 0 $.
In particular, $ K(f) $ contains no gap numbers if and only if 
$ K(f) = B^{(1)} $.  
\subsection{Filtration of $ Z^{(r)} $}
This section is devoted to giving a filtration of $ \mathcal{P} $-module $ Z^{(r)}$. 
Let $\mathcal{P}[\mu] = \mathbb{C} [ x_1 , \ldots , x_n , \mu] $ be a polynomial ring.  We view $ \mathcal{P}[\mu] $ as a graded algebra over $ \mathcal{P} $, in which graded structure of $ \mathcal{P}[\mu]$ is determined by setting $\deg(x_i) = 1$ and $ \deg (\mu) = -1$. 

For $ r \geqslant 1 $, define the free graded $ \mathcal{P}[\mu] $-module:
\begin{align*}
	E^{(r)}[\mu] = E^{(r)}\otimes_{\mathcal{P}} \mathcal{P}[\mu] = \mathcal{P}[\mu] \left \langle v_{i_1,\ldots,i_r}  \right \rangle_{i_1,\ldots,i_r}
\end{align*}
with $\deg(v_i) =0 $ and $\deg(v_{i_1, \ldots, i_r} ) = m_{i_1, \ldots, i_r} $.
\begin{notation} For $ r \geqslant 2 $, we define the homogeneous element of $ E^{(r)}[\mu] $:
	\[  \mathcal{T}^{\mu}_{i_1,\ldots,i_{r+1}} =  \sum_{j=1}^{r+1} (-1)^{j+1}  f_{i_j}(\mu \mathbf{x}) \cdot \mu^{m_{i_1, \ldots, \hat {i_j} , \ldots i_{r+1}}-m_{i_1,\ldots,i_{r+1}}} \cdot v_{i_1, \ldots, \hat {i_j} , \ldots i_{r+1}}    ;
	\] 
	and 
	\[
		\mathcal{T}^{\mu}_{i,j} = \frac{1}{\mu^{m_{i,j}}} \left( f_i (\mu\mathbf{x} ) v_i - f_j(\mu\mathbf{x} ) v_i \right).
	\]
\end{notation}
Note that
\[ m_{i_j} + m_{i_1, \ldots, \hat {i_j} , \ldots i_{r+1}}-m_{i_1,\ldots,i_{r+1}} \geqslant  0. \]
The equality holds if and only if $m_{i_j}$ does not achieve the unique maximum value.
It follows that 
\[
	\mathcal{T}^{\mu}_{i_1,\ldots,i_{r+1}} = 	\mh \mathcal{T}_{i_1,\ldots,i_{r+1}} \mod \mu E^{(r)}[\mu].
\]

\begin{defn}[Extended Koszul complex]
Define the extended Koszul complex $ \Kos_*^{\mu}(f)$ as a complex of graded $\mathcal{P}[\mu]$-modules:
\[
\begin{tikzcd}
	0 \ar[r] & E^{(n)}[\mu] \ar[r, "\Delta_{n}"]  & E^{(n-1)}[\mu] \ar[r] & \cdots  \ar[r,"\Delta_2"] &  E^{(1)}[\mu] \ar[r,"\Delta_1"] &  \mathcal{P}[\mu] \ar[r]  & 0    
\end{tikzcd}
\]
where $ \Delta_i $'s are the degree-preserving morphisms defined as
 $ \Delta_1 (v_i) = f_i (\mu \mathbf{x}) $ and 
$
\Delta_{r+1} (v_{i_1,\ldots, i_{r+1}})= \mathcal{T}^{\mu}_{i_1,\ldots, i_{r+1}}$ for $ r \geqslant 0 $.
\end{defn} 
Roughly speaking, the classical Koszul complex corresponds to $ \mu = 1 $, while the modified complex corresponds to $ \mu = 0 $ (except for $ \Delta_1 $).
\begin{notation}
For $l \geqslant 1$, define the quotient
\[
	\mathcal{P}[\mu]_l:= \mathcal{P}[\mu]/\left\langle \mu^{l} \right \rangle \]
and
\[
	E^{(r)}[\mu]_{l} := 	E^{(r)}[\mu] / \left\langle \mu^{l} \right \rangle  \cong  E^{(r)} \otimes_{\mathcal{P}} \mathcal{P}[\mu]_l.
\] 
Automatically, we have $\mathcal{P}[\mu]_1 \cong \mathcal{P} $ and $E^{(r)}[\mu]_{1} \cong  E^{(r)}$.
For $ l < k $, we define the $\mathcal{P}$-morphisms 
\[ \Uparrow_k : \mathcal{P}[\mu]_l \to \mathcal{P}[\mu]_k \]
and 
\[ \Uparrow_k:
E^{(r)}[\mu]_l \to E^{(r)}[\mu]_{k} 
\]
by multiplying $\mu^{k-l}$.
Define the $l$-truncation maps 
\[ \Downarrow_l : \mathcal{P}[\mu] \; ( \text{resp. } \mathcal{P}[\mu]_k) \to \mathcal{P}[\mu]_l \]
and 
\[ \Downarrow_l:
E^{(r)}[\mu]\; ( \text{resp. } E^{(r)}[\mu]_k)  \to E^{(r)}[\mu]_{l} \]
by cutting off higher order terms.
\end{notation}

The extended Koszul complex $ \Kos_*^{\mu}(f)$ tensoring with $\mathcal{P}[\mu]_l$ becomes 
\[
\begin{tikzcd}
	0 \ar[r] & E^{(n)}[\mu]_l \ar[r, "\Delta_{n}\otimes \Downarrow_l"]  & E^{(n-1)}[\mu]_l \ar[r] & \cdots  \ar[r,"\Delta_2\otimes \Downarrow_l"] &  E^{(1)}[\mu]_l \ar[r,"\Delta_1\otimes \Downarrow_l"] &  \mathcal{P}[\mu]_l  .  
\end{tikzcd}
\]
\begin{notation}
For $ r\geqslant 1 $ and $ l \geqslant 1 $,  we set 
\[
	\tilde{Z}_{l}^{(r)}  = \ker  (\Delta_r\otimes \Downarrow_l )\subseteq E^{(r)}[\mu]_l;
\]
and let ${Z}_{l}^{(r)}$ be the leading coefficient of $ \tilde{Z}_{l}^{(r)}$, i.e., 
\[
	 {Z}_{l}^{(r)}  =  \Downarrow_1(\tilde{Z}_{l}^{(r)} )  \subseteq E^{(r)} .
\]
For completeness, we also let $ \tilde{Z}_{0}^{(r)} = 0  $.
Define the limit of sequence $ Z_{l}^{(r)}, l = 1,2,3,\ldots, $ as 
\[
	Z_{\infty}^{(r)} := 	\cap_{l} Z^{(r)}_{l}.
\]
\end{notation}
Since the kernel of $ \Downarrow_1\; : \tilde{Z}^{(r)}_{l}  \to {Z}^{(r)}_{l}$ equals $ \Uparrow_l \tilde{Z}^{(r)}_{l-1}$, we obtain isomorphisms
\begin{equation}\label{equ:Zr}
	\Downarrow_1\;: \tilde{Z}^{(r)}_{l}  / \Uparrow_l  \tilde{Z}^{(r)}_{l-1} \xrightarrow{\text{iso}} {Z}^{(r)}_{l} 
\end{equation}
and 
\begin{equation}\label{equ:rho1}
	 \Downarrow_1\;: \tilde{Z}^{(r)}_{l} / \left(  \Uparrow_l  \tilde{Z}^{(r)}_{l-1} + \Downarrow_l \tilde{Z}^{(r)}_{l+1} \right) \xrightarrow{\text{iso}}   {Z}^{(r)}_{l} /\Downarrow_l {Z}^{(r)}_{l+1}
\end{equation}
by abuse of notation.
In particular, by identifying $E^{(r)}[\mu]_1$ and $E^{(r)}$ we get 
\[
	\tilde{Z}_{1}^{(r)}=Z_{1}^{(r)} = \ker \delta_r = Z^{(r)} . 
\]
Obviously, $ Z_{i}^{(r)} \supseteq Z_{j}^{(r)} $ for $ i < j $.
Now we give a characterization of $Z_{\infty}^{(r)}$.
\begin{thm}\label{thm:Zinf}
With the notations above, we get  
\[
	Z_{\infty}^{(r)} = K^{(r)}	.
\]
\end{thm}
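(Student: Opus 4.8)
The strategy is to establish the two inclusions separately, using the extended Koszul complex $\Kos_*^{\mu}(f)$ as an interpolation between the classical Koszul complex (the specialisation $\mu=1$) and the modified one (the specialisation $\mu=0$), and invoking the fact recalled above that the formal completion $\Kos_*^1(f)\otimes_{\mathcal P}\mathcal O$ is exact. For $K^{(r)}\subseteq Z_{\infty}^{(r)}$ it is enough, since $Z_{\infty}^{(r)}$ is a submodule, to check that each homogeneous generator $\kappa=\mh\bigl(\sum_{|I|=r+1}a_I\mathcal T_I\bigr)$ of $K^{(r)}$ lies in $Z_l^{(r)}$ for every $l$. Put $\xi=\sum_I a_I\mathcal T_I=d_{r+1}\bigl(\sum_I a_I v_I\bigr)$, a (generally inhomogeneous) element of $\ker d_r\subseteq E^{(r)}$ with $\mh\xi=\kappa$, and let $\hat\xi\in E^{(r)}[\mu]$ be its $\mu$-homogenisation to degree $\ord\xi$; then $\hat\xi$ is homogeneous and $\Downarrow_1\hat\xi=\kappa$. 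The crucial claim is $\Delta_r(\hat\xi)=0$. I would prove it by introducing the $\mathcal P[\mu,\mu^{-1}]$-linear rescaling $S$ on $E^{(s)}[\mu,\mu^{-1}]$ given by $S(v_J)=\mu^{-m_J}v_J$: a direct computation yields $S\circ\Delta_s=\tilde d_s\circ S$, where $\tilde d_s$ is the classical Koszul differential with each $f_i$ replaced by $f_i(\mu\mathbf x)$; since $d_r(\xi)=0$ is a polynomial identity, substituting $\mathbf x\mapsto\mu\mathbf x$ gives $\tilde d_r(S\hat\xi)=0$, whence $\Delta_r(\hat\xi)=0$ because $S$ is invertible. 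As $\Delta_r$ is $\mathcal P[\mu]$-linear it commutes with truncation modulo $\mu^l$, so $\Downarrow_l\hat\xi\in\tilde Z_l^{(r)}$ and therefore $\kappa=\Downarrow_1(\Downarrow_l\hat\xi)\in Z_l^{(r)}$ for all $l$; hence $\kappa\in Z_{\infty}^{(r)}$.

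For $Z_{\infty}^{(r)}\subseteq K^{(r)}$, fix a homogeneous $\kappa\in Z_{\infty}^{(r)}$ of degree $d$ and expand $\Delta_r=\sum_{s\geq 0}\mu^s\Delta_r^{(s)}$ with $\Delta_r^{(0)}=\delta_r$. Unwinding the definition, membership $\kappa\in Z_l^{(r)}$ amounts to solving the successive equations $\delta_r\eta_0=0$, $\delta_r\eta_1+\Delta_r^{(1)}\eta_0=0$, $\ldots$, $\sum_{s=0}^{l-1}\Delta_r^{(s)}\eta_{l-1-s}=0$, with $\eta_0=\kappa$ and $\eta_j\in(E^{(r)})_{d+j}$. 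Since $\kappa\in Z_{\infty}^{(r)}$, such partial solutions exist for every $l$; the set of partial solutions of length $l$ is a nonempty finite-dimensional affine subspace of $\bigoplus_{j=1}^{l-1}(E^{(r)})_{d+j}$, and the truncation maps between consecutive levels are affine, so a Mittag--Leffler argument (the images form a descending chain of affine subspaces of a fixed finite-dimensional space, hence stabilise) shows the inverse limit is nonempty. This produces a homogeneous $\hat\kappa=\sum_{j\geq 0}\mu^j\eta_j\in E^{(r)}[[\mu]]$ with $\Delta_r(\hat\kappa)=0$ and $\Downarrow_1\hat\kappa=\kappa$. Specialising $\mu=1$ — legitimate on homogeneous elements because the degrees of the $\eta_j$ strictly increase, and carrying $\Delta_r$ to $d_r$ — produces $\xi:=\sum_j\eta_j$, lying in the kernel of $d_r$ on the formal completion $E^{(r)}\otimes_{\mathcal P}\mathcal O$, with $\mh\xi=\eta_0=\kappa$. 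Exactness of the completed Koszul complex gives $\xi=\sum_{|J|=r+1}\hat a_J\mathcal T_J$ for some $\hat a_J\in\mathcal O$; replacing each $\hat a_J$ by its truncation to degree $\leq d$ leaves $\mh\xi$ unchanged, so $\kappa=\mh\bigl(\sum_J a_J\mathcal T_J\bigr)\in K^{(r)}$.

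The main obstacle is the second inclusion, and within it the passage from ``$\kappa$ is liftable to every finite level'' to ``$\kappa$ underlies a genuine $\mu$-adic cycle'': a priori the relevant inverse limit could be empty, and the point is that for a fixed homogeneous $\kappa$ the partial lifts live in finite-dimensional spaces, even though the lifting tree is not finitely branching. A secondary but unavoidable chore, present in both directions, is to keep the grading conventions ($\deg\mu=-1$, $\deg x_i=1$) straight so that the homogenisations $\hat\xi$ and $\hat\kappa$ genuinely lie in $E^{(r)}[\mu]$ and $E^{(r)}[[\mu]]$ with no negative powers of $\mu$, and — in the exceptional case $r=1$ — to use $\delta_1=f_*$ in place of the naive $\mu=0$ reduction of $\Delta_1$, consistently with the remark following the definition of $\Kos_*^{\mu}(f)$.
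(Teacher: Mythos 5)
Your proposal is correct and follows essentially the same route as the paper: lift each generator $\mh\bigl(\sum_I a_I\mathcal{T}_I\bigr)$ of $K^{(r)}$ to a $\mu$-homogenised cycle to get $K^{(r)}\subseteq Z^{(r)}_l$ for all $l$, then, for the reverse inclusion, assemble a $\mu$-adic cycle $\sum_j \mu^j\eta_j$, specialise $\mu=1$ inside the formal completion, and invoke exactness of the completed classical Koszul complex before truncating the $\mathcal{O}$-coefficients back to polynomials. The one place you go beyond the paper is the Mittag--Leffler argument guaranteeing a compatible system of finite-level lifts, a step the paper simply asserts ("we can find the sequence $\theta^{\mu}_l$"), and your finite-dimensionality justification is exactly what is needed there.
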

\begin{proof}
	According to Lemma \ref{lem:BKZ}, we have already shown that $ K^{(r)} \subseteq Z^{(r)} = Z^{(r)}_1 $. In the same manner, for any $ l \geqslant 2 $, the inclusion $K^{(r)} \subseteq Z^{(r)}_l $ also holds. Thus
	\[
	\cap_{l =1}^{\infty} Z^{(r)}_{l}  \supseteq K^{(r)}	.
\]
	Let $ \theta \in \cap_{l} Z^{(r)}_{l} $.
	Then we can find the sequence $ 
		\theta_l^\mu \in \tilde{Z}^{(r)}_l 	
	$ with $ l \in \mathbb{Z}_{\geqslant 1} $ such that $\Downarrow_1 (\theta_l^\mu ) = \theta$ and 
	$\Downarrow_{l-1} (\theta_{l}^\mu) =\theta_{l-1}^\mu $.
	Define 
	\[ \theta_{\infty}^\mu:= \varprojlim_{l \to \infty} \theta_l^\mu \in \mathcal{O}[[\mu]] \otimes_{\mathcal{P}} E^{(r)}.
	\]
	By definition, we have
	\[  (\Delta_r \otimes \Downarrow_l ) (\theta_l^{\mu}) = 0 \mod \mu^l \]
	and then  
	\[ \Delta_r (\theta_{\infty}^{\mu})  =  \lim_{l \to  \infty} (\Delta_r \otimes \Downarrow_l) (\theta_l^{\mu}) = 0 . \]
	Substituting $\mu = 1 $ into the expression of $ \theta^{\mu}_{\infty} $, the resulting element $\theta_\infty^{1} \in \mathcal{O}\otimes_{\mathcal{P}} E^{(r)}$ satisfies 
	\[
		d_r (\theta_\infty^{1}) = 0 \text{ and } \mh \theta_\infty^{1} = \theta . \]
	Since the classical Koszul complex is exact over the local ring $ \mathcal{O}$,  there exists some element  $\Theta \in \mathcal{O} \otimes_{\mathcal{P}} E^{(r+1)}$ verifying 
		$ \theta_\infty^{1} = d_{r+1} \Theta $. 
		Write
	\[	d_{r+1} \Theta= \sum_{I} \alpha_I \mathcal{T}_I + \beta_I \mathcal{T}_I  \]
	where $ \alpha_I \in \mathcal{P}$, $\sum_{I} \alpha_I \mathcal{T}_I \in E^{(r+1)}$, and $ \beta_I \in \mathcal{O}$ satisfying $\mult (\beta_I) + m_I > \deg(\theta)$.
	Then
	\[
		\theta = \mh \theta_\infty^{1} = \mh  (\sum_{I} \alpha_I \mathcal{T}_I ) \in K^{(r)}.
	\]
	This implies
	$
		\cap_{l} Z^{(r)}_{l}  \subseteq K^{(r)}	
	$.
\end{proof}
\begin{notation}
	We call $ l $ ($\geqslant 1 $) a gap number of $ Z^{(r)} $ if $ Z_{l}^{(r)} \not= Z_{l+1}^{(r)} $.  
\end{notation}
Assume that $ L_1, \ldots, L_k $ are all the gap numbers of $ Z^{(r)} $.
Setting $ L_0 = 0 $, we obtain the filtration of $ Z^{(r)}$:
\[
	 Z^{(r)}  = Z_{L_1}^{(r)} \supsetneq Z_{L_2}^{(r)}  \supsetneq \cdots \subsetneq Z_{L_k}^{(r)} \subsetneq Z_{L_k+1}^{(r)}  =  Z_{\infty}^{(r)}  = K^{(r)}
\] 
and 
\[
	Z_{L_{i}+1}^{(r)} = \cdots =  Z_{L_{i+1} }^{(r)}  \text{ for $ i \geqslant 0 $}.
\]

\section{Main Results}\label{sec:MainResult}
\subsection{Bigraded Structure}
 Now we are in a position to establish the relation between $ \mathcal{P} $-modules $ Z^{(r+1)}$ and $ K^{(r)} $. In the previous section, we equipped both modules with bigraded structures, defined by the degree inherited in $E^{(r)}$ and the level filtration.
Let us define the relative graded modules with respect to the level filtration as follows
\[  \gr_{\bullet, l}(Z^{(r+1)}) = Z_{l}^{(r+1)} /Z_{l+1}^{(r+1)} \]
and 
\[ \gr_{\bullet,l}(K^{(r)}) = K^{(r)}_l / K^{(r)}_{l-1} .\] 
We  show that the component $\gr_{d,l}(Z^{(r+1)})$ (degree $ d $, level $ l $)  is naturally isomorphic to the component $\gr_{d+l,l}(K^{(r)})$ in $ K^{(r)}$. 
To this end, we  construct a morphism from $ Z_l^{(r+1)} $ to $ K_{l}^{(r)}$. For $ l \geqslant 0 $, consider a $\mathcal{P}[\mu]$-homogeneous element $\sum_{I} a_I^\mu v_I $ of $\tilde{Z}^{(r+1)}_l$. From the definition of $ \tilde{Z}^{(r+1)}_l $, we have 
\begin{equation}\label{eq:DeltaKappa}
	\Delta_{r+1} \left( \sum_{\# I = r+1} a_I^\mu v_I \right)  = \sum_{\# I = r+1} a_I^\mu \mathcal{T}_I^{\mu} =  \kappa \mu^{l} \mod \mu^{l+1},
\end{equation}
for some homogeneous element $\kappa \in E^{(r)}$.
Since $ \Delta_{r+1}$ preserves degrees, it follows from \eqref{eq:DeltaKappa} that  
\begin{equation}\label{eq:degkappa}
	\deg (\sum_{\# I = r+1} a_I^\mu v_I) = \deg(\kappa \mu^{l}) = \deg\kappa - l .	
\end{equation}
Substituting $\mu =1$ to each $a_I^\mu$, we have coefficients $a_I^1 \in \mathcal{P}$ verifying  
\[
 \mh\sum_{\# I = r+1 } a_I^1 \mathcal{T}_I =  \kappa 
\]
and 
\[ 
\deg(\kappa) - \min_{\# I = r+1} \{ \mult{a_I^1} + m_I \} \leqslant l.
\]
Thus, $ \kappa $ is of level $\leqslant l$. We obtain the $\mathcal{P}$-homomorphism 
\begin{align*}
	 \tilde{\Lambda}_l : \tilde{Z}^{(r+1)}_l &\to K_l^{(r)}	\\
	 \sum_{I} a_I^\mu v_I &\mapsto \kappa:=\mh \sum_{I} a_I^{1} \mathcal{T}_I.
\end{align*}
The equality \eqref{eq:degkappa} yields that $\tilde{\Lambda}_l$ is homogeneous of degree $ l $ as a graded morphism. It is trivial to see that the kernel of $\tilde{\Lambda}_l$ equals $\Downarrow_l   \tilde{Z}^{(r+1)}_{l+1} $. For $ l\geqslant 1 $,
we have the commutative diagram: 
	\[
\begin{tikzcd}
	\tilde{Z}^{(r+1)}_{l-1} \ar[r, "\tilde{\Lambda}_{l-1}"]\ar[d, "\Uparrow_{l}"] & K_{l-1}^{(r)}\ar[d, hookrightarrow]\\
	\tilde{Z}^{(r+1)}_{l} \ar[r, "\tilde{\Lambda}_{l}"] & K_{l}^{(r)}
\end{tikzcd}\]
Combining the isomorphism \eqref{equ:Zr}, we have the map
\begin{equation}\label{eq:Lambda}
	\begin{tikzcd}
		\Lambda_l :  {Z}^{(r+1)}_{l} \ar[r,"\Downarrow_1^{-1}"]& \tilde{Z}^{(r+1)}_{l}/ \Uparrow_l \tilde{Z}^{(r+1)}_{l-1} \ar[r,"\tilde{\Lambda}_l"] &  \gr_{\bullet, l} K^{(r)} . 
	\end{tikzcd}
\end{equation}   
The following lemma implies that $ \tilde{\Lambda}_l $ ($l \geqslant 1$) in \eqref{eq:Lambda} is surjective and so is $\Lambda_l$. 
\begin{lem}\label{lem:kappa}
	If $ \kappa $ is homogeneous of level $l$ with $ l \geqslant 1 $, then 
	$\kappa = \tilde{\Lambda}_l (\theta^{\mu})$ for some homogeneous
	element $\theta^\mu \in \tilde{Z}^{(r+1)}_{l} $.  
\end{lem}
\begin{proof}
	Write 
	\[
		\kappa = \mh \sum_{\#I ={r+1} }  a_{I} \mathcal{T}_{I} 	.
	\]
	Take $  m_0 := \min_{I} \{ \mult a_{I} + m_{I}\}$ and $ a_{I}^{\mu}: = \frac{1}{\mu^{m_0 -m_{I}}} a_{I}(\mu \mathbf{x}) \in E^{(r+1)}[\mu]$. Then  
	\[
		\kappa \mu^l = \sum_{\# I = r+1} a_{I}^{\mu} \mathcal{T}_{I}^\mu	\mod \mu^{l+1}.
	\]
	It follows that  
	\[
		\theta^\mu:= \Downarrow_{l} (\sum_{\# I = r+1} a_{I}^{\mu} v_{I}) 	
	\]
	is contained in $\tilde{Z}^{(r+1)}_l$ and represents the preimage of $\kappa $. 
\end{proof} 

\begin{thm}
	For a fixed level $ l \geqslant  1 $, the kernel of $\Lambda_l :  Z^{(r+1)}_{l} \to  \gr_{\bullet, l}(K^{(r)}) $ coincides with $Z^{(r+1)}_{l+1}$. Consequently, the homomorphism
		\begin{equation}\label{equ:lambda_l}
			\lambda_l : 	\gr_{\bullet, l}(Z^{(r+1)}):= Z_{l}^{(r+1)}/Z_{l+1}^{(r+1)}  \to \gr_{\bullet, l}(K^{(r)}) 
		\end{equation}
		induced by $ \Lambda_l $ is a degree $ l $ isomorphism of graded modules. Moreover, the gap numbers of $ K^{(r)} $ are identical to those of $ Z^{(r+1)}$.
\end{thm}
\begin{proof}
	Since the kernel of $ \tilde{\Lambda}_l  $ equals $  \Downarrow_l \tilde{Z}_{l+1}^{(r+1)} $, we have 
	$ Z_{l+1}^{(r+1)} = \Downarrow_1 \ker \tilde{\Lambda}_l  \subseteq \ker \Lambda_l
$.
Thus, $ \lambda_l $ is well-defined.
	By Lemma \ref{lem:kappa}, $\Lambda_l $ is surjective and so is $ \lambda_l $.
	To determine the kernel of $ \Lambda_l $,
	we may assume that 
	\[ \theta^\mu =\sum_{\#I = r+1 }a_I^\mu v_I \in \tilde{Z}^{(r+1)}_{l} 
	\] is a homogeneous element such that $ \Lambda_l (\Downarrow_1(\theta^\mu))  $ vanishes in  $\gr_{\bullet, l}(K^{(r)})$. This yields that $\kappa :=  \tilde{\Lambda}_l(\theta^\mu) $ is contained in  $K_{l-1}^{(r)} $, i.e., the level $ l_\kappa $ of $\kappa$ is less than or equal $ l-1 $. 
	From the definition of $ \tilde{\Lambda}_l $, we get 
	\[
		\sum_{\# I = r+1 } a_I^\mu \mathcal{T}_I^\mu = \kappa \mu^{l} \mod \mu^{l+1}. 
	\]

	Case 1 : $l_\kappa = 0$. From Lemma \ref{lem:levelzero}, we know $ \kappa $ can be expressed as 
	\[
	\kappa = \sum_{\# I = r+1} b_I \mh \mathcal{T}_I
	\]
	for some homogeneous elements $b_I$ of $ \mathcal{P } $.
	Therefore, 
	\[
		0= \sum_I (a_I^\mu  -  b_I \mu^{l})  \mathcal{T}_I^\mu \mod \mu^{l+1}.
	\]
	Setting
	\[ 
		\Theta_1^\mu:= \sum_I (a_I^\mu  -  b_I \mu^{l}) v_{I} \in \tilde{Z}_{l+1}^{(r+1)}
	\]
	we have $\Downarrow_1(\theta^\mu) =\Downarrow_1 (\Theta_1^\mu) \in {Z}_{l+1}^{(r+1)}$.

	Case 2: $1 \leqslant l_\kappa \leqslant l - 1 $.
	It follows from Lemma \ref{lem:kappa} that 
	\[
		\kappa  = \tilde{\Lambda}_{l_\kappa} (\sum_I c_I^\mu v_I) 
	\]
	for some homogeneous coefficients $ c_I $ of $ \mathcal{P}[\mu]$. Equivalently,
	\[
		\kappa \cdot \mu^{l_\kappa}	= \sum_I c_I^\mu \mathcal{T}_I^\mu \mod \mu^{l_\kappa + 1}
	\] 
	so we obtain  
	\[
		0= \sum_I (a_I^\mu - \mu^{l- l_\kappa} \cdot c_I^\mu )  \mathcal{T}_I ^\mu \mod \mu^{l+1}.
	\]
	Now we set 
	\[ \Theta_2^\mu := \sum_I (a_I^\mu - \mu^{l- l_\kappa} \cdot c_I^\mu ) v_{I} .
	\]
	Then $\Theta_2^\mu \in \tilde{Z}^{(r+1)}_{l+1} $ and $\Downarrow_1(\Theta_2^\mu) = \Downarrow_1(\theta^\mu) \in Z^{(r)}_{l+1} $.
	
	In both cases, we obtain $\ker \lambda_l = Z^{(r)}_{l+1}$. So the isomorphism \eqref{equ:lambda_l} holds. 
\end{proof}
As a consequence, we have $ Z_{L}^{(r)} = Z_{\infty}^{(r)} $, where $ L $ is the maximal level verifying $ K_{L}^{(r)}= K^{(r)} $.   

 Combining the isomorphism \eqref{equ:rho1}, we have the commutative diagram of isomorphisms:
 \[
\begin{tikzcd}
	\tilde{Z}_{l}^{(r+1)} /\left \langle  \Downarrow_l(\tilde{Z}_{l+1}^{(r+1)}) , \Uparrow_l(\tilde{Z}_{l-1}^{(r+1)}) \right  \rangle   \ar[rd, "\tilde{\Lambda}_{l}"]\ar[d, "\Downarrow_{1}"] & {}\\
	\gr_{\bullet,l}(Z^{(r+1)}) \ar[r, "{\lambda}_{l}"] &  \gr_{\bullet,l}(K^{(r)})
\end{tikzcd}.
\]
For convenience, we write $ \tilde{\Lambda} $ for each $ \tilde{\Lambda}_l $. 
As a consequence,  $ \kappa \in K^{(r)} $ is homogeneous of level $ l $ (with $l \geqslant 1 $) if and only if there exists some element  
\[ 
	\sum_{\# I = r+1} a_{I}^{\mu} v_{I} \in  \tilde{Z}_{l}^{(r+1)} \setminus \left \langle  \Downarrow_l(\tilde{Z}_{l+1}^{(r+1)}) , \Uparrow_l(\tilde{Z}_{l-1}^{(r+1)}) \right  \rangle
\]
such that 
\[
   \kappa =\tilde{\Lambda}\left( \sum_{\# I = r+1} a_{I}^{\mu} v_{I}  \right) =\mh \left( \sum_{\# I = r+1} a_{I}^{1} \mathcal{T}_{I} \right). 
\]
According to the arguments above, we have the complete description for generators of $ K^{(r)}_l $ extending Lemma \ref{lem:levelzero}:
\begin{thm}\label{thm:generator}
	Let $ l \geqslant 1$ be an integer. For $ 1 \leqslant j \leqslant l $, assume that the elements $ \theta^{\mu}_{ j, k } \in \tilde{Z}_{j}^{(r+1)}  $ with $1 \leqslant k \leqslant k_j $  
form the generators of the quotient $\tilde{Z}_{j}^{(r+1)} /\left \langle  \Downarrow_j(\tilde{Z}_{j+1}^{(r+1)}) , \Uparrow_j(\tilde{Z}_{j-1}^{(r+1)}) \right  \rangle $. Then
the set 
\[  \left \{  \tilde{\Lambda}( \theta_{j,k}^{\mu}), \mh \mathcal{T}_{I} \right \}_{ 1 \leqslant j \leqslant l, 1 \leqslant k \leqslant k_j, \# I = r+1 }
\]
 is a homogeneous generating subset of $K^{(r)}_{l} $. 
\end{thm}

\begin{cor}\label{cor:ann}
	Suppose that $ {Z}_{l}^{(r+1)} /  {Z}_{\infty}^{(r+1)} $(resp. $ {Z}_{l}^{(r+1)} $) is generated by the single element $\Downarrow_1 ( \theta^\mu ) $ for some $\theta^\mu \in \tilde{Z}_{l}^{(r+1)}$. 
	Set $\kappa = \tilde{\Lambda}(\theta^\mu) $. Then  ${Z}_{l+1}^{(r+1)} /  {Z}_{\infty}^{(r+1)}$ (resp. ${Z}_{l+1}^{(r+1)}$) coincides with $\Ann(\kappa ; K^{(r)}_{l-1}) \cdot \Downarrow_1 ( \theta^\mu ) $.
\end{cor}
\begin{proof}
	From the isomorphism $\eqref{equ:lambda_l}$ we have 
	\[
		\frac { {Z}_{l}^{(r+1)} /  {Z}_{\infty}^{(r+1)}	} { {Z}_{l+1}^{(r+1)} /  {Z}_{\infty}^{(r+1)}}  \cong   {Z}_{l}^{(r+1)}  / { {Z}_{l+1}^{(r+1)}  }  \cong K_{l}^{(r)} /K_{l-1}^{(r)}
	\]
	which sends $\Downarrow_1 (\theta^\mu)$ to $\kappa$.
	Now the corollary follows immediately.
\end{proof}
Suppose that $ p_1, \ldots , p_s $ are generators of 
$ \Ann(\kappa ; K^{(r)}_{l-1})$, 
and $\tilde{\Lambda} ( \theta^{\mu}_{j,k})$ are 
the generators of $ K^{(r)}_{l-1}$ as in Theorem \ref{thm:generator}. We are able to give a more detailed description for Corollary \ref{cor:ann}.
One can write
\[ p_i \kappa = \sum a_{I} \mh \mathcal{T}_{I} + \sum_{j,k} b_{j,k} \tilde{\Lambda} ( \theta^{\mu}_{j,k})
\]
for homogeneous coefficients $a_{I} $, $b_{j,k} \in \mathcal{P}$. 
We set 
\[
	\Theta_i^\mu :=  p_i \theta^\mu -   \sum_I a_{I} v_{I} \mu^{l} - \sum_{j,k} b_{j,k} \theta^{\mu}_{j,k}\mu^{l-j} . 
\]
Then $\Theta_i^\mu \in \tilde{Z}^{(r+1)}_{l+1} $ and $\Downarrow_1(\Theta_i^\mu) = p_i \cdot \Downarrow_1 (\theta^\mu  )$.  In particular, the elements
\[    \Downarrow_1(\Theta_1^\mu), \ldots, \Downarrow_1(\Theta_s^\mu) 
\]
represent the generators of $ {Z}_{l}^{(r+1)} /  {Z}_{\infty}^{(r+1)} $(resp. $ {Z}_{l}^{(r+1)} $).  
\begin{thm} \label{thm:KZ}
	Let $ K^{(r)} $, $B^{(r)}$, $Z^{(r)} $ be the $\mathcal{P}$-modules defined above associated to weighted-homogeneous polynomial $ f $. Then we obtain the linear isomorphism
	\[
		K^{(r)} / B^{(r)}  \cong Z^{(r+1)} / K^{(r+1)}
	\]
	as $ \mathbb{C} $-vector spaces.
\end{thm} 
\begin{proof}
From the filtrations of $ K^{(r)}$ and $ Z^{(r+1)}$, we have the linear isomorphisms:
\[
K^{(r)} / B^{(r)} \cong   \oplus_{l=1}^{\infty} K_{l}^{(r)}/K_{l-1}^{(r)}  
\]
and
\[
Z^{(r+1)} / K^{(r+1)}  \cong  \oplus_{l=1}^{\infty} Z_{l}^{(r+1)}/Z_{l+1}^{(r+1)}.
\]
Now the theorem follows from the isomorphism \eqref{equ:lambda_l}.
\end{proof}
\begin{cor}\label{cor:H1}
	Consider the new Koszul type complex  $\Kos_*^0(f)$ of $ f $ with respect to the coordinate $ x_1, \ldots , x_n $. If $ H_{r-1}(\Kos_*^0(f)) = 0 $, then 
	\[
		H_{r}(\Kos^0_*(f)) \cong  Z^{(r+1)} / K^{(r+1)}
	\]
	as vector spaces.
	In particular, 
	\begin{equation}\label{equ:H1}
		H_{1}(\Kos^0_*(f)) \cong  Z^{(2)} / K^{(2)}.
	\end{equation}
\end{cor} 
\begin{proof}
	If $ H_{r-1}(\Kos^0_*(f)) = 0 $, then $ K^{(r-1)} = B^{(r-1)}$. It follows from Theorem  \ref{thm:KZ} that 
	$Z^{(r)} = K^{(r)}$.
	Applying Theorem \ref{thm:KZ} once again, we obtain  
	\[
		H_{r}(\Kos^0_*(f))= Z^{(r)} / B^{(r)} = K^{(r)} / B^{(r)} \cong  Z^{(r+1)} / K^{(r+1)}.
	\]
	The isomorphism \eqref{equ:H1} is deduced by the fact $ H_0( \Kos^0_*(f)) = 0 $. 
\end{proof}

\subsection{Proof of Main Theorem \ref{thm:AtMtIntro}}
In this section, we derive a formula for the Hilbert-Poincar\'{e} series of $ \gr J(f)$, which implies that formulas in Main Theorem \ref{thm:AtMtIntro}. 
\begin{notation}
	For $ l \geqslant 1  $, we let $ \mathbb{K}_{l}(t), \mathbb{Z}_{l}(t) , \mathbb{Z}_{\infty}(t), \mathbb{H}_{l}(t) $ be the Hilbert-Poincar\'{e} series of $ K_{l}^{(1)} $,  $ Z_{l}^{(2)} , Z_{\infty}^{(2)} $  and the quotient $ \gr_l(Z^{(2)}) = Z_{l}^{(2)} / Z_{l+1}^{(2)} $ respectively.
\end{notation}
From definition, we have 
\begin{equation}\label{equ:Ht}
	\mathbb{H}_{l}(t) = \mathbb{Z}_{l}(t) - \mathbb{Z}_{l+1} (t).
\end{equation}
Therefore, $\mathbb{H}_{l}(t) \not= 0 $ if and only if $ l $ is a gap number.
Since the homomorphism $ \lambda_l $ in Equation \eqref{equ:lambda_l} is of degree $  l $, we have 
\begin{equation}\label{equ:deltaK}
	\mathbb{K}_{l} - \mathbb{K}_{l-1} = t^{l} ( \mathbb{Z}_{l}(t) - \mathbb{Z}_{l+1} (t))= t^{l} \mathbb{H}_l(t).  
\end{equation}
Suppose that $ L_1 <L_2 ,\cdots < L_k $ are all the gap numbers of $ K(f)$. Then
the filtration for $Z^{(2)} $ has the form 
\[
	Z_{1}^{(2)} = \cdots = Z_{L_1}^{(2)} \supsetneq  Z_{L_2}^{(2)} \supsetneq \cdots  \supsetneq Z_{L_{k-1}}^{(2)} \supsetneq Z_{L_k}^{(2)} \supsetneq Z_{L_k+1}^{(2)} = Z_{\infty}^{(2)} 
\]
and for $1 \leqslant i \leqslant k -1 $,
\[ 
Z_{1+L_i}^{(2)} = \cdots   = Z_{L_{i+1}}^{(2)} . \]
In particular, we obtain 
\begin{equation}\label{equ:Z1t}
	\mathbb{Z}_{1}(t)  = \mathbb{Z}_{\infty}(t)  + \sum_{i=1}^{k} \mathbb{H}_{L_i} (t). 
\end{equation}
The following theorem together with formulas \eqref{equ:Mt} and \eqref{equ:At} implies Main Theorem \ref{thm:AtMtIntro}.
\begin{thm}
	Suppose that $ L_1 < L_2 <\cdots <L_k $ are all the gap numbers of $ K(f) $ (or $Z^{(2)}$). With the notation defined above, we obtain
\begin{equation}\label{equ:Jt}
	\mathbb{J}_f(t) = \frac{1}{(1-t)^n } \left ({n -\sum_{i<j}  t^{m_{i,j}}} \right ) +  \mathbb{Z}_{\infty}(t) + \sum_{i=1}^{k} (1-t^{L_i}) \mathbb{H}_{L_i} (t) .  
\end{equation}
\end{thm}
\begin{proof}
	It follows from the exact sequence
	\[
	0 \to Z^{(2)}_1 \to E^{(2)} \to K_0^{(1)} \to 0 
	\]
	that
	\[
	\mathbb{K}_{0}(t) = 	\frac{1}{(1-t)^n } \left ({\sum_{i<j}  t^{m_{i,j}}} \right ) - \mathbb{Z}_{1}(t). 
	\]
	Combining \eqref{equ:Z1t} we have 
	\[
	\mathbb{K}_{0}(t) = 	\frac{1}{(1-t)^n } \left ({\sum_{i<j}  t^{m_{i,j}}} \right ) - \mathbb{Z}_{\infty}(t)  - \sum_{i=0}^{k} \mathbb{H}_{L_i} (t).
	\]
	Since $L_k$ is the maximal gap number, we have $K_{L_k}^{(1)} = K(f) $. Hence, the Hilbert-Poincar\'{e} series $\mathbb{K}(t)$ of $ K(f)$ coincides with $\mathbb{K}_{L_k}(t)$.
	Combining with the formula \eqref{equ:deltaK}, we find   
		\begin{align}
			\mathbb{K}(t) =& \mathbb{K}_{L_k}(t) \nonumber \\
		=& \mathbb{K}_{0}(t) + \sum_{i=1}^k \left( \mathbb{K}_{L_{i}}(t) -\mathbb{K}_{L_{i-1}}(t)  \right) \nonumber \\
		=& 	\frac{1}{(1-t)^n } \left ({\sum_{i<j}  t^{m_{i,j}}} \right ) - \mathbb{Z}_{\infty}(t)  - \sum_{i=1}^{k} (1-t^{L_i})\mathbb{H}_{L_i} (t).  \label{equ:Ktnew} 
		\end{align}	 
	Now Equation \eqref{equ:Jt} is obtained by combining Equations \eqref{equ:JtKt} and \eqref{equ:Ktnew}. 
\end{proof}
In particular, if $ K(f) $ contains no gap numbers, then 
\begin{equation}\label{equ:Jt_nogap}
	\mathbb{J}_f(t) = \frac{1}{(1-t)^n } \left ({n -\sum_{i<j}  t^{m_{i,j}}} \right ) +  \mathbb{Z}_{\infty}(t) .
\end{equation}
The formula \eqref{equ:Jt} is quite explicit as both series $ \mathbb{H}_{L_i}(t) $ and $\mathbb{Z}_{\infty} (t) $ can be directly computed from the filtration of $ Z^{(2)} $. In the rest of this paper, we demonstrate some explicit calculations.

\section{Low-Dimensional Cases}\label{sec:Low}
\subsection{Two-Dimensional Case}
In this section, we assume $ n = 2 $. We verify the exactness of the modified Koszul complex and obtain explicit formulas for both $ \mathbb{A}(t) $ and $ \mathbb{M}(t) $.  

\begin{notation}
	As usual, we denote by $ f_1, f_2 $ the partial derivatives of the weighted homogeneous polynomial $ f $. Now we have quantities
	$m_1 = \mult(f_1)$ and $m_2 = \mult(f_2)$. The multiplicity of $f $ is given by 
	\[ m_0 =\mult(f) = \min\{m_1,m_2\}+1. \]
\end{notation}
	It follows from Definition \ref{def:Kf} that 
    \begin{align*}
        K(f) & = \langle \mh (a \mathcal{T}_{1,2}) \rangle_{a \in P} = \langle \mh (a) \cdot \mh \mathcal{T}_{1,2} \rangle_{a \in P} \\
        & = \langle \mh \mathcal{T}_{1,2} \rangle = B^{(1)}. 
    \end{align*}
	Therefore, $ K(f)$ has no gaps.
	The complex $ \Kos_*^{0}(f)$ in this case reduces to the exact sequence
\[
\begin{tikzcd}
	0 \ar[r] &  
	\langle \mh \mathcal{T}_{1,2} \rangle  \ar[r,"\delta_2"] & \langle v_1, v_2 \rangle \ar[r,"f_*"] & \gr J(f)   \ar[r] & 0
\end{tikzcd}
\]
In other words, the homology $H_i(\Kos^0_*(f))$ vanishes for $i = 0,1,2$.
	This yields that 
	\[
		Z^{(2)}_{\infty} \subseteq Z^{(2)} = \ker \delta_2 = 0 ,
	\]
	and then $Z^{(2)}_{\infty} = 0$.
	Substituting  
	$	\mathbb{Z}_{\infty}(t) = 0	$ and $ m_{1,2} = m_0-1 $  into Equation \eqref{equ:Jt_nogap}, we obtain the result below.
\begin{thm}
	If $n  =2$, then the Hilbert-Poincar\'{e} polynomial of $ \gr J(f) $ is given by 
	\[
		\mathbb{J}_f(t) = \frac{1}{(1-t)^2} \left( 2 - t^{m_0-1} \right).
   \]
\end{thm}
Since $ \mu_0= \dim A_{0} = \dim M_{0} = m_1 m_2$, we obtain from \eqref{equ:Mt} that 
\[
	\mathbb{M}_f(t) = \frac{m_1 m_2}{(1-t)} + \frac{  2t - t^{m_0} }{(1-t)^3}.
\]
Accordingly, from \eqref{equ:At} we obtain 
\[
	\mathbb{A}_f(t) = 	\frac{m_1 m_2}{(1-t)} + \frac{  2t-t^2 - t^{m_0} }{(1-t)^3}. 
\]
More explicitly, one may calculate the coefficients of the series to obtain the following formulas
\[
	   \tau_k =  \begin{cases}
	\frac{1}{2} k^2+\frac{3}{2}k+ m_1 m_2                                 & \text{for $k \leqslant m_0-1 $;} \\
	mk + m_1 m_2 - \frac{1}{2} (m_0-1)(m_0-2) & \text{for $ k \geqslant m_ 0 $};
\end{cases}
\]
and
\[
	\mu_k =  \begin{cases}
	  k^2+ k+ m_1 m_2                                 & \text{for $k \leqslant m_0 -1 $;} \\
	\frac{1}{2} k^2 + (m_0 -\frac{1}{2})k + m_1 m_2 - \frac{1}{2} (m_0-1)(m_0-2) & \text{for $ k \geqslant m_0 $}.
\end{cases}
\]
\subsection{Three-Dimensional Case}
Let $f $ be a weighted homogeneous singularity of embedding dimension $n = 3$.
From Lemma \ref{lem:levelzero}, we get 
\[
	K^{(1)}_0 = B^{(1)}	 = \langle \mh  \mathcal{T}_{1,2} ,  \mh \mathcal{T}_{1,3}, \mh \mathcal{T}_{2,3} \rangle.
\]
Since $ H_3(\Kos^0_*(f)) = 0 $, we have $ K^{(3)} = Z^{(3)} $.
Theorems \ref{thm:Zinf} and \ref{thm:KZ} yield that  
\begin{equation*}
Z^{(2)}_{\infty} = K^{(2)} = B^{(2)} = \mathcal{P} \cdot \mh\mathcal{T}_{1,2,3} .
\end{equation*}
Therefore, we obtain 	
\begin{equation}
	\label{equ:Zinf}
	\mathbb{Z}_{\infty}(t) = \frac{t^{m_{1,2,3}}}{(1-t)^3}	.
\end{equation}
According to the equation \eqref{equ:H1} in Corollary \ref{cor:H1}, 
\[
H_{1} (\Kos^0_*(f)) \cong  Z^{(2)} / K^{(2)} = Z^{(2)} / B^{(2)} =  H_{2} (\Kos^0_*(f)). \]

As in Notation \ref{not:Theta}, denote by $ q $ the maximal factor of $\mh \mathcal{T}_{1,2,3}$. We know from Theorem \ref{thm:Theta}
that  $ Z^{(2)} = Z^{(2)}_1 $ is generated by  
	$  \Theta_q:= \frac{1}{q} \mh \mathcal{T}_{1,2,3} $. The Hilbert-Poincar\'{e} series of  $Z_{1}(t)$ gives 
	\begin{equation}\label{equ:ZThetaq}
		\mathbb{Z}_{1}(t) = \frac{t^{m_{1,2,3}-\deg(q)}}{(1-t)^3}	.
	\end{equation}
	Theorem \ref{thm:Theta} also implies that $ H_2(\Kos^0_*(f)) \cong \mathcal{P} / q $. Therefore, we obtain the following theorem.
\begin{thm}
	Let $f $ be a weighted homogeneous singularity of embedding dimension $n = 3$.
	If $q $ is the maximal common factor of $ \mh \mathcal{T}_{1,2,3} $, then 
	\[ H_{1} (\Kos^0_*(f)) \cong H_{2}(\Kos^0_*(f)) \cong  \text{underlying $\mathbb{C}$-vector space of  $ \mathcal{P} / q $}, \]
	\[ H_{0} (\Kos^0_*(f)) \cong H_{3} (\Kos^0_*(f)) = 0.
		\]
\end{thm}
\subsubsection{Case: $q$ is a constant}
We first consider the simple case where $ q $ is a constant. Then
\[ H_{2}(\Kos^0_*(f)) = H_{1}(\Kos^0_*(f))  = 0 .\] 
Applying Theorem \ref{thm:St}, we obtain the explicit formula of Hilbert-Poincar\'{e} series 
\begin{equation}\label{equ:constant}
	\mathbb{J}_f(t) = \mathbb{S}_{(m_1,m_2,m_3)} (t).
\end{equation}
Since $K^{(1)} = B^{(1)}$, we know from Lemma \ref{lem:levelzero} that $ K(f) $ has no gap numbers. 

\subsubsection{Case: $q$ is nontrivial}
Now we assume that $ q $ is the maximal nonconstant factor of $\mh \mathcal{T}_{1,2,3}$. 
To investigate the first gap number of $ K(f)$, 
we introduce the following notations:
\begin{enumerate}
	\item Let $ j $ be the maximal integer such that $ \Downarrow_k (\mathcal{T}_{1,2,3}^\mu) $ is divisible  by $ q $ for $1 \leqslant k \leqslant j$;
	\item  $\Theta_q := \frac{1}{q} \Downarrow_1 (\mathcal{T}_{1,2,3}^\mu) $ and $\Theta_q^\mu := \frac{1}{q} \Downarrow_j (\mathcal{T}_{1,2,3}^\mu) $;
	\item $\kappa_q := \tilde{\Lambda}(\Theta_q^\mu)$;
	\item $l_1 := \deg \kappa_q -\deg \Theta_q^\mu = \deg (\kappa_q) + \deg(q) - m_{1,2,3} $.
\end{enumerate}
Following the definition of $ \tilde{\Lambda}$, we write 
\[  \Delta_2(\Theta_q^\mu ) =\kappa_q \mu^{l} \mod \mu^{l+1} \]
for some integer $ l > 0 $. So we obtain $ \Theta_q =\Downarrow_1(\Theta_q^\mu)  \in Z_{l}^{(2)}$ 
 and $ \kappa_q \in K_{l}^{(1)}$. Since $ \Delta_2 $ preserves degrees, we see that 
		\[
			m_{1,2,3} - \deg(q) =\deg (\Theta_q^\mu ) = \deg(\kappa\mu^l ) = \deg(\kappa_q) - l.
		\]
This implies that $l = l_1 $. So $ \Theta_q $ is contained in $ Z^{(2)}_{l_1} $ and therefore $ Z^{(2)}_{l_1} = Z^{(2)}_1 $. 
Let $ L_1 $ be the first gap number of $ K^{(1)}$ (or $ Z^{(2)}$). Then the inclusion $ Z_{L_1+1}^{(2)} \subsetneq Z_{L_1} ^{(2)}= Z_{l_1}^{(2)} = Z_{1}^{(2)}$ implies that $ L_1 \geqslant l_1 $. This gives a lower bound for the series $ \mathbb{J}_f(t) $.

\begin{thm}\label{thm:Lower}
     Using the same notation above, we have 
	\begin{align}
		\mathbb{J}_f(t) \geqslant&  \frac{1}{(1-t)^3} \left( 3  + t^{m_{1,2,3} - \deg(q)} -t^{\deg(\kappa_q)}+ t^{\deg(\kappa_q)+ \deg(q)} - \sum_{i<j}t^{m_{i,j}} \right) \label{equ:Jneq}
		\\
		=& \mathbb{S}_{m_1,m_2,m_3}(t) +  (1-t^{l_1}) (t^{m_{1,2,3}- \deg(q)} - t^{m_{1,2,3}}) \nonumber. 
	\end{align}
	Here and after, the symbol $ \geqslant $ denotes a partial order for polynomials; namely,  
	$ \sum_{i=0}^{N} a_i t^i \geqslant \sum_{i=0}^{N} a_i t^i$
	if and only if $a_i = b_i$ for $i =0 ,\ldots, n$ and $a_{n+1} \geqslant b_{n+1}$.
\end{thm}
\begin{proof}
	Suppose that $ L_1, \ldots , L_k $ are all the gap numbers of $K(f)$.  Notice that $ L_i \geqslant l_1 $ for all $ i $.  
	It follows from Equation \eqref{equ:Jt}, we have 
	\begin{align*}
		\mathbb{J}_f(t) =& \frac{1}{(1-t)^3 } \left ({3 -\sum_{i<j}  t^{m_{i,j}}} \right ) +  \mathbb{Z}_{\infty}(t) + \sum_{i=1}^{k} (1-t^{L_i}) \mathbb{H}_{L_i} (t)   \\
		\geqslant & \frac{1}{(1-t)^3 } \left ({3 -\sum_{i<j}  t^{m_{i,j}}} \right ) +  \mathbb{Z}_{\infty}(t) + (1-t^{l_1}) \sum_{i=1}^{k}  \mathbb{H}_{L_i} (t) \\
		= & \frac{1}{(1-t)^3 } \left ({3 -\sum_{i<j}  t^{m_{i,j}}}  \right ) +  \mathbb{Z}_{\infty}(t) + (1-t^{l_1})   ( \mathbb{Z}_{1} (t) - \mathbb{Z}_{\infty} (t)),
	\end{align*}
	where the last equality deduced from Equation \eqref{equ:Z1t}.
	Substituting the equalities \eqref{equ:Zinf},\eqref{equ:ZThetaq}, we complete the proof.
\end{proof}

Notice that the equality in \eqref{equ:Jneq} can be achieved if and only if $ l_1 $ is the unique gap number of $ K(f) $.  In this case, we have $\lev(\kappa_q) = l_1 $, and 
\[  K(f) = K^{(1)}_{l_1} = \left \langle \mh \mathcal{T}_{1,2} , \mh \mathcal{T}_{1,3}\mh \mathcal{T}_{2,3}, \kappa_q \right \rangle . \]
It follows that 
\[
	Z_{l_1+1}^{(2)}  = Z_{\infty}^{(2)}= q \Theta_q \cdot \mathcal{P} . 
\]
Then the induced isomorphism 
\[
	Z^{(2)} /  Z_{\infty}^{(2)} = Z_{l_1}^{(2)} /  Z_{l_1+1}^{(2)} \to K_{l_1}^{(1)} / K_{l_1-1}^{(1)} =  K(f) / K_0^{(1)} 
\]
sends $\Theta_q$ to $\kappa_q$.
In this situation, Corollary \ref{cor:ann} yields the sufficient and necessary condition:
\begin{equation} \label{equ:Annq}
	\Ann (\kappa_q ; K_{0}^{(1)} ) = q \cdot \mathcal{P} 
\end{equation}


\section{Applications: Three-Dimensional Weighted Homogeneous Singularities}\label{sec:Applications}

It is a natural and important question to characterize the homogeneous polynomials with an isolated critical point at the origin. This question has remained open for 40 years. In fact, it is the first important case of the following interesting problem. Let $X$ be a smooth projective variety in $\mathbb{CP}^{n-1} $. Then the affine cone over $X  $ in $ \mathbb{C}^n $  is an affine variety $V(f)$ for some polynomial $f $ with an isolated singularity at the origin. We naturally ask: when an affine variety with an isolated singularity at the origin is the affine cone over smooth projective variety?

Orlik and Wagreich \cite{orlik1971singularities}, together with Arnold \cite{arnol1974normal}, demonstrated that any weighted homogeneous polynomial  $ f $
  whose zero locus $V(f) $ has an isolated singularity at the origin can be deformed into one of seven canonical classes (listed in the next subsection), while preserving the differential structure of the link  $  S^5 \cap V(f) $. Later, Yau and Yu \cite{yau2005classification} extended this work by classifying three-dimensional isolated rational hypersurface singularities with $\mathbb{C}^*$-action.
\subsection{Main Results}
\begin{notation}
	We make use the following notations. Let $ a,b,c , d,e $ be non-negative integers. Set $ \sigma := d+e-1 $.
	For a subset $ I \subseteq \{a,b,c,d,e, \sigma \}$,  denote by $\underline{I}$ the minimal value of $I$. For instance, $ \underline{ab} = \min \{ a,b\} $. 
\end{notation}
We consider the hypersurface singularities defined by the following seven polynomials:
\begin{align*}
	f^{(1)} :=& x^{a+1} + y^{b+1} + z^{c+1} , \text{where $ \underline{abc} \geqslant 1 $;} \\
	f^{(2)} :=& x^{a+1} + y^{b+1} + z^c y , \text{where $ \underline{abc} \geqslant 1 $;} \\
	f^{(3)}:=& x^{a+1} + y^{b} z  + z^c y, \text{ where  $ a \geqslant 1 $ and $ 2 \leqslant b \leqslant c$};  \\
	f^{(4)}:=& x^{a+1} + y^b z + z^c x,\text{where $ \underline{abc} \geqslant 1 $}; \\
	f^{(5)}:=& x^a y + y^b z +z^c x,\text{ where  $ 1\leqslant a \leqslant \underline{bc} $ and $ \underline{bc} \geqslant 2$}; \\
	f^{(6)}:=& x^{a+1} + x y^b + x z^c + y^{d} z^{e},\text{ where }  1 \leqslant b \leqslant c, a (be + cd )= (a+1)bc;\\ 
	f^{(7)} :=& x^a y + x y^b + xz^{c}+ y^{d} z^{e},  \text{ where }  (a-1) (be + cd )= (ab-1)c, \underline{ab}\geqslant 2.
\end{align*}

\begin{thm}\label{thm:homology}
	The homology groups of $ \Kos_*^{0}(f^{(1)}) $ always vanish. For $ i = 2, \ldots, 7 $, the homology groups of $ \Kos_*^{0}(f^{(i)}) $ are nontrivial if and only if the coefficients of $f^{(i)}$ verify the corresponding conditions listed below. In addition,  
	\[ H_1(\Kos_*^{0}(f^{(i)}) ) \cong H_2(\Kos_*^{0}(f^{(i)}) ) \cong \mathcal{P} / q, \] 
	where $q$ is the factor listed in each case. \newline
Case $f^{(2)}$:
		\begin{enumerate} 
			\item $ b < c < a$, where $q = y $;
			\item $ 2 \leqslant c <\underline{ab} $, where $q = z^{c-1}$.
		\end{enumerate}
Case $f^{(3)}$:
		\begin{enumerate} 
			\item $ 2 \leqslant b < \underline{ac} $, where $q = y^{b-1} $. 
		\end{enumerate}
Case $f^{(4)}$:
		\begin{enumerate} 
			\item	$2 \leqslant b < \underline{ac} $, where $ q = y^{b-1} $; 
			\item  $ a < c <b $, where $q = x $;
			\item $ 2 \leqslant c< \underline{ab} $, where $ q = z^{c-1} $.  
		\end{enumerate}
Case $f^{(5)}$:
		\begin{enumerate} 
			\item $ 2 \leqslant a < \underline{bc} $, where $ q = x^{a-1}$.
		\end{enumerate}
Case $f^{(6)}$:
		\begin{enumerate} 
			\item $ a < b <  \sigma $, where $q =x $; 
		\item $ d\geqslant 2, e \geqslant 1$, $  b = \sigma < \underline{ac} $, where $ q = y^{d-1} $;
		\item $ d \geqslant 0, e \geqslant 1$, $ 2 \leqslant  b < \underline{ a c\sigma} $, where $ q = y^{b-1 } $.
		\end{enumerate}
Case $f^{(7)}$:
		\begin{enumerate}
			\item $a < \underline{bc} $, where $q = x^{a-1}$;
			\item $  \underline{de \sigma} \geqslant 1  $, $ \sigma < b < \underline{ac} $,  where $ q = y^{d-1} z^{e-1} $;
			\item $ d\geqslant 2$, $ e \geqslant 1$, $ \sigma = b < \underline{ac} $, where $ q = y^{d-1}   $;
			\item  $ d\geqslant 0$, $ e \geqslant 1$, $ 2 \leqslant b < \underline{ac\sigma} $,  where $ q = y^{b-1}   $;
			\item $  \underline{de \sigma} \geqslant 1  $, $ \sigma < c < \underline{ab} $,  where $ q = y^{d-1} z^{e-1}  $;
			\item $ d\geqslant 1$, $ e \geqslant 2$, 
			$ \sigma = c < \underline{ab} $, where $ q =z^{e-1}  $;
			\item $ d\geqslant 1$, $ e \geqslant 0$, $ 2 \leqslant c < \underline{ab\sigma} $, where $ q =  z^{c-1}  $.
		\end{enumerate}

\end{thm}

\begin{notation}
	Denote the symbols:
\begin{align*}
	  \tilde{a} &= \min \{ 2 a, b+c\},
	 & \tilde{b} &= \min \{ 2 b, a+c\} ,\\
	  \tilde{c} &= \min \{ 2c, a+b \} , 
	 & \tilde{\sigma} &=  \min \{ 2 \sigma  , c + b \} \\
	  \tilde{c}_2 &= \min \{ \sigma +a, 2c \},
	 & \tilde{b}_2 &= \min \{ 2b,\sigma+a \},\\
	  \tilde{b}_3 &= \min \{ 2b, \sigma +a , c+ a \} ,
	 & \tilde{\sigma}_3  &= \min \{ a+b , c + \sigma  \}.
\end{align*} 
\end{notation}
\begin{thm}\label{thm:Main}
	The Hilbert-Poincar\'{e} series $\mathbb{J}_{f^{(i)}}(t)$ of $ \gr J(f^{(i)}) $ is given by 
	\[
		\mathbb{J}_{f^{(i)}}(t) = \frac{3 + \mathbb{L}_i(t)}{(1-t)^3},
	\]
	where $ \mathbb{L}_i(t) $'s are the polynomials listed as follows.
\end{thm}
\[ \mathbb{L}_1(t)= -t^{\underline{ab}} - t^{\underline{ac}}- t^{\underline{bc}} + t^{\underline{ab}+\underline{ac}+\underline{bc}-\underline{abc} } .\]
\[
	\mathbb{L}_2(t)= \begin{cases} - 2 t^{b} -t^{c}   + t^{b+c} +t^{b-1+c}-t^{b+c} -t^{ \tilde{c} -1} + t^{ \tilde{c} }  & \text{for $ b \leqslant c < a$;}\\
		-3 t^c + t^{2c} +   t^{1+c}-t^{2c}   -t^{\underline{ab}+1} +t^{\underline{ab}+c}   & \text{for $  c <\underline{ab}  $;}\\ 
		-t^{\underline{ab} } -t^{a} -t^{\underline{bc}} + t^{\underline{bc}+a}     & \text{for $  a \leqslant c $.}
	\end{cases}
	\]
\[
	\mathbb{L}_3(t) =  \begin{cases} -3 t^{b} + t^{b+1} -t^{\underline{ac}+1}+t^{\underline{ac}+b-1} 	-t^{\tilde{c}-1} +t^{\tilde{c}}    & \text{for $ 2 \leqslant b <a $;}\\
		-2 t^a -t^b+ t^{a+b}  & \text{for $ b \geqslant a $} .
	\end{cases}	
	\]
	\[
	\mathbb{L}_4(t) =  \begin{cases} 
		-2 t^a-t^{\underline{bc}} -t^{a+ \underline{bc}} +t^{a-1+c} - t^{a+c} - t^{\tilde{c}+c-\underline{bc}-1} + t^{\tilde{c}+c- \underline{bc}}    & \text{for $a \leqslant \underline{bc} $;}\\
		-3 t^b + t^{b+1} - t^{\underline{ac}+1} + t^{\underline{ac}+b}   & \text{for $b \leqslant \underline{ac}$;} \\
		- 3t^c+t^{c+1} -t^{\underline{ab}}(t - 2 t^{c}+t^{c+1} ) -t^{\tilde{b}}+t^{\tilde{b}+1}  & \text{for $ 2 \leqslant c \leqslant \underline{ab} $;} \\
		- 3t+t^{2}  & \text{for $ c = 1 $}.  
	\end{cases}	
	\]
	\[
	\mathbb{L}_5(t) =  \begin{cases} 
		- 3 t^a  +t^{a+1}  -t^{\underline{bc}}  (t - 2 t^{a} + t^{a+1} ) -t^{\tilde{c}} (1 - t)   & \text{for $ 2 \leqslant a \leqslant \underline{bc}$}; \\
		- 3t + t^2  & \text{for $a = 1 $}.
	\end{cases}	
	\] 
	 \[
		\mathbb{L}_6(t) = \begin{cases}
			-2t^a-t^b+t^{a+b-1}-t^{\min \{ 2b, \sigma +a \}}(t^{-1}-1)  & \text{for $ a \leqslant b <  \sigma $;} \\
			-3t^b+t^{2b} +(1-t^{\underline{ac}-b})(t^{b+e}-t^{2b})   & \text{for $ b = \sigma $ }; \\
			\substack{-3 t^b + t^{b+1}   -t^{\underline{ac\sigma} }(t - 2t^{b-d+1} + t^{b-d+2})-t^{\underline{ac}}(t^{2-d+1} -t^{b}) \\ -t^{ \min\{2 \sigma -b,a,c \} } (t^{b-d+1} - t^{b-d+2} )   } & \text{for $ 2 \leqslant b <  \underline{acd\sigma} $;} \\
			-3 t^b + t^{b+1}  -t^{\underline{ac}} (t  -t^{b}) & \text{for $ b = 1 $ or $b = d < \underline{ac \sigma}  $;} \\
			-t^a -t^{\underline{b\sigma}}-t^{\underline{c\sigma}} + t^{a+\underline{b\sigma}}  & \text{for $ a = b $ or $ b\geqslant \sigma, a+1 $}. 
		\end{cases}
	\]

	The precise expression of $ \mathbb{L}_7(t) $ is far more complicated. We split it into following cases:

	Case $ b< \underline{ac\sigma}$:
	$ \mathbb{L}_7(t) = - 3t^{b} + t^{2b} + \mathbb{B}(t) 
	$, where $ \mathbb{B}(t)$ equals  
	\[
		\begin{cases}
			(1-t^{a-b})(t^{b+1} - t^{2b-1}) + (1-t^{\tilde{a}-2b})(t^{2b-1} - t^{2b} ) & \text{for  $ d = 0 , b $;} \\
			 (1-t^{\sigma-b})(t^{b+1}-2t^{2b}+t^{2b+1})+(t^{2b}-t^{\tilde{\sigma}})(1 - t) & \text{for $ d = 1 $}; \\
			  (1-t^{a-b})(t^{b+1}-t^{2b-1}) +(1-t^{\tilde a -2 b})(t^{2b-1} - t^{2b})  & \text{for $ 2 \leqslant d \leqslant b-1 $,  and $ a \leqslant \sigma $.}
		\end{cases}
	\]
	For the case $ 2 \leqslant d \leqslant b-1 $,  and $ a > \sigma $, we only know that
	\[ 
	\mathbb{B}(t) \geqslant  (1-t^{\sigma-b})(t^{b+1} - 2t^{b-d}+t^{b-d+1}) +(t^b-t^{\min \{ 2 \sigma - b ,a  ,c  \}})(2t^{-d} - t^{-d+1} - t^{b}) .
	\]

	Case $ b \geqslant \underline{ac\sigma}$: In this case, $\mathbb{L}_7(t) $ is given by  
	\[
		\begin{cases}
			-3t^a + t^{a+1} -t^{c} (t - 2t^{a}+ t^{a+1})  -t^{\tilde{c}_2} (1-t)  & \text{for $a < c \leqslant b$}; \\
			-3t^a + t^{a+1}  -t^{b} ( t - t^{a-1} ) -t^{\tilde{b}_3} ( t^{-1}-2  + t ) -t^{\tilde{b}_2} ( 1-t)    & \text{for $a < b < c$}; \\
			-3t^\sigma + t^{ \sigma+1}  -t^{b} (t -t^{e})   -t^{\underline{ac}} (t^{e } - 2t^{\sigma} + t^{\sigma+1}) -t^{\tilde{\sigma}_3 } (1 -t ) & \text{for $\sigma < b < \underline{ac}$};\\
			- 3t^{b} + t^{b+e} -t^{\underline{ac}}(t^{e} - t^{b}) & \text{for $ \sigma =  b < \underline{ac}$;} \\
			- 3t^\sigma + 3 t^{2 \sigma } + (1- t^{c- b})(t^{\sigma+1 - t^{\sigma+d}}) +(1-t^{\underline{ab} - \sigma}) (t^{\sigma +d } - t^{2\sigma})  & \text{for $ \underline{de \sigma } \geqslant 1$, $ \sigma< c < \underline{ab} $;} \\
			- 3 t^c + t^{2c} +(1 - t^{\underline{ab} -c })(t^{c+d} - t^{2c}) & \text{for $ d \geqslant 1 , e \geqslant 2$,}\\
			 & \text{ and $\sigma = c < \underline{ab} $.}
		\end{cases}
\]
When $ d \geqslant 1 $, $2 \leqslant c < \underline{ab\sigma}$, there exists an inequality
\[
	\mathbb{L}_7(t) \geqslant  \frac{1}{(1-t)^3} \left( 3 - 3 t^c + t^{2c} +(1 - t^{a -c })(t^{c+1} - t^{2c}) \right) 	
\]
and the equality holds for $ e = 0 $ or $c $.

Applying Theorem \ref{thm:Main} and Equations \eqref{equ:Mt} and \eqref{equ:At}, we obtain Main Theorem \ref{thm:seriesIntro}. 

\subsection{Proof of the Case $f= f^{(5)} $}
Since the complete proof of Theorem \ref{thm:Main} is lengthy, we demonstrate the case when $f= f^{(5)} $. For convenience, we use the symbol
	\[ 
		\Xi_{a}^{b} = \begin{cases}
			1 & \text{ when $ a \leqslant b $};\\
			0 & \text{ when $ a > b $}.\\
		\end{cases}
	\]  
\subsubsection{Homology}
By definition, we have 
	\[
	f  = x^{a} y + y^{b} z +z^{c} x .	
	\]
	Without loss of generality, we assume that $ a \leqslant b $ and $ a \leqslant c $ and $\underline{bc} \geqslant 2$. 
	The element $\mathcal{T}_{1,2,3} \in Z_1^{(2)}$ is defined as 
	\[  \mathcal{T}_{1,2,3} =  (a x^{a-1} y + z^{c}) v_{2,3 } -  (x^{a} + b y^{b-1} z   ) v_{1,3} + ( y^{b} + c z^{c-1} x )v_{1,2}.
\]
	\begin{prop}
		The element $\mh \mathcal{T}_{1,2,3}$ is divisible if and only if 
		$ 2 \leqslant a < \underline{bc}$. In this case, the maximal factor of $ \mh \mathcal{T}_{1,2,3} $ equals $q = x^{a-1} $. 
	\end{prop}
	\begin{proof}
		Case $ a = c $. 
		Then 
		\[  \mh \mathcal{T}_{1,2,3} =  (a x^{a-1} y + z^{a}) v_{2,3 } -  (x^{a} + \Xi_{a}^{b} b y^{b-1} z   ) v_{1,3} + ( \Xi_{b}^{a} y^{b} + c z^{a-1} x )v_{1,2}.
		\]
		Therefore, $ \mh \mathcal{T}_{1,2,3} $ is not divisible. 

		Case $ a = b $.
Then 
		\[  \mh \mathcal{T}_{1,2,3} = (a x^{a-1} y + \Xi_{c}^a z^{c}) v_{2,3 } -  (x^{a} + a y^{a-1} z   ) v_{1,3} + ( y^{a} + \Xi_{c}^a c z^{c-1} x )v_{1,2}.
		\]
		In this case $ \mh \mathcal{T}_{1,2,3} $ is also not divisible.

		Case $ a <  \underline{bc}$.
		Observe that 
		\[  \mh \mathcal{T}_{1,2,3} =  (a x^{a-1} y ) v_{2,3 } -  x^{a}    v_{1,3}  
		\]
		is divisible by $q = x^{a-1} $. 
		
	\end{proof}

	\subsubsection{Case $2 \leqslant a< \underline{bc}$}
	Suppose that $2 \leqslant  a < \underline{bc}$. 
	Recall 
	\begin{align*}
		\mathcal{T}_{1,2}^{\mu} &= (ax^{a-1} y  v_2 -x^a v_1 )  - (b y^{b-1} z   v_1) \mu^{b-a} + (z^c v_2) \mu^{c-a} 	\\
		\mathcal{T}_{1, 3 }^{\mu}   &=  ax ^{a-1} y v_3  + (z^c v_3  - c z^{c-1} x v_1 ) \mu^{c-a} - ( y^b v_1 ) \mu^{b-a}  \\
		\mathcal{T} _{2, 3 }^{\mu} &=  x^a  v_3+ (b y^{b-1} z v_3 - y^b v_2 ) \mu^{b-a}   - (c z^{c-1} x )v_2	\mu^{c-a}.
	\end{align*}
	Now we have 
	\[
		\Theta_q = a y v_{2,3} - x v_{1,3}  
	\]
	and 
	$ q = x^{a -1 } $. Applying the differential $ \Delta_2 $, we get 
	\begin{equation}\label{equ:Delta}
	\begin{aligned}
			\Delta_2 (\Theta_q ) = & 
		   -x  \mathcal{T}_{1, 3 }^{\mu}  + a y \mathcal{T} _{2, 3 }^{\mu}  \\ 
		 =&   - (x z^c v_3  - c z^{c-1} x^2 v_1 ) \mu^{c-a} + ( x y^b v_1 ) \mu^{b-a } \\  
		 &+ ay \left( (b y^{b-1} z v_3 - y^b v_2 ) \mu^{b-a}   + (c z^{c-1} x )v_2	\mu^{c-a} \right)\\  
		 = & y^b ( x  v_1 - a y  v_2 + a b  z v_3 ) \mu^{b-a}  - z^{c-1} x ( - c x v_1  + a c y v_2 + z v_3 ) \mu^{c-a}.  
	  \end{aligned}
	\end{equation}
  It follows from the definitions that 
  \[ 
	Z_{\infty} =  x^{a-1} \Theta_q \cdot \mathcal{P}
\]
and 
\[
K_0^{(1)} = \langle x^{a-1}  (a y  v_2 -x v_1) , ax ^{a-1} y v_3,  x^a  v_3 \rangle .
\]

\subsubsection{Case: $a < b \leqslant c$}
From the expression \eqref{equ:Delta} of $ \Delta_{2}(\Theta_q) $, we have 
\begin{align*}
	\tilde{\Lambda} (\Theta_q ) =   &  y^b ( x  v_1 - a y  v_2 + a b  z v_3 )  - \Xi_{c}^{b} z^{c-1} x ( - c x v_1  + a c y v_2 + z v_3 )  .
\end{align*}
It can be checked that 
	 \[
  		\Ann ( \tilde{\Lambda} (\Theta_q ), K^{(1)}_0 ) = x^{a-1} \cdot  \mathcal{P}. 
	\] 
  Therefore, the equality of Theorem \ref{thm:Lower} holds, i.e., 
  \[
		\mathbb{J}_f(t) = \frac{1}{(1-t)^3} \left(  3 -3 t ^a + t^{2a } + (1-t^{b-a}) (t^{a+1} -t^{2 a } )   \right)  	.  
  \]
  Notice that $ l_1 = b -a $ is the unique gap number. 
\subsubsection{Case: $ a < c < b $}
\begin{lem}\label{lem:computation}
	Assume that $ a < c < b $, $ L_1:= c-a$ and $ L_2 := \min\{ b-a, 2(c-a) \} $. Then we obtain the filtration
	\begin{align*}
		Z_1 =& Z_{2}= \cdots = Z_{L_1} =  \Theta_q \cdot \mathcal{P}	 \\
		\subsetneq &  Z_{L_1+1} = \cdots = Z_{L_2} =   
		\langle x^{a-1} \Theta_q , x^{a-2} y \Theta_q \rangle  \\
		\subsetneq &  Z_{L_2+1} = \cdots = Z_{\infty} = x^{a-1}\Theta_q \cdot \mathcal{P}	
	\end{align*}
\end{lem}
	 \begin{proof}
		For this case we have $ L_1 = c-a $ and then 
		\[
			\tilde{\Lambda} (\Theta_q) = 	-  z^{c-1} x ( - c x v_1  + a c y v_2 + z v_3 ) , 
		\]
		  \[
		\Ann(\tilde{\Lambda} (\Theta_q); K^{(1)}_{0}) = \langle x^{a-1} , y x^{a-2} \rangle ,   
		  \]
		  and 
		  \[
			  Z^{(2)}_{L_1+1} = y x^{a-2}  \Theta_q \cdot \mathcal{P} \mod Z^{(2)}_{\infty}.
		  \]
		  The relations of $\tilde{\Lambda} (\Theta_q) $ and $ K_0^{(1)} $ are generated by
		  \begin{align*}
			a y x^{a-2} \tilde{\Lambda} (\Theta_q) + z^{c} \mh (\mathcal{T}_{1,3}) + ac z^{c-1} y \mh (\mathcal{T}_{1,2})  &= 0.
		 \end{align*}
		 We define 
		 \[
		 \Theta' = a y x^{a-2}  \Theta_q + \mu^{c-a} (z^{c} v_{1,3} + ac z^{c-1} y v_{1,2})
		\]
		 Therefore, $\Theta' \in \tilde{Z}_{L_1+1}^{(2)}$ such that $\Downarrow_1\Theta'$ is the generator of $Z_{L_1+1}^{(2)}/Z_{\infty}^{(2)}$.
		   
		 Now we consider 
		 \begin{align*}
			\Delta_2 (\Theta') :=  &  a x^{a-2} y^{b+1} ( x  v_1 - a y  v_2 + a b  z v_3 )\mu^{b-a}  \\
			&-  z^{2c-1}   ( c x v_1 - ac y v_2 - z v_3   ) \mu^{2c-2a} - (1+abc) y^b z^c v_1  \mu^{b+c-2a}    .
		 \end{align*}	 
	We have $\Theta' \in \tilde{Z}_{L_1+1}^{(2)}$ where $L_2 = \min \{ 2c -2a , b-a \}$.  It yields that 
	\begin{align*}
		\tilde{\Lambda} (\Theta') = &\Xi_{2c}^{b+a} \cdot a x^{a-2} y^{b+1} ( x  v_1 - a y  v_2 + a b  z v_3 ) \\
		&+ \Xi_{b+a}{2c}  \cdot z^{2c-1}   ( -c x v_1 + ac y v_2 + z v_3   ).
	\end{align*}
	Notice that 
	\[
K_{L_1}^{(1)} = \langle x^{a-1}  (a y  v_2 -x v_1) , ax ^{a-1} y v_3,  x^a  v_3,   z^{c-1} x (  c x v_1  - a c y v_2 - z v_3 )  
 \rangle .
\]
	We have
	\[
	\Ann(\tilde{\Lambda} (\Theta') ; K^{(1)}_{L_1} ) = x. 
	\]
	 Thus,
	 \[
		Z_{L_2+1} / Z_{\infty} = [x \cdot \Theta'] = 0. 	 
	 \] 
	 That is 
	 \[
		Z_{L_2+1} = Z_{\infty} . 	 
	 \]
\end{proof}
It follows from Lemma \ref{lem:computation} that 
\begin{align*}
	\mathbb{H}^{L_1} =& \frac{1}{(1-t)^3} \left( t^{a+1} - 2 t^{2a} + t^{2a+1} \right) , \\
	\mathbb{H}^{L_2} =&  \frac{1}{(1-t)^3} \left(  t^{2a} - t^{2a+1}  \right) , \\
	\mathbb{Z}^{\infty} =& \frac{t^{2a}}{(1-t)^3}    .
\end{align*}
Substituting these into the formula \eqref{equ:Jt}, we obtain
\begin{align*}
	\mathbb{J}_f(t)  =&  \frac{1}{(1-t)^3} \left( 3 - 3t^a + t^{2a}+ (1-t^{L_1}) (t^{a+1} - 2 t^{2a} + t^{2a+1})  + (1-t^{L_2})  (  t^{2a} - t^{2a+1})  \right)\\
	= & \frac{1}{(1-t)^3} ( 3 + \mathbb{L}_{5}(t) ).
\end{align*}

This completes the proof for $\mathbb{L}_5(t)$.

\bibliographystyle{amsplain}
\bibliography{papers}

\end{document}